\UseRawInputEncoding

\documentclass[12pt, reqno]{amsart}
\usepackage{amssymb}
\usepackage{eucal}
\usepackage{amsmath}
\usepackage{amscd}
\usepackage[dvips]{color}
\usepackage{mathtools}
\usepackage{multicol}
\usepackage[all]{xy}           
\usepackage{graphicx}
\usepackage{color}
\usepackage{colordvi}
\usepackage{xspace}
\usepackage{ulem}
\usepackage{cancel}
\usepackage{tikz}
\usepackage{longtable}
\usepackage{multirow}
\usepackage{ifpdf}
\ifpdf
\usepackage[colorlinks,final,backref=page,hyperindex]{hyperref}
\else
\usepackage[colorlinks,final,backref=page,hyperindex,hypertex]{hyperref}
\fi


\topmargin -.8cm \textheight 22.8cm \oddsidemargin 0cm \evensidemargin -0cm \textwidth 16.3cm

\begin{document}
	\newcommand {\emptycomment}[1]{} 
	
	\newcommand{\tabincell}[2]{\begin{tabular}{@{}#1@{}}#2\end{tabular}}

	\newcommand{\nc}{\newcommand}
	\newcommand{\delete}[1]{}

		\nc{\mlabel}[1]{\label{#1}}  
		\nc{\mcite}[1]{\cite{#1}}  
		\nc{\mref}[1]{\ref{#1}}  
		\nc{\meqref}[1]{Eq.~\eqref{#1}} 
		\nc{\mbibitem}[1]{\bibitem{#1}} 
	
	\delete{
		\nc{\mlabel}[1]{\label{#1}  
			{\hfill \hspace{1cm}{\bf{{\ }\hfill(#1)}}}}
		\nc{\mcite}[1]{\cite{#1}{{\bf{{\ }(#1)}}}}  
		\nc{\mref}[1]{\ref{#1}{{\bf{{\ }(#1)}}}}  
		\nc{\meqref}[1]{Eq.~\eqref{#1}{{\bf{{\ }(#1)}}}} 
		\nc{\mbibitem}[1]{\bibitem[\bf #1]{#1}} 
	}

	\newtheorem{thm}{Theorem}[section]
	\newtheorem{lem}[thm]{Lemma}
	\newtheorem{cor}[thm]{Corollary}
	\newtheorem{pro}[thm]{Proposition}
	\newtheorem{conj}[thm]{Conjecture}
	\theoremstyle{definition}
	\newtheorem{defi}[thm]{Definition}
	\newtheorem{ex}[thm]{Example}
	\newtheorem{rmk}[thm]{Remark}
	\newtheorem{pdef}[thm]{Proposition-Definition}
	\newtheorem{condition}[thm]{Condition}
	\newtheorem{question}[thm]{Question}

	\renewcommand{\labelenumi}{{\rm(\alph{enumi})}}
	\renewcommand{\theenumi}{\alph{enumi}}
	\renewcommand{\labelenumii}{{\rm(\roman{enumii})}}
	\renewcommand{\theenumii}{\roman{enumii}}

	\nc{\tred}[1]{\textcolor{red}{#1}}
	\nc{\tblue}[1]{\textcolor{blue}{#1}}
	\nc{\tgreen}[1]{\textcolor{green}{#1}}
	\nc{\tpurple}[1]{\textcolor{purple}{#1}}
	\nc{\btred}[1]{\textcolor{red}{\bf #1}}
	\nc{\btblue}[1]{\textcolor{blue}{\bf #1}}
	\nc{\btgreen}[1]{\textcolor{green}{\bf #1}}
	\nc{\btpurple}[1]{\textcolor{purple}{\bf #1}}

	
	\newcommand{\End}{\text{End}}

	\nc{\calb}{\mathcal{B}}
	\nc{\call}{\mathcal{L}}
	\nc{\calo}{\mathcal{O}}
	\nc{\frakg}{\mathfrak{g}}
	\nc{\frakh}{\mathfrak{h}}
	\nc{\ad}{\mathrm{ad}}
	\def \gl {\mathfrak{gl}}
	\def \g {\mathfrak{g}}

	\nc{\ccred}[1]{\tred{\textcircled{#1}}}
	
	
	\newcommand{\cm}[1]{\textcolor{purple}{\underline{CM:}#1 }}

	\newcommand\blfootnote[1]{%
		\begingroup
		\renewcommand\thefootnote{}\footnote{#1}%
		\addtocounter{footnote}{-1}%
		\endgroup
	}

	
	\title[Matched pairs and double construction bialgebras]
	{Matched pairs and double construction bialgebras of (transposed) Poisson 3-Lie algebras}

	\author{Kecheng Zhou}
	\address{
		School of Mathematical Sciences  \\
		Zhejiang Normal University\\
		Jinhua 321004 \\
		China}
	\email{zhoukc@zjnu.edu.cn}

	\author{Chuangchuang Kang}
	\address{
		School of Mathematical Sciences  \\
		Zhejiang Normal University\\
		Jinhua 321004 \\
		China}
	\email{kangcc@zjnu.edu.cn}
	\author{Jiafeng L\"u}
	\address{
		School of Mathematical Sciences  \\
		Zhejiang Normal University\\
		Jinhua 321004 \\
		China}
	\email{jiafenglv@zjnu.edu.cn}
	
	\blfootnote{*Corresponding Author:\ Chuangchuang\ Kang. Email:\ kangcc@zjnu.edu.cn\ }

	\begin{abstract}
		
		Double construction bialgebras for Poisson 3-Lie algebras and transposed Poisson 3-Lie algebras are defined and studied using matched pairs. Poisson 3-Lie algebras and transposed Poisson 3-Lie algebras are constructed on direct sums and tensor products of vector spaces. Matched pairs, Manin triples, and double construction Poisson 3-Lie bialgebras are shown to be equivalent.
		Since the double construction approach does not apply to bialgebra theory for transposed Poisson 3-Lie algebras, admissible transposed Poisson 3-Lie algebras are introduced. These algebras are both transposed Poisson 3-Lie algebras and Poisson 3-Lie algebras. An equivalence between matched pairs and double construction admissible transposed Poisson 3-Lie bialgebras is established.
		
	\end{abstract}

	\subjclass[2020]{
		17A40, 
		16T10,
		17A30, 
		17A42, 
		17B10, 
		17B63. 
	}

	\keywords{ Poisson 3-Lie algebras, transposed Poisson 3-Lie algebras, representations, matched pairs, manin triples, double construction bialgebras}
	
	\maketitle
	
	
	\tableofcontents
	
	\allowdisplaybreaks
	
	\section{Introduction}
	The aim of this paper is to construct matched pairs and double construction bialgebras for Poisson 3-Lie algebras and transposed Poisson 3-Lie algebras, motivated by the question \cite[Question 35]{Beites-1}: define and study transposed Poisson bialgebras.
	\subsection{ (Transposed) Poisson algebras}
	Poisson algebras originated from the study of Hamiltonian mechanics and are connected to various fields, including   Poisson manifolds \cite{K.Bhaskara}, quantum groups \cite{Chari}, operads \cite{Goze}, quantization theory \cite{Kontsevich}, quantum mechanics \cite{Odzijewicz} and algebraic geometry \cite{Polishchuk}. 
	\begin{defi}
		A Poisson algebra is a triple $(A,\cdot,[\cdot,\cdot])$, where  $(A,\cdot)$ is a commutative associative algebra and $(A, [\cdot,\cdot ])$ is a Lie algebra satisfying the Leibniz rule:  
		\begin{equation*}\label{eq:poisson-alg-1}  
			[x\cdot y,z]=x\cdot[ y,z]+ y\cdot[x,z], \quad \forall~ x,y,z\in A.
		\end{equation*}  
	\end{defi}
	Bai, Bai, Guo, and Wu \cite{Bai2020} introduced the concept of a transposed Poisson algebra. This is achieved by exchanging the roles of the two binary operations in the Leibniz rule, resulting in the following transposed Leibniz rule:  
	\begin{equation*}  
		2x \cdot [y, z] = [x \cdot y, z] + [y, x \cdot z], \quad \forall~ x, y, z \in A.
	\end{equation*}
	The transposed Poisson algebra shares some properties with the Poisson algebra, such as  being constructed from a commutative algebra with two commuting derivations. However, they differ significantly. For example, the Leibniz rule implies that adjoint operators of the Lie algebra are derivations of the commutative associative algebra, while the transposed Leibniz rule shows that left multiplication in the commutative associative algebra is a $\frac{1}{2}$-derivation of the Lie algebra \cite{Ferreira}. 
	
	In recent years, transposed Poisson algebras have attracted significant attention.
	Beites, Ouaridi and Kaygorodov classified all complex 3-dimensional transposed Poisson algebras \cite{Beites}.
	Fern\'andez Ouaridi proved that a transposed Poisson algebra is simple if and only if its associated Lie bracket is simple, with  analogous results for transposed Poisson superalgebras \cite{Ouaridi}.
	Using the connection with the $\frac{1}{2}$-derivations, Kaygorodov and Khrypchenko described transposed Poisson structures on block Lie algebras and superalgebras\cite{Kaygorodov-2}, generalized Witt algebras \cite{Kaygorodov-3}, and Witt type algebras \cite{Kaygorodov}. Such structures  were also studied on complex Galilean type Lie algebras and superalgebras \cite{Kaygorodov-4}, not-finitely graded Witt-type algebras\cite{Kaygorodov-1}, and Virasoro-type algebras \cite{Kaygorodov-5}. For more results and open questions, see \cite{Beites-1}.  
	
	\subsection{ (Transposed) Poisson $n$-Lie algebras}
	The notion of Poisson $3$-Lie algebras, or more generally,
	Poisson $n$-Lie algebras (also called Nambu-Poisson algebras \cite{L.Takhtajan}) can be seen as a generalization of Poisson algebras to higher arities.
	These structures arise from the study of Nambu mechanics \cite{Y.Nambu} and are applied in Nambu-Poisson manifolds \cite{L.Takhtajan}. See \cite{Azcarraga}  for more details about Poisson $n$-Lie algebras and applications in mathematical physics.
	\begin{defi}
		An $n$-Lie algebra \cite{Filippov} is a vector space $A$ equipped with an $n$-ary bracket $[\cdot, \cdots, \cdot]:\wedge^n A \rightarrow A$, such that for all $x_1, \cdots, x_{n-1}, y_1, \cdots, y_n \in A$, the following Filippov-Jacobi identity holds:  
		\begin{equation*}  
			[x_1, \cdots, x_{n-1}, [y_1, \cdots, y_n]] = \sum_{i=1}^n [y_1, \cdots, y_{i-1}, [x_1, \cdots, x_{n-1}, y_i], y_{i+1}, \cdots, y_n].
		\end{equation*}  
	\end{defi}  
	\begin{defi}
		A Poisson $n$-Lie algebra  \cite{A. Makhlouf} is both a commutative associative
		algebra $(A, \cdot)$ and an $n$-Lie algebra $(A,[\cdot,\cdots,\cdot])$ satisfying the generalized Leibniz rule:
		\begin{equation*}
			[w \cdot x_1,x_2, \cdots,x_n] = w\cdot[x_1,x_2,\cdots,x_n]+x_1\cdot[w,x_2,\cdots,x_n], \quad \forall~x_1, \cdots, x_n, w\in A.
		\end{equation*}
	\end{defi} 
	Transposed Poisson $n$-Lie algebras, introduced in \cite{Bai2020}, generalize transposed Poisson algebras to the $n$-ary case and are the dual notion of Poisson $n$-Lie algebras.
	\begin{defi}
		A transposed Poisson $n$-Lie algebra $(B,\cdot,[\cdot,\cdots,\cdot])$ is both a commutative associative algebra $(B, \cdot)$ and an $n$-Lie algebra $(B,[\cdot,\cdots,\cdot])$ satisfying:
		\begin{equation*}
			n w\cdot[x_1,x_2, \cdots, x_n] = \sum_{i=1}^n[x_1,\cdots, w\cdot x_i,\cdots,x_n],\quad \forall ~ x_1, \cdots, x_n, w\in B.
		\end{equation*}
	\end{defi}
	With an additional ``strongness'' condition, a Poisson $n$-Lie algebra with a derivation can construct an $(n + 1)$-Lie algebra \cite{Dzhumadildav}. Similarly, in \cite{Bai2020}, Bai, Bai, Guo, and Wu proved that each transposed Poisson algebra defines a transposed Poisson $3$-Lie algebra via the multiplication:  
	$$
	[x,y,z]=\mathfrak{D}(x)\cdot[y,z]-\mathfrak{D}(y)\cdot[x,z]+\mathfrak{D}(z)\cdot[x,y], \quad\forall~ x,y,z\in A.
	$$
	Under certain ``strong'' conditions, the $n$-ary ($n\geq 3$) conjecture (from \cite[Question 37]{Beites-1}) of this construction  was solved in \cite{Jiang}. The concept of $\frac{1}{n}$-derivations for $n$-ary algebras, as a special case of $\delta$-derivations, was introduced by Kaygorodov in \cite{Kaygorodov-6}. Using $\frac{1}{n}$-derivations, Ferreira, Kaygorodov, and Lopatkin showed that no non-trivial transposed Poisson $n$-Lie algebra (or superalgebra) structures exist on complex simple finite-dimensional $n$-Lie algebras (or superalgebras) \cite{Ferreira}. Additionally, a classification of transposed Poisson 3-Lie algebras of dimension $3$ is given in \cite{Jiang}, which gives examples of transposed Poisson 3-Lie algebras and partially solves the problem \cite[Question 38]{Beites-1}.
	
	\subsection{Matched pairs,  Manin triples, and bialgebras of associative algebras}
	In 1990, Majid introduced the notion of matched pairs of Lie groups \cite{Majid}. These can form bicrossproduct groups, Hopf algebras, Hopf-von Neumann algebras, or Kac algebras through cross product and coproduct. 
	The idea has been generalized to various contexts, including matched pairs of groups related to braided groups and solutions of the Yang-Baxter equation \cite{Etingof}, matched pairs of Leibniz algebras for structure extension \cite{Agore}, and matched pairs of Lie groupoids and Lie algebroids in geometry \cite{Mackenzie, Mokri}. The notion of a matched pair of Lie algebras (also called double Lie algebras \cite{Lu}) was introduced in the study of Lie bialgebras and Poisson-Lie groups, which are equivalent to Manin triples of Lie algebras and related to quantum groups \cite{Chari}.
	
	Motivated by the study of Lie bialgebras, the
	concepts of matched pairs,  double constructions, and
	Manin triples for associative algebras were systematically developed \cite{Bai2010}.
	A matched pair of associative algebras $A$ and $B$ include a bimodule $(l_A, r_A)$ of $A$ and a bimodule $(l_B, r_B)$ of $B$, and satisfy certain compatibility conditions (see Definition \ref{def: MP-CAA} for the commutative case). This induces an associative algebra structure on the direct sum $A\oplus B$.
	A Manin triple of associative algebra $A$ is a Frobenius algebra $(A, \mathcal{B})$, where $A=A_1\oplus A_1^*$, $A_1$ and $A_1^*$ are associative subalgebras of $A$, and $\mathcal{B}$ is the natural nondegenerate symmetric invariant bilinear form, which is also called double construction of Frobenius algebra \cite{Bai2010}. 
	Moreover, a double construction of Frobenius algebra is equivalent to an associative analogue for Lie bialgebras (called antisymmetric infinitesimal bialgebras). The relationships among these mathematical structures can be summarized in the following diagram:
	\begin{equation*}
		\begin{split}
			\xymatrix{
				\text{Matched pairs of }\atop \text{associative algebras} 
				\ar@2{<->}[r]& \text{Antisymmetric infinitesimal}\atop \text{ bialgebras}
				\ar@2{<->}[r] & \text{Double constructions of}\atop \text{Frobenius algebras}. }
		\end{split}
	\end{equation*}
	
	\subsection{Motivations}
	A Poisson bialgebra is both a double construction commutative Frobenius algebras and a Lie bialgebra \cite{X. Ni}. It fits naturally into a framework for constructing compatible Poisson brackets in integrable systems and shares many properties with Lie bialgebras. For example, the related algebra structures have the following equivalent diagram:
	\begin{equation*}
		\begin{split}
			\xymatrix{
				\text{Matched pairs of}\atop \text{Poisson algebras} 
				\ar@2{<->}[r]& \text{Poisson}\atop \text{ bialgebras}
				\ar@2{<->}[r] & \text{Standard Manin triples of }\atop \text{Poisson algebras}. }
		\end{split}
	\end{equation*}
	In \cite[Question 35]{Beites}, Beites, Ferreira and Kaygorodov proposed the following question:
	\begin{question} {\rm(Chengming Bai)}
		Define and study transposed Poisson bialgebras. 
	\end{question}
	This question was successfully resolved in \cite{G. Liu}. Inspired by this, it is naturally to study the bialgebra theory for (transposed) Poisson 3-Lie  algebras.
	The notion of 3-Lie bialgebras was introduced in \cite{Bai2019}, which studies two types of 3-Lie bialgebras: local cocycles and double constructions. They are extensions of Lie bialgebra structures.
	For commutative associative algebras, the associative bialgebra and infinitesimal bialgebra \cite{M. Aguiar,S.A. Joni} are well-known bialgebra structures. 
	
	In this paper, we combine the theories of 3-Lie bialgebras and commutative and cocommutative infinitesimal bialgebras to develop bialgebras for Poisson 3-Lie algebras, specifically the double construction Poisson 3-Lie bialgebras (see Definition \ref{defi:D-3-bi}). This construction is both a double construction of 3-Lie bialgebras and a commutative cocommutative infinitesimal bialgebra, compatible in a specific sense. We prove that matched pairs of Poisson 3-Lie algebras, Manin triples of Poisson 3-Lie algebras and double construction Poisson 3-Lie bialgebras are equivalent (see Theorem \ref{thm:one-one-co}).
	
	We construct double construction Poisson 3-Lie bialgebras using Manin triples and invariant bilinear forms on both commutative associative algebras and 3-Lie algebras. However, if the mixed products of $\cdot$ and $[\cdot,\cdot,\cdot]$ in a transposed Poisson 3-Lie algebra are non-trivial, then there is no non-trivial such bilinear form (see Proposition \ref{pro:no-bi-form}). To address this, we introduce a special class of transposed Poisson 3-Lie algebras called admissible transposed Poisson 3-Lie algebras (see Definition \ref{ad-Tran}), which are both transposed Poisson 3-Lie algebras and Poisson 3-Lie algebras. A double construction admissible transposed Poisson 3-Lie bialgebra is also a double construction Poisson 3-Lie bialgebra. Additionally, we provide matched pairs of admissible transposed Poisson 3-Lie algebras that are simultaneously matched pairs of Poisson 3-Lie algebras and transposed Poisson 3-Lie algebras. These relationships are summarized in the following diagram:
	\begin{equation*}
		\begin{split}
			\xymatrix{
				&\text{Double construction }\atop \text{admissible transposed Poisson 3-Lie bialgebras} \ar@2{<->}[d]^{Thm~\ref{thm:adm-m-d}}\ar@2{->}[r]^{\qquad\quad Cor~\ref{cor:ad-d-bi}}
				&\text{Double construction }\atop \text{Poisson 3-Lie bialgebras}
				\ar@2{<->}[d]^{Thm~\ref{thm:one-one-co}}\\
				\text{Matched pairs of }\atop \text{transposed Poisson 3-Lie algebras}\ar@2{<-}[r]^{Cor~\ref{cor:adm-m}}
				&\text{Matched pairs of}\atop \text{admissible transposed Poisson 3-Lie algebras} 
				\ar@2{->}[r]^{\qquad \quad Cor~\ref{cor:adm-m}} &\text{Matched pairs of}\atop \text{Poisson 3-Lie algebras} \\
				&&\ar@2{<->}[u]_{Thm~\ref{Manin triple}} \text{Standard Manin triples of }\atop \text{Poisson 3-Lie algebras.} }
		\end{split}
	\end{equation*}
	
	\subsection{Outline of the paper} 
	The paper is organized as follows. In Section \ref{sec:pre}, we provide preliminaries on 3-Lie algebras, commutative associative algebras, Poisson 3-Lie algebras, and transposed Poisson 3-Lie algebras. We introduce the concept of an admissible transposed Poisson 3-Lie algebra and give a non-trivial 4-dimensional example (Example \ref{ex:poisson-and-tran-poisson}). In Section \ref{sec:properties}, we construct Poisson 3-Lie algebras and transposed Poisson 3-Lie algebras on direct sums and tensor products of vector spaces. In Section \ref{sec:double-Poisson}, we define representations and dual representations of Poisson 3-Lie algebras, construct semi-direct product Poisson 3-Lie algebras (Proposition \ref{pro:possion-semi-direct}), and discuss matched pairs and Manin triples. We also explore double construction Poisson 3-Lie bialgebras and establish equivalences between these structures (Theorem \ref{thm:one-one-co}). In Section \ref{sec:trans-Poisson}, we introduce representations and matched pairs for transposed Poisson 3-Lie algebras and prove that no natural bialgebra theory exists due to trivial mixed products of $\cdot$ and $[\cdot,\cdot,\cdot]$ (Proposition \ref{pro:no-bi-form}). In Section \ref{sec:ad-poisson}, we define representations, matched pairs, and double construction bialgebras for admissible transposed Poisson 3-Lie algebras. We show equivalence between matched pairs and double construction admissible transposed Poisson 3-Lie bialgebras (Theorem \ref{thm:adm-m-d}) and provide a nontrivial example (Example \ref{ex:no-bi}). In Section \ref{sec:7}, we propose future research questions.

	In this paper, the base field is taken to be $\mathbb{F}$ unless otherwise specified. This is the field over which we take all associative, 3-Lie, and (transposed) Poisson 3-Lie  algebras, vector spaces, linear maps, and tensor products, etc., and the characteristic of $\mathbb{F}$ is zero. Moreover, throughout this paper, all algebras and vector spaces are assumed to be finite-dimensional.	
	Let $V$ be a vector space, $V^*$ be the dual space of $V$. For each positive integer $k$, we identify the tensor product $\otimes^k V$ with the space of multi-linear
	maps from $\underbrace{V^*\times \cdots \times V^*}_{k-times}\rightarrow \mathbb{F}$, such that
	\begin{equation}\label{eq:action}
		\langle\xi_1\otimes \cdots\otimes \xi_k,v_1\otimes\cdots\otimes v_k\rangle=\langle\xi_1,v_1\rangle\cdots\langle\xi_k,v_k\rangle,\quad \forall ~\xi_1,\cdots,\xi_k\in V^*,v_1,\cdots,v_k\in V,
	\end{equation}
	where $\langle \xi_i,v_i\rangle=\xi_i(v_i)$, $1\leq i\leq k$.
	\section{Preliminaries}\label{sec:pre}
	In this section, we first recall some necessary definitions and results for 3-Lie algebras, Poisson 3-Lie algebras, and transposed Poisson 3-Lie algebras. Then we introduce the concept of an admissible transposed Poisson 3-Lie algebra and provide a non-trivial 4-dimensional example (Example \ref{ex:poisson-and-tran-poisson}).
	\begin{defi} 
		(\cite{Filippov}) A \textbf{3-Lie algebra} is a vector space $A$ together with a skew-symmetric linear map  $[\cdot,\cdot,\cdot]:\otimes^3A\rightarrow A$ such that for all $x_i\in A, 1\leq i \leq 5$, the following Filippov-Jacobi identity holds:
		\begin{equation} \label{Jac}
			[x_1,x_2,[x_3,x_4,x_5]]=[[x_1,x_2,x_3],x_4,x_5]+[x_3,[x_1,x_2,x_4],x_5]+[x_3,x_4,[x_1,x_2,x_5]].
		\end{equation}
	\end{defi}
	A \textbf{derivation} on a 3-Lie algebra $(A,[\cdot,\cdot,\cdot])$ is a linear map $D:A\rightarrow A$ satisfying
	\[D[x_1,x_2,x_3]=[D(x_1),x_2,x_3]+[x_1,D(x_2),x_3]+[x_1,x_2,D(x_3)],\quad \forall~~ x_1,x_2,x_3\in A.\]
	For all $x_1,x_2\in A$, the operator 
	\begin{equation}\label{adx1x2}
		\mathrm{ad}_{x_1,x_2}:A\rightarrow A,\quad \mathrm{ad}_{x_1,x_2}(x):=[x_1,x_2,x],\quad \forall~ x \in A,
	\end{equation}
	is a derivation of the 3-Lie algebra $(A,[\cdot,\cdot,\cdot])$. 
	\begin{defi}
		(\cite{Kasymov}) A \textbf{representation of 3-Lie algebra} $(A,[\cdot,\cdot,\cdot])$ on a vector space $V$ is a skew-symmetric linear map $\rho:\otimes^2A\rightarrow \mathfrak{gl}(V)$ such that for all $x_i\in A, 1\leq i\leq 4$,
		\begin{eqnarray*}
			&&\rho(x_1, x_2)\rho(x_3, x_4)-\rho(x_3, x_4)\rho(x_1, x_2) = \rho([x_1, x_2, x_3], x_4)-\rho([x_1, x_2, x_4], x_3),\\
			&&\rho([x_1, x_2, x_3], x_4) =\rho(x_1, x_2)\rho(x_3, x_4) + \rho(x_2, x_3)\rho(x_1, x_4) + \rho(x_3, x_1)\rho(x_2, x_4).
		\end{eqnarray*}	
	\end{defi}
	\begin{pro}\label{pro:3-lie-dual-rep}
		$($\cite{Bai2019}$)$ Let $(V, \rho)$ be a representation of 3-Lie algebra $(A,[\cdot,\cdot,\cdot])$. Define $\rho^*:\otimes^2A\rightarrow\mathfrak{gl}(V^*)$ by
		\begin{equation}\label{3-lie-dual}
			\langle \rho^{*}(x_1,x_2)\xi,v\rangle=-\langle \xi,\rho(x_1,x_2)v\rangle,\quad\forall~ x_1,x_2\in A, \xi\in V^{*}, v\in V. 
		\end{equation}
		Then $(V^{*}, \rho^{*})$ is a representation of $(A,[\cdot,\cdot,\cdot])$, called the dual representation of $(V,\rho)$.
	\end{pro}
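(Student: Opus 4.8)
The plan is to verify directly that $\rho^*$ satisfies the two defining identities of a representation of a $3$-Lie algebra, reducing everything to operator identities on $V$ by pairing with arbitrary $\xi\in V^*$ and $v\in V$. The starting observation is that, by \eqref{3-lie-dual}, the operator $\rho^*(x_1,x_2)$ is the negative adjoint of $\rho(x_1,x_2)$ with respect to the pairing $\langle\cdot,\cdot\rangle$; in particular $\rho^*$ inherits skew-symmetry in $(x_1,x_2)$ from $\rho$, and since taking adjoints reverses the order of composition one has $\langle\rho^*(x_1,x_2)\rho^*(x_3,x_4)\xi,v\rangle=\langle\xi,\rho(x_3,x_4)\rho(x_1,x_2)v\rangle$. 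Thus each representation identity for $\rho^*$ translates into an identity for compositions of the $\rho(x_i,x_j)$ read in the opposite order.

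First I would dispatch the first defining identity. Pairing it with $\xi$ and $v$ and using the order-reversal above, the left-hand side becomes $-\langle\xi,[\rho(x_1,x_2),\rho(x_3,x_4)]v\rangle$ and the right-hand side becomes $-\langle\xi,(\rho([x_1,x_2,x_3],x_4)-\rho([x_1,x_2,x_4],x_3))v\rangle$; these agree by the first defining identity for $\rho$, so no further work is needed here.

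The real content is the second identity. After pairing, it is equivalent to the operator identity
\begin{equation*}
\rho(x_3,x_4)\rho(x_1,x_2)+\rho(x_1,x_4)\rho(x_2,x_3)+\rho(x_2,x_4)\rho(x_3,x_1)=-\rho([x_1,x_2,x_3],x_4).
\end{equation*}
The obstacle is that the three products on the left are composed in the opposite order to the ones appearing in the second defining identity for $\rho$, so that identity cannot be applied verbatim. My plan is to rewrite each product $\rho(x_3,x_4)\rho(x_1,x_2)$ as $\rho(x_1,x_2)\rho(x_3,x_4)-[\rho(x_1,x_2),\rho(x_3,x_4)]$ (and cyclically), so that the sum of the reordered products collapses, by the second defining identity, to $\rho([x_1,x_2,x_3],x_4)$, leaving a correction equal to the sum of three commutators. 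Evaluating each commutator by the first defining identity and using that the ternary bracket is invariant under cyclic permutation of its three entries, the correction reduces to $3\rho([x_1,x_2,x_3],x_4)$ minus the auxiliary sum $\rho([x_1,x_2,x_4],x_3)+\rho([x_2,x_3,x_4],x_1)+\rho([x_3,x_1,x_4],x_2)$.

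Everything therefore hinges on the auxiliary identity
\begin{equation*}
\rho([x_1,x_2,x_4],x_3)+\rho([x_2,x_3,x_4],x_1)+\rho([x_3,x_1,x_4],x_2)=\rho([x_1,x_2,x_3],x_4),
\end{equation*}
which I expect to be the main obstacle and which I would prove as follows. Compute $\rho([x_1,x_2,x_3],x_4)+\rho([x_1,x_2,x_4],x_3)$ using the second defining identity for $\rho$: the two $\rho(x_1,x_2)\rho(x_3,x_4)$ terms cancel after using skew-symmetry, and what remains is exactly $[\rho(x_2,x_3),\rho(x_1,x_4)]+[\rho(x_3,x_1),\rho(x_2,x_4)]$. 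Expanding these two commutators by the first defining identity and again invoking cyclic invariance of the bracket yields $2\rho([x_1,x_2,x_3],x_4)-\rho([x_2,x_3,x_4],x_1)-\rho([x_3,x_1,x_4],x_2)$; comparing the two expressions for $\rho([x_1,x_2,x_3],x_4)+\rho([x_1,x_2,x_4],x_3)$ gives the auxiliary identity. Substituting it back, the correction becomes $2\rho([x_1,x_2,x_3],x_4)$, so the left-hand side equals $\rho([x_1,x_2,x_3],x_4)-2\rho([x_1,x_2,x_3],x_4)=-\rho([x_1,x_2,x_3],x_4)$, which is the desired operator identity and completes the verification of the second representation identity, hence of the proposition.
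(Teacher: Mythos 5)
Your proof is correct: the order-reversing adjoint computation, the reduction of the second representation identity for $\rho^*$ to the operator identity $\rho(x_3,x_4)\rho(x_1,x_2)+\rho(x_1,x_4)\rho(x_2,x_3)+\rho(x_2,x_4)\rho(x_3,x_1)=-\rho([x_1,x_2,x_3],x_4)$, and your derivation of the auxiliary identity $\rho([x_1,x_2,x_4],x_3)+\rho([x_2,x_3,x_4],x_1)+\rho([x_3,x_1,x_4],x_2)=\rho([x_1,x_2,x_3],x_4)$ from the two defining identities all check out, as does the final bookkeeping (the three commutators sum to $3\rho([x_1,x_2,x_3],x_4)$ minus the auxiliary sum, i.e.\ to $2\rho([x_1,x_2,x_3],x_4)$, giving the required sign). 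The paper itself offers no proof of this proposition---it is quoted from \cite{Bai2019}---and yours is the standard direct verification by dualization, with the one genuinely nontrivial ingredient (the auxiliary identity, which in the literature is often invoked as a known third identity satisfied by representations of $3$-Lie algebras) proved self-containedly rather than cited.
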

	\begin{ex}
		(\cite{Kasymov}) Let $(A,[\cdot,\cdot,\cdot])$ be a 3-Lie algebra. The linear map $\mathrm{ad}:\otimes^2A\rightarrow \mathfrak{gl}(A)$ is given by \eqref{adx1x2} defines a representation $(A, \mathrm{ad})$ which is called the \textbf{adjoint representation of 3-Lie algebras}. The dual representation $(A^*, \mathrm{ad}^*)$ of the adjoint representation $(A, \mathrm{ad})$ is called the \textbf{coadjoint representation of 3-Lie algebras}.
	\end{ex}
	\begin{defi}
		(\cite{Bai2010}) A \textbf{representation of a commutative associative algebra}  $(A,\cdot)$ on a vector space $V$ is a linear map $\mu:A\rightarrow \mathfrak{gl}(V)$ satisfying 
		\[\mu(x\cdot y)=\mu(x)\mu(y),\quad \forall~ x,y \in A.\]
		\begin{pro}\label{pro:com-asso-dual-rep}
			$($\cite{Bai2010}$)$ Let $(V,\mu)$ be a representation of a commutative associative algebra $(A,\cdot)$. Define $\mu^*:A\rightarrow\mathfrak{gl}(V^*)$ by
			\begin{equation}\label{com-asso-dual}
				\langle \mu^{*}(x)\xi,v\rangle=-\langle \xi,\mu(x)v\rangle,\quad\forall~ x\in A, \xi\in V^{*}, v\in V.
			\end{equation}
			Then $(V^{*}, -\mu^{*})$ is a representation of $(A,\cdot)$, called the dual representation of $(V,\mu)$.
		\end{pro}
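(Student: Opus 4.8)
The plan is to verify directly from the definition that the linear map $\nu := -\mu^*$ satisfies the multiplicativity condition $\nu(x\cdot y) = \nu(x)\nu(y)$ for all $x,y\in A$, which is exactly what it means for $(V^*,\nu)$ to be a representation of the commutative associative algebra $(A,\cdot)$. Since this is an identity between endomorphisms of $V^*$, it suffices to test it by pairing with an arbitrary $v\in V$ after applying both sides to an arbitrary $\xi\in V^*$, and then to invoke the defining relation \eqref{com-asso-dual} to translate everything back to the known representation $\mu$ on $V$.

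First I would unwind the composition $\mu^*(x)\mu^*(y)$. Applying \eqref{com-asso-dual} twice,
\begin{align*}
\langle \mu^*(x)\mu^*(y)\xi, v\rangle &= -\langle \mu^*(y)\xi, \mu(x)v\rangle = \langle \xi, \mu(y)\mu(x)v\rangle,
\end{align*}
where the two sign changes combine to a plus. Next, because $(A,\cdot)$ is commutative and $\mu$ is a representation, I would use $\mu(y)\mu(x) = \mu(y\cdot x) = \mu(x\cdot y) = \mu(x)\mu(y)$, so the right-hand side becomes $\langle \xi, \mu(x\cdot y)v\rangle$. Unwinding $\mu^*(x\cdot y)$ is then a single application of \eqref{com-asso-dual}, giving $\langle \mu^*(x\cdot y)\xi, v\rangle = -\langle \xi, \mu(x\cdot y)v\rangle$. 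Comparing the two computations yields $\mu^*(x)\mu^*(y) = -\mu^*(x\cdot y)$, and hence $\nu(x)\nu(y) = \mu^*(x)\mu^*(y) = -\mu^*(x\cdot y) = \nu(x\cdot y)$, as required.

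There is no genuine obstacle in this computation, but the point worth isolating is the sign: the naive candidate $\mu^*$ is not itself a representation, since transposition reverses the order of composition while commutativity cannot absorb the resulting sign, and one finds instead $\mu^*(x\cdot y) = -\mu^*(x)\mu^*(y)$. Introducing the overall factor $-1$ in $\nu = -\mu^*$ is precisely what corrects this, because the product $\nu(x)\nu(y)$ picks up $(-1)^2 = 1$ while $\nu(x\cdot y)$ picks up a single $-1$. This global sign correction is the one structural feature that distinguishes the commutative associative case from the Lie-type dualization of Proposition \ref{pro:3-lie-dual-rep}, where the skew-symmetry of the bracket already reconciles the order reversal and no such factor is needed.
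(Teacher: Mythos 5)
Your proof is correct: the double unwinding of the pairing via \eqref{com-asso-dual}, the use of commutativity to rewrite $\mu(y)\mu(x)=\mu(x\cdot y)$, and the observation that the resulting identity $\mu^{*}(x)\mu^{*}(y)=-\mu^{*}(x\cdot y)$ forces the overall sign correction $\nu=-\mu^{*}$ constitute exactly the standard verification. The paper states this proposition as a cited result from \cite{Bai2010} without giving a proof, and your computation matches the pairing-based arguments the paper itself employs in analogous places (e.g., the proof of Proposition \ref{pro:poisson-dual}), so there is nothing to flag.
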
		
	\end{defi}
	\begin{ex}
		Let $(A,\cdot)$ be a commutative associative algebra. For any $x,y\in A$, the linear map $\mathcal{L}:A\rightarrow\mathfrak{gl}(A)$ defined by $x\mapsto\mathcal{L}(x)$,   $\mathcal{L}(x):y\mapsto x\cdot y$,  is a representation of $(A,\cdot)$ which is called the \textbf{adjoint representation of commutative associative algebras}. The dual representation $(A^*, -\mathcal{L}^*)$ of the adjoint representation $(A, \mathcal{L})$ is called the \textbf{coadjoint representation of commutative associative algebras}.
	\end{ex}
	\begin{defi}
		(\cite{A. Makhlouf}) A \textbf{Poisson 3-Lie algebra} is a triple $(A,\cdot, [\cdot,\cdot,\cdot])$, where $(A,\cdot)$ is a commutative associative algebra and $(A, [\cdot,\cdot,\cdot])$ is a 3-Lie algebra that satisfies the following  generalized Leibniz rule:
		\begin{equation}\label{eq:poisson-alg}
			[w\cdot x,y,z]=w\cdot[x,y,z]+x\cdot[w,y,z], \quad \forall~ x, y, z, w \in A.
		\end{equation}
	\end{defi}
	\begin{defi}
		Let $(A_1,\cdot_1,[\cdot,\cdot,\cdot]_1)$ and $(A_2,\cdot_2,[\cdot,\cdot,\cdot]_2)$ be two Poisson 3-Lie algebras.
		A \textbf{homomorphism} of Poisson 3-Lie algebra is a linear map $\varphi:A_1\rightarrow A_2$ such that
		\begin{eqnarray*}
			\varphi([x, y, z]_1) = [\varphi(x),\varphi(y),\varphi(z)]_2,\quad
			\varphi(x \cdot_1 y) = \varphi(x) \cdot_2 \varphi(y), \quad\forall~ x, y, z \in A_1.
		\end{eqnarray*} 
		An isomorphism of Poisson 3-Lie algebras is an invertible homomorphism.
	\end{defi}
	\begin{ex}
		(\cite{A. Makhlouf}) Let $A$ be a vector space generated by the polynomials in the variables $x_1, x_2, x_3$. The multiplication $\cdot$ is defined in the usual way, and the ternary operation $[\cdot,\cdot,\cdot]$ is defined as follows:
		\begin{equation*}
			[ f_1,f_2,f_3] =\left| \begin{matrix}
				{\frac{\partial f_1}{\partial x_1}}&		\frac{\partial f_1}{\partial x_2}&		\frac{\partial f_1}{\partial x_3}\\
				\frac{\partial f_2}{\partial x_1}&		\frac{\partial f_2}{\partial x_2}&		\frac{\partial f_2}{\partial x_3}\\
				\frac{\partial f_3}{\partial x_1}&		\frac{\partial f_3}{\partial x_2}&		\frac{\partial f_3}{\partial x_3}\\
			\end{matrix} \right|, \quad \forall~ f_1,f_2,f_3\in A.	
		\end{equation*}
		Then $(A,\cdot, [\cdot,\cdot,\cdot])$ is a Poisson 3-Lie algebra.
	\end{ex}
	\begin{defi}
		(\cite{Bai2020}) A \textbf{transposed Poisson 3-Lie algebra} is a triple $(B,\cdot, [\cdot,\cdot,\cdot])$, where $(B,\cdot)$ is a commutative associative algebra and $(B, [\cdot,\cdot,\cdot])$ is a 3-Lie algebra that satisfies
		\begin{equation}\label{eq:trans-poisson-alg}
			3w\cdot[x,y,z]=[w\cdot x,y,z]+[x,w\cdot y,z]+[x,y,w\cdot z], \quad \forall~ x, y, z, w \in B.
		\end{equation}
	\end{defi}
	
	\begin{ex}\label{ex:trans-poisson}
		(\cite{Jiang}) Let $B$ be a 3-dimensional vector space with a basis  $\{e_1,e_2,e_3\}$. Define the nonzero operations $\cdot$ and $[\cdot,\cdot,\cdot]$ by
		\begin{equation*}
			e_2\cdot e_2=e_1,~ e_3\cdot e_3=-3e_1,~
			[e_1,e_2,e_3] =e_1.	
		\end{equation*}
		Then $(B,\cdot, [\cdot,\cdot,\cdot])$ is a transposed Poisson 3-Lie algebra.
	\end{ex}
	\begin{rmk}
		In \cite{B. Sartayev}, the author proves that every transposed Poisson algebra is an $F$-manifold algebra. However, for ternary $F$-manifold algebra as defined in \cite{Benhassine}, a transposed Poisson 3-Lie algebra is not necessarily a ternary $F$-manifold algebra.
	\end{rmk}
	\begin{pro}
		Let $(A,\cdot,[\cdot,\cdot,\cdot])$ be a transposed Poisson 3-Lie algebra. For any $h\in A$, define a trilinear map $[\cdot,\cdot,\cdot]_h$ on $A$ by
		\begin{equation}\label{mul-h}
			[x,y,z]_h:=h\cdot[x,y,z],\quad \forall~ x,y,z\in A.
		\end{equation}
		Then $(A,\cdot,[\cdot,\cdot,\cdot]_h)$ is a transposed Poisson 3-Lie algebra.
	\end{pro}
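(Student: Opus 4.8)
\emph{Plan.} Two things must be checked: that $(A,[\cdot,\cdot,\cdot]_h)$ is again a $3$-Lie algebra, and that $(A,\cdot,[\cdot,\cdot,\cdot]_h)$ satisfies the transposed Leibniz rule \eqref{eq:trans-poisson-alg}. Skew-symmetry of $[\cdot,\cdot,\cdot]_h$ is immediate from skew-symmetry of $[\cdot,\cdot,\cdot]$ and linearity of multiplication by $h$. The transposed Leibniz rule is also quick: using the original rule \eqref{eq:trans-poisson-alg} together with commutativity and associativity of $\cdot$,
\begin{align*}
[w\cdot x,y,z]_h+[x,w\cdot y,z]_h+[x,y,w\cdot z]_h
&=h\cdot\big([w\cdot x,y,z]+[x,w\cdot y,z]+[x,y,w\cdot z]\big)\\
&=h\cdot\big(3w\cdot[x,y,z]\big)=3w\cdot\big(h\cdot[x,y,z]\big)=3w\cdot[x,y,z]_h.
\end{align*}
Hence the only substantial point is the Filippov--Jacobi identity \eqref{Jac} for the bracket $[\cdot,\cdot,\cdot]_h$ defined in \eqref{mul-h}.

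Before attacking \eqref{Jac} I would first record a fundamental identity valid in any transposed Poisson $3$-Lie algebra, namely
\begin{equation*}
x_1\cdot[x_2,x_3,x_4]-x_2\cdot[x_1,x_3,x_4]+x_3\cdot[x_1,x_2,x_4]-x_4\cdot[x_1,x_2,x_3]=0.
\end{equation*}
This follows by writing the transposed Leibniz rule \eqref{eq:trans-poisson-alg} four times, with $w=x_i$ acting on the bracket of the remaining three arguments, and forming the alternating sum: every term on the right-hand sides cancels in pairs using only skew-symmetry of $[\cdot,\cdot,\cdot]$ and commutativity of $\cdot$. Specialising $(x_1,x_2,x_3,x_4)=(h,x,y,z)$ rewrites the new bracket as
\begin{equation*}
[x,y,z]_h=h\cdot[x,y,z]=x\cdot[h,y,z]-y\cdot[h,x,z]+z\cdot[h,x,y],
\end{equation*}
a form in which the factor $h$ has been redistributed onto the other arguments, and which interacts well with both defining rules.

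To prove \eqref{Jac} for $[\cdot,\cdot,\cdot]_h$, the plan is to expand both sides, which each carry an overall factor $h$, and to pull the remaining inner factor $h$ through the brackets using the original transposed Leibniz rule \eqref{eq:trans-poisson-alg}. Writing $\Sigma$ for the common value furnished by the original Filippov--Jacobi identity applied to $\mathrm{ad}_{x_1,x_2}$, the leading contributions on both sides reduce to $3h^2\cdot\Sigma$ and cancel. The identity then collapses to showing that $h$ annihilates a sum of cross terms, each of the shape $[a,b,h\cdot u]-h\cdot[a,b,u]$ inserted into one slot of a bracket, i.e. the measured failure of $\mathrm{ad}_{a,b}$ to commute with multiplication by $h$.

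The main obstacle is precisely this cross-term cancellation. Attempting it with the transposed Leibniz rule alone is circular, since each such manipulation merely reproduces an equivalent combination of the same terms. The decisive tool is the fundamental identity above: applying it converts each bracket-of-a-product into a sum of products-of-brackets, after which commutativity and associativity of $\cdot$, together with the original Filippov--Jacobi identity, are to be used to force the residual terms to cancel once the overall factor $h$ is taken into account. I expect the book-keeping in this final cancellation, rather than any conceptual difficulty, to constitute the bulk of the work.
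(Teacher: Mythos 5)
Your verification of skew-symmetry and of the transposed Leibniz rule for $[\cdot,\cdot,\cdot]_h$ is correct and coincides with the paper's computation, and your ``fundamental identity''
\begin{equation*}
x_1\cdot[x_2,x_3,x_4]-x_2\cdot[x_1,x_3,x_4]+x_3\cdot[x_1,x_2,x_4]-x_4\cdot[x_1,x_2,x_3]=0
\end{equation*}
is true, with exactly the alternating-sum derivation you describe. But the proof is not complete where it matters most: the Filippov--Jacobi identity \eqref{Jac} for $[\cdot,\cdot,\cdot]_h$, which you yourself identify as ``the only substantial point,'' is never actually established. Your reduction is right as far as it goes --- expanding both sides via \eqref{eq:trans-poisson-alg}, the leading terms $3h^2\cdot\Sigma$ do cancel by the original Filippov--Jacobi identity --- but what remains is a genuine residual identity among cross terms of the form $h\cdot[h\cdot x_i,\ldots]$, and you only assert a plan (``apply the fundamental identity, then force the residual terms to cancel'') while explicitly conceding that the book-keeping is not done. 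That residual cancellation is essentially equivalent to the identity
\begin{equation*}
[h\cdot[y_1,y_2,y_3],x_1,x_2]=[h\cdot[y_1,x_1,x_2],y_2,y_3]+[y_1,h\cdot[y_2,x_1,x_2],y_3]+[y_1,y_2,h\cdot[y_3,x_1,x_2]],
\end{equation*}
which is the entire content of the step, and it is not an obvious consequence of the fundamental identity plus commutativity and associativity; it is a nontrivial theorem in its own right.

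This is precisely where your route and the paper's diverge: the paper does not attempt the expansion at all, but instead quotes the displayed identity as \cite[Theorem 2.3]{Huang}, from which the $3$-Lie property of $[\cdot,\cdot,\cdot]_h$ follows in one line after multiplying by $h$. So the gap in your proposal is concrete: either you must carry out in full the cross-term cancellation you deferred (whose feasibility by your proposed tools you have not demonstrated --- the danger of circularity you noted for the transposed Leibniz rule has not been shown to disappear once the fundamental identity is added), or you must prove the Huang-type identity above, which is the actual heart of the proposition. As submitted, the argument proves the easy two-thirds of the statement and replaces the hard third with a research plan.
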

	\begin{proof}
		Since $[\cdot,\cdot,\cdot]$ is skew-symmetric, the trilinear map $[\cdot,\cdot,\cdot]_h$ is also skew-symmetric. By \cite[Theorem 2.3]{Huang}, for all $y_1,y_2,y_3,x_1,x_2\in A$, we have
		\begin{equation}
			[h\cdot[y_1,y_2,y_3],x_1,x_2]=[h\cdot[y_1,x_1,x_2],y_2,y_3]+[y_1,h\cdot[y_2,x_1,x_2],y_3]+[y_1,y_2,h\cdot[y_3,x_1,x_2]].
		\end{equation}
		Then 
		\begin{eqnarray*}
			&&[[y_1,y_2,y_3]_h,x_1,x_2]_h-[[y_1,x_1,x_2]_h,y_2,y_3]_h+[y_1,[y_2,x_1,x_2]_h,y_3]_h+[y_1,y_2,[y_3,x_1,x_2]_h]_h \\
			&\overset{\eqref{mul-h}}{=}&h\cdot([h\cdot[y_1,y_2,y_3],x_1,x_2]-[h\cdot[y_1,x_1,x_2],y_2,y_3]+[y_1,h\cdot[y_2,x_1,x_2],y_3]\\
			&&+[y_1,y_2,h\cdot[y_3,x_1,x_2]])\\
			&=&0.
		\end{eqnarray*} 
		Therefore, $(A,[\cdot,\cdot,\cdot]_h)$ is a 3-Lie algebra. Furthermore, we have
		\begin{eqnarray*}
			&&[x_1\cdot y_1,y_2,y_3]_h+[y_1,x_1\cdot y_2,y_3]_h+[y_1,y_2,x_1\cdot y_3]_h\\
			&\overset{\eqref{mul-h}}{=}&h\cdot([x_1\cdot y_1,y_2,y_3]+[y_1,x_1\cdot y_2,y_3]+[y_1,y_2,x_1\cdot y_3])\\
			&\overset{\eqref{eq:trans-poisson-alg}}{=}&3h\cdot x_1\cdot[y_1,y_2,y_3]\\
			&\overset{\eqref{mul-h}}{=}&3x_1\cdot[y_1,y_2,y_3]_h, \quad \forall~ ~ y_1, y_2, y_3, x_1\in A.
		\end{eqnarray*}
		Hence $(A,\cdot,[\cdot,\cdot,\cdot]_h)$ is a transposed Poisson 3-Lie algebra.
	\end{proof}
	\begin{pro} \label{pro:both-trans-poisson-and-poisson}
		$($\cite{Bai2020}$)$	Let $(A,\cdot)$ be a commutative associative algebra and $(A,[\cdot,\cdot,\cdot])$ be a 3-Lie algebra. Then $(A,\cdot,[\cdot,\cdot,\cdot])$ is both a Poisson 3-Lie algebra and a transposed Poisson 3-Lie algebra if and only if 
		\begin{equation}\label{eq:trans-poisson-and-poisson}
			u\cdot[x,y,z]=[u\cdot x,y,
			z]=0, \quad \forall~ x,y,z,u\in A.
		\end{equation}
	\end{pro}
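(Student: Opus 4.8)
The plan is to prove the trivial direction first and then to extract the vanishing of both mixed products from the two defining identities by a careful sign-tracking computation. For the backward implication, suppose \eqref{eq:trans-poisson-and-poisson} holds. Then in the generalized Leibniz rule \eqref{eq:poisson-alg} every term vanishes separately, since $[w\cdot x,y,z]=0$ on the left and $w\cdot[x,y,z]=x\cdot[w,y,z]=0$ on the right; likewise each of the three brackets on the right of the transposed identity \eqref{eq:trans-poisson-alg} is of the form $[u\cdot x,y,z]$ (up to skew-symmetry) and hence is zero, while the left side $3w\cdot[x,y,z]$ also vanishes. Thus $(A,\cdot,[\cdot,\cdot,\cdot])$ is simultaneously a Poisson and a transposed Poisson 3-Lie algebra.

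For the forward implication, assume both \eqref{eq:poisson-alg} and \eqref{eq:trans-poisson-alg} hold. First I would rewrite the three brackets on the right of \eqref{eq:trans-poisson-alg} using the Leibniz rule. Since \eqref{eq:poisson-alg} is stated only with the product in the \emph{first} slot, I would use skew-symmetry of $[\cdot,\cdot,\cdot]$ to move $w\cdot y$ and $w\cdot z$ into the first slot, obtaining
\[
[x,w\cdot y,z]=w\cdot[x,y,z]-y\cdot[w,x,z],\qquad [x,y,w\cdot z]=w\cdot[x,y,z]+z\cdot[w,x,y].
\]
Substituting these together with $[w\cdot x,y,z]=w\cdot[x,y,z]+x\cdot[w,y,z]$ into \eqref{eq:trans-poisson-alg} and cancelling the common term $3w\cdot[x,y,z]$ yields the key relation
\[
x\cdot[w,y,z]-y\cdot[w,x,z]+z\cdot[w,x,y]=0. \tag{$*$}
\]

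Next I would symmetrize $(*)$: interchanging $w$ and $x$ and using $[x,w,z]=-[w,x,z]$, $[x,w,y]=-[w,x,y]$ gives $w\cdot[x,y,z]+y\cdot[w,x,z]-z\cdot[w,x,y]=0$, and adding this to $(*)$ cancels the last two terms, leaving the antisymmetry relation
\[
w\cdot[x,y,z]+x\cdot[w,y,z]=0. \tag{$\star$}
\]
Finally I would feed $(\star)$ back into $(*)$: applying $(\star)$ to each summand (using also $[y,x,z]=-[x,y,z]$ and $[z,x,y]=[x,y,z]$) rewrites all three terms as $-w\cdot[x,y,z]$, so $(*)$ collapses to $-3\,w\cdot[x,y,z]=0$. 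Since $\mathrm{char}\,\mathbb{F}=0$, this forces $w\cdot[x,y,z]=0$, whereupon \eqref{eq:poisson-alg} immediately gives $[w\cdot x,y,z]=w\cdot[x,y,z]+x\cdot[w,y,z]=0$, establishing \eqref{eq:trans-poisson-and-poisson}.

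The main obstacle I anticipate is purely bookkeeping: because the Leibniz rule is declared only for the first argument, every step must route the commutative product into the first slot through skew-symmetry, and the induced signs must be tracked with care. The conceptual crux is recognizing that one application of the two identities produces only the antisymmetric relation $(\star)$, and that a second substitution of $(\star)$ into $(*)$ is needed to turn the expression into a nonzero scalar multiple of $w\cdot[x,y,z]$; the factor $3$ that appears is precisely what lets the characteristic-zero hypothesis conclude the argument.
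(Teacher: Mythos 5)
Your proof is correct, and the verification checks out at every step: the rewrites $[x,w\cdot y,z]=w\cdot[x,y,z]-y\cdot[w,x,z]$ and $[x,y,w\cdot z]=w\cdot[x,y,z]+z\cdot[w,x,y]$ are right (the cyclic move of $w\cdot z$ to the first slot is an even permutation, so no sign appears), the cancellation of $3w\cdot[x,y,z]$ yields your relation $(*)$, the $w\leftrightarrow x$ symmetrization correctly produces the antisymmetry $(\star)$, and substituting $(\star)$ back into $(*)$ does collapse all three terms to $-w\cdot[x,y,z]$, giving $3w\cdot[x,y,z]=0$ and hence, by $\mathrm{char}\,\mathbb{F}=0$ and one application of \eqref{eq:poisson-alg}, the full conclusion \eqref{eq:trans-poisson-and-poisson}. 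Note that the paper itself offers no proof of this statement --- it is quoted with a citation to \cite{Bai2020} --- so there is no in-paper argument to compare against; your derivation is a complete, self-contained proof and follows the same route as the cited source (substitute the generalized Leibniz rule into the transposed rule, extract the cyclic identity, antisymmetrize, and collapse), so nothing further is needed.
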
	
	\begin{defi} \label{ad-Tran}
		Let $(A,\cdot)$ be a commutative associative algebra and $(A,[\cdot,\cdot,\cdot])$ be a 3-Lie algebra. If $\cdot$ and $[\cdot,\cdot,\cdot]$ satisfy \eqref{eq:trans-poisson-and-poisson}, then the triple $(A,\cdot,[\cdot,\cdot,\cdot])$ is called an \textbf{ admissible transposed Poisson 3-Lie algebra}.
	\end{defi}
	Example \ref{ex:trans-poisson} is not a Poisson 3-Lie algebra. Next we provide a non-trivial example  that is both a Poisson 3-Lie algebra and a transposed Poisson 3-Lie algebra.
	\begin{ex}\label{ex:poisson-and-tran-poisson}
		Let $A$ be a 4-dimensional vector space with a basis $\{e_1,e_2,e_3,e_4\}$. Define the  nonzero operations $\cdot$ and $[\cdot,\cdot,\cdot]$ on $A$ by
		$$[e_2,e_3,e_4]=e_1, \quad e_2\cdot e_3=e_3\cdot e_2=e_1.$$
		Then $(A,\cdot,[\cdot,\cdot,\cdot])$ is both a Poisson 3-Lie algebra and a transposed Poisson 3-Lie algebra. 
	\end{ex}	
	\section{Direct sum and tensor product of (transposed) Poisson 3-Lie algebras}\label{sec:properties}
	In this section, we construct Poisson 3-Lie algebras and transposed Poisson 3-Lie algebras on the direct sum and tensor product of vector spaces.
	\subsection{Direct sum and tensor product of  Poisson 3-Lie algebras}
	\begin{pro}\label{pro:poisson-direct-sum}
		Let $(A_1,\cdot_1, [\cdot,\cdot,\cdot]_1)$ and $(A_2,\cdot_2, [\cdot,\cdot,\cdot]_2)$ be two Poisson 3-Lie algebras. For all $x_i\in A_1, y_i\in A_2, 1\leq i\leq3$, define linear maps $\cdot_{A_1 \oplus A_2}:\otimes^2(A_1 \oplus A_2)\rightarrow A_1 \oplus A_2$ and $[\cdot,\cdot,\cdot]_{A_1 \oplus A_2}:\otimes^3(A_1 \oplus A_2)\rightarrow A_1 \oplus A_2$ by
		\begin{align}
			(x_1+y_1)\cdot_{A_1 \oplus A_2}(x_2+y_2)&=x_1\cdot_1 x_2+y_1\cdot_2 y_2,\label{eq:poisson-direct-sum1} \\ 
			[x_1+y_1,x_2+y_2,x_3+y_3]_{A_1 \oplus A_2}&=[x_1,x_2,x_3]_1 + [y_1,y_2,y_3]_2.\label{eq:poisson-direct-sum2}
		\end{align}
		Then $(A_1 \oplus A_2,\cdot_{A_1 \oplus A_2}, [\cdot,\cdot,\cdot]_{A_1 \oplus A_2})$ is a Poisson 3-Lie algebra.
	\end{pro}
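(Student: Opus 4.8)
The plan is to verify directly the three defining conditions of a Poisson 3-Lie algebra for the triple $(A_1 \oplus A_2, \cdot_{A_1 \oplus A_2}, [\cdot,\cdot,\cdot]_{A_1 \oplus A_2})$, exploiting the crucial structural feature that both operations in \eqref{eq:poisson-direct-sum1} and \eqref{eq:poisson-direct-sum2} are ``diagonal'': there are no mixed products between elements of $A_1$ and elements of $A_2$. Concretely, for $x\in A_1$ and $y\in A_2$ one has $x \cdot_{A_1 \oplus A_2} y = 0$, and any bracket with arguments drawn from both summands vanishes. Because of this, every axiom will split into its $A_1$-component and its $A_2$-component, each of which holds by the corresponding axiom for $(A_1,\cdot_1,[\cdot,\cdot,\cdot]_1)$ or $(A_2,\cdot_2,[\cdot,\cdot,\cdot]_2)$.

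First I would check that $(A_1 \oplus A_2, \cdot_{A_1 \oplus A_2})$ is a commutative associative algebra. Writing elements as $x_i + y_i$ with $x_i\in A_1$, $y_i\in A_2$ and applying \eqref{eq:poisson-direct-sum1}, commutativity reduces immediately to the commutativity of $\cdot_1$ and $\cdot_2$; for associativity, expanding $((x_1+y_1)\cdot_{A_1 \oplus A_2}(x_2+y_2))\cdot_{A_1 \oplus A_2}(x_3+y_3)$ yields $x_1\cdot_1 x_2\cdot_1 x_3 + y_1\cdot_2 y_2\cdot_2 y_3$ with no cross terms, and the same for the other bracketing, so the two agree. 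Next I would treat the 3-Lie structure: skew-symmetry of $[\cdot,\cdot,\cdot]_{A_1 \oplus A_2}$ follows componentwise from \eqref{eq:poisson-direct-sum2} and the skew-symmetry of $[\cdot,\cdot,\cdot]_1$ and $[\cdot,\cdot,\cdot]_2$, while for the Filippov-Jacobi identity \eqref{Jac} I would substitute $x_j = u_j + v_j$ into both sides; each side decomposes as its $A_1$-part plus its $A_2$-part, and these match by \eqref{Jac} applied in $A_1$ and in $A_2$ respectively.

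Finally I would verify the generalized Leibniz rule \eqref{eq:poisson-alg}. Taking $w = w_1 + w_2$, $x = x_1 + x_2$, $y = y_1 + y_2$, $z = z_1 + z_2$ and using \eqref{eq:poisson-direct-sum1} and \eqref{eq:poisson-direct-sum2}, the left-hand side equals $[w_1\cdot_1 x_1, y_1, z_1]_1 + [w_2\cdot_2 x_2, y_2, z_2]_2$ and the right-hand side splits correspondingly; the two sides then coincide because \eqref{eq:poisson-alg} holds separately in $A_1$ and in $A_2$. The computation is entirely routine, and the only point that genuinely requires care, rather than being a real obstacle, is confirming at each stage that the cross terms (products or brackets mixing the two summands) vanish, which is precisely what guarantees the clean decoupling into the two component identities.
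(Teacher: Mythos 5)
Your proposal is correct and takes essentially the same route as the paper's proof: a direct componentwise verification that commutativity, associativity, skew-symmetry, the Filippov--Jacobi identity \eqref{Jac}, and the generalized Leibniz rule \eqref{eq:poisson-alg} each split into their $A_1$- and $A_2$-parts with all cross terms vanishing by the diagonal definitions \eqref{eq:poisson-direct-sum1} and \eqref{eq:poisson-direct-sum2}. No gap to report.
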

	\begin{proof}
		For all $x_i\in A_1, y_i\in A_2, 1\leq i\leq5$,  we have 
		\begin{eqnarray*}
			(x_1+y_1)\cdot_{A_1 \oplus A_2}(x_2+y_2)&\overset{\eqref{eq:poisson-direct-sum1}}{=}&x_1\cdot_1 x_2+y_1\cdot_2 y_2
			\\&=&x_2\cdot_1 x_1+y_2\cdot_2 y_1\\
			&\overset{\eqref{eq:poisson-direct-sum1}}{=}&(x_2+y_2)\cdot_{A_1 \oplus A_2}(x_1+y_1),
		\end{eqnarray*} 
		and
		\begin{eqnarray*}
			&&\big((x_1+y_1)\cdot_{A_1 \oplus A_2}(x_2+y_2)\big)\cdot_{A_1 \oplus A_2}(x_3+y_3)
			\\&\overset{\eqref{eq:poisson-direct-sum1}}{=}&(x_1\cdot_1 x_2+y_1\cdot_2 y_2)\cdot_{A_1 \oplus A_2}(x_3+y_3)
			\\&\overset{\eqref{eq:poisson-direct-sum1}}{=}&x_1\cdot_1 x_2\cdot_1 x_3+y_1\cdot_2 y_2\cdot_2 y_3
			\\&\overset{\eqref{eq:poisson-direct-sum1}}{=}&(x_1+y_1)\cdot_{A_1 \oplus A_2}(x_2\cdot_1 x_3+y_2\cdot_2 y_3)
			\\&\overset{\eqref{eq:poisson-direct-sum1}}{=}&(x_1+y_1)\cdot_{A_1 \oplus A_2}\big((x_2+y_2)\cdot_{A_1 \oplus A_2}(x_3+y_3)\big).
		\end{eqnarray*} 
		Therefore $(A_1 \oplus A_2,\cdot_{A_1 \oplus A_2})$ is a commutative associative algebra.  Since the linear maps $[\cdot,\cdot,\cdot]_1$ and $[\cdot,\cdot,\cdot]_2$ are skew-symmetric, the linear map $[\cdot,\cdot,\cdot]_{A_1 \oplus A_2}$ is also skew-symmetric. By \eqref{Jac} and \eqref{eq:poisson-direct-sum2} we have
		\begin{eqnarray*}
			&&[[x_1+y_1,x_2+y_2,x_3+y_3]_{A_1 \oplus A_2},x_4+y_4,x_5+y_5]_{A_1 \oplus A_2}
			\\&\overset{\eqref{eq:poisson-direct-sum2}}{=}&[[x_1,x_2,x_3]_1 + [y_1,y_2,y_3]_2,x_4+y_4,x_5+y_5]_{A_1 \oplus A_2}
			\\&\overset{\eqref{eq:poisson-direct-sum2}}{=}&[[x_1,x_2,x_3]_1 ,x_4,x_5]_1+[[y_1,y_2,y_3]_2,y_4,y_5]_2
			\\&\overset{\eqref{Jac}}{=}&[[x_1,x_4,x_5]_1 ,x_2,x_3]_1+[x_1,[x_2,x_4,x_5]_1 ,x_3]_1+[x_1,x_2,[x_3,x_4,x_5]_1 ]_1
			\\&&+[[y_1,y_4,y_5]_2,y_2,y_3]_2+[y_1,[y_2,y_4,y_5]_2,y_3]_2+[y_1,y_2,[y_3,y_4,y_5]_2]_2
			\\&\overset{\eqref{eq:poisson-direct-sum2}}{=}&[[x_1,x_4,x_5]_1+[y_1,y_4,y_5]_2,x_2+y_2,x_3+y_3]_{A_1 \oplus A_2}
			\\&&+[x_1+y_1,[x_2,x_4,x_5]_1+[y_2,y_4,y_5]_2,x_3+y_3]_{A_1 \oplus A_2}
			\\&&+[x_1+y_1,x_2+y_2,[x_3,x_4,x_5]_1+[y_3,y_4,y_5]_2]_{A_1 \oplus A_2}
			\\&\overset{\eqref{eq:poisson-direct-sum2}}{=}&[[x_1+y_1,x_4+y_4,x_5+y_5]_{A_1 \oplus A_2},x_2+y_2,x_3+y_3]_{A_1 \oplus A_2}
			\\&&+[x_1+y_1,[x_2+y_2,x_4+y_4,x_5+y_5]_{A_1 \oplus A_2},x_3+y_3]_{A_1 \oplus A_2}
			\\&&+[x_1+y_1,x_2+y_2,[x_3+y_3,x_4+y_4,x_5+y_5]_{A_1 \oplus A_2}]_{A_1 \oplus A_2}.
		\end{eqnarray*} 	
		Hence $(A_1 \oplus A_2,[\cdot,\cdot,\cdot]_{A_1 \oplus A_2})$ is a 3-Lie algebra. Furthermore, we have
		\begin{eqnarray*}
			&&[(x_4+y_4)\cdot_{A_1 \oplus A_2}(x_1+y_1),x_2+y_2,x_3+y_3]_{A_1 \oplus A_2}\\
			&\overset{\eqref{eq:poisson-direct-sum1}}{=}&[x_4\cdot_1 x_1+y_4\cdot_2 y_1,x_2+y_2,x_3+y_3]_{A_1 \oplus A_2}\\
			&=&[x_4\cdot_1 x_1,x_2,x_3]_1+[y_4\cdot_2 y_1,y_2,y_3]_2\\
			&\overset{\eqref{eq:poisson-alg}}{=}&x_4\cdot_1 [x_1,x_2,x_3]_1+x_1\cdot_1 [x_4,x_2,x_3]_1+y_4\cdot_2 [y_1,y_2,y_3]_2+y_1\cdot_2 [y_4,y_2,y_3]_2\\
			&\overset{\eqref{eq:poisson-direct-sum1}}{=}&(x_1+y_1)\cdot_{A_1 \oplus A_2}([x_4,x_2,x_3]_1+[y_4,y_2,y_3]_2)\\
			&&+(x_4+y_4)\cdot_{A_1 \oplus A_2}([x_1,x_2,x_3]_1+[y_1,y_2,y_3]_2)\\
			&\overset{\eqref{eq:poisson-direct-sum2}}{=}&(x_1+y_1)\cdot_{A_1 \oplus A_2}[x_4+y_4,x_2+y_2,x_3+y_3]_{A_1 \oplus A_2}\\
			&&+(x_4+y_4)\cdot_{A_1 \oplus A_2}[x_1+y_1,x_2+y_2,x_3+y_3]_{A_1 \oplus A_2}.
		\end{eqnarray*}
		Therefore, $(A_1 \oplus A_2,\cdot_{A_1 \oplus A_2}, [\cdot,\cdot,\cdot]_{A_1 \oplus A_2})$ is a Poisson 3-Lie algebra.
	\end{proof}	
	In \cite{Bai2020}, the tensor product of two Poisson algebras $(A_1,\cdot_1,[\cdot,\cdot]_1)$ and $(A_2,\cdot_2,[\cdot,\cdot]_2)$ is a Poisson algebra $(A_1\otimes A_2,\cdot_{A_1\otimes A_2},[\cdot,\cdot]_{A_1\otimes A_2})$, where $\cdot_{A_1\otimes A_2}$ and $[\cdot,\cdot]_{A_1\oplus A_2}$ are defined by
	\begin{align}
		[x_1\otimes y_1,x_2\otimes y_2]_{A_1\otimes A_2}&=[x_1,x_2]_1 \otimes (y_1\cdot_2 y_2)+(x_1\cdot_1 x_2) \otimes [y_1, y_2]_2,\label{eq:direct_sum}\\
		(x_1\otimes y_1) \cdot_{A_1\oplus A_2} (x_2\otimes y_2) &= (x_1 \cdot_1 x_2) \otimes (y_1 \cdot_2 y_2), \quad \forall~ x_1,x_2\in A_1, y_1,y_2 \in A_2.
	\end{align}	
	If replace the Lie bracket in \eqref{eq:direct_sum} with 3-Lie bracket, then the tensor product of two Poisson 3-Lie algebras is not a 3-Lie algebra. However, the tensor product of a ternary $F$-manifold algebra and a commutative associative algebra is also a ternary $F$-manifold algebra \cite{Benhassine}. We  similarly  construct the tensor product of a Poisson 3-Lie algebra and a commutative associative algebra as follows.
	\begin{pro}\label{pro:poisson-tensor}
		Let $(A_1,\cdot_1,[\cdot,\cdot,\cdot]_1)$ be a Poisson 3-Lie algebra and $(A_2,\cdot_2)$ be a commutative associative algebra. For all $x_i\in A_1, y_i\in A_2, 1\leq i\leq3$, define linear maps $\cdot_{A_1\otimes A_2}:\otimes^2(A_1\otimes A_2)\rightarrow A_1\otimes A_2$ and $[\cdot,\cdot,\cdot]_{A_1\otimes A_2}:\otimes^3(A_1\otimes A_2)\rightarrow A_1\otimes A_2$ by
		\begin{align}
			(x_1\otimes y_1)\cdot_{A_1\otimes A_2}(x_2\otimes y_2)&=(x_1\cdot_1 x_2)\otimes (y_1\cdot_2 y_2),\label{eq:poisson-tensor-1}\\
			[x_1\otimes y_1,x_2\otimes y_2,x_3\otimes y_3]_{A_1\otimes A_2}&=[x_1,x_2,x_3]_1 \otimes (y_1\cdot_2 y_2\cdot_2 y_3).\label{eq:poisson-tensor-2}
		\end{align}
		Then $(A_1\otimes A_2, \cdot_{A_1\otimes A_2}, [\cdot,\cdot,\cdot]_{A_1\otimes A_2})$ is a Poisson 3-Lie algebra.
	\end{pro}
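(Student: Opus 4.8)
The plan is to verify the three defining axioms of a Poisson 3-Lie algebra for the triple $(A_1\otimes A_2, \cdot_{A_1\otimes A_2}, [\cdot,\cdot,\cdot]_{A_1\otimes A_2})$ by reducing each to the corresponding axiom in $A_1$, using the commutativity and associativity of $\cdot_2$ to collapse all products in the second tensor factor to a single symmetric normal form. Since every operation is defined on decomposable tensors and extended multilinearly, it suffices to check all identities on elements of the form $x_i\otimes y_i$. First I would record that $(A_1\otimes A_2, \cdot_{A_1\otimes A_2})$ is a commutative associative algebra: this is the standard tensor product of two commutative associative algebras, with commutativity immediate from $x_1\cdot_1 x_2=x_2\cdot_1 x_1$ and $y_1\cdot_2 y_2=y_2\cdot_2 y_1$, and associativity reducing to associativity in each factor.

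Next I would establish that $[\cdot,\cdot,\cdot]_{A_1\otimes A_2}$ is skew-symmetric and satisfies the Filippov-Jacobi identity \eqref{Jac}. Skew-symmetry is the key structural observation: interchanging two arguments $x_i\otimes y_i$ and $x_j\otimes y_j$ flips the sign of $[x_1,x_2,x_3]_1$ by skew-symmetry in $A_1$ but leaves the product $y_1\cdot_2 y_2\cdot_2 y_3$ unchanged by commutativity of $\cdot_2$, so the full bracket is skew-symmetric. For the Filippov-Jacobi identity I would expand both sides on decomposable tensors $u_i=x_i\otimes y_i$, $1\leq i\leq 5$. Each of the four resulting summands carries the same second-slot factor $y_1\cdot_2 y_2\cdot_2 y_3\cdot_2 y_4\cdot_2 y_5$ once commutativity and associativity of $\cdot_2$ are used to rearrange every product into this common form, so the identity collapses to \eqref{Jac} for $[\cdot,\cdot,\cdot]_1$ in the first tensor slot, which holds by hypothesis.

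Finally I would verify the generalized Leibniz rule \eqref{eq:poisson-alg}. Expanding $[(x_4\otimes y_4)\cdot_{A_1\otimes A_2}(x_1\otimes y_1), x_2\otimes y_2, x_3\otimes y_3]_{A_1\otimes A_2}$ yields $[x_4\cdot_1 x_1,x_2,x_3]_1$ tensored with $y_1\cdot_2 y_2\cdot_2 y_3\cdot_2 y_4$; applying the Leibniz rule for $A_1$ to the first factor, and observing that both $(x_4\otimes y_4)\cdot_{A_1\otimes A_2}[x_1\otimes y_1, x_2\otimes y_2, x_3\otimes y_3]_{A_1\otimes A_2}$ and $(x_1\otimes y_1)\cdot_{A_1\otimes A_2}[x_4\otimes y_4, x_2\otimes y_2, x_3\otimes y_3]_{A_1\otimes A_2}$ produce exactly the same second-slot product $y_1\cdot_2 y_2\cdot_2 y_3\cdot_2 y_4$, gives the desired equality.

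The computations are routine once this normal-form observation is in place, so I do not anticipate a genuine obstacle. The only point requiring care is bookkeeping: in every term the product in the second tensor factor must be rearranged to the common symmetric form before comparison, which is precisely where the commutativity and associativity of $\cdot_2$ (together with the absence of any bracket on $A_2$) are indispensable. This is also the conceptual reason the construction demands only a commutative associative structure on $A_2$ rather than a full Poisson 3-Lie structure.
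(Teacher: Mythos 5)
Your proposal is correct and takes essentially the same route as the paper's proof: verify every axiom on decomposable tensors, use commutativity and associativity of $\cdot_2$ to collapse each second-slot product to a common normal form, and thereby reduce the Filippov--Jacobi identity and the Leibniz rule to the corresponding identities in $A_1$. Your explicit observation that skew-symmetry of $[\cdot,\cdot,\cdot]_{A_1\otimes A_2}$ requires the commutativity of $\cdot_2$ in addition to skew-symmetry of $[\cdot,\cdot,\cdot]_1$ slightly sharpens a point the paper leaves implicit, but it does not alter the argument.
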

	\begin{proof}
		For all $x_i\in A_1, y_i\in A_2, 1\leq i\leq5$, we have
		\begin{eqnarray*}
			(x_1\otimes y_1)\cdot_{A_1\otimes A_2}(x_2\otimes y_2)&\overset{\eqref{eq:poisson-tensor-1}}{=}&(x_1\cdot_1 x_2)\otimes( y_1\cdot_2 y_2)\\
			&=&(x_2\cdot_1 x_1)\otimes (y_2\cdot_2 y_1)\\
			&\overset{\eqref{eq:poisson-tensor-1}}{=}&(x_2\otimes y_2)\cdot_{A_1\otimes A_2}(x_1\otimes y_1),
		\end{eqnarray*} 
		and
		\begin{eqnarray*}
			&&\big((x_1\otimes y_1)\cdot_{A_1\otimes A_2}(x_2\otimes y_2)\big)\cdot_{A_1\otimes A_2}(x_3\otimes y_3)\\
			&\overset{\eqref{eq:poisson-tensor-1}}{=}&\big((x_1\cdot_1 x_2)\otimes (y_1\cdot_2 y_2)\big)\cdot_{A_1\otimes A_2}(x_3\otimes y_3)
			\\&\overset{\eqref{eq:poisson-tensor-1}}{=}&(x_1\cdot_1 x_2\cdot_1 x_3)\otimes (y_1\cdot_2 y_2\cdot_2 y_3)
			\\&\overset{\eqref{eq:poisson-tensor-1}}{=}&(x_1\otimes y_1)\cdot_{A_1\otimes A_2}\big((x_2\cdot_1 x_3)\otimes (y_2\cdot_2 y_3)\big)
			\\&\overset{\eqref{eq:poisson-tensor-1}}{=}&(x_1\otimes y_1)\cdot_{A_1\otimes A_2}\big((x_2\otimes y_2)\cdot_{A_1\otimes A_2}(x_3\otimes y_3)\big).
		\end{eqnarray*}	
		Therefore $(A_1\otimes A_2,\cdot_{A_1\otimes A_2})$ is a commutative associative algebra. Since the linear map $[\cdot,\cdot,\cdot]_1$ is skew-symmetric, the linear map $[\cdot,\cdot,\cdot]_{A_1\otimes A_2}$ is also skew-symmetric. Moreover, we have
		\begin{eqnarray*}
			&&[[x_1\otimes y_1,x_2\otimes y_2,x_3\otimes y_3]_{A_1\otimes A_2},x_4\otimes y_4,x_5\otimes y_5]_{A_1\otimes A_2}
			\\&\overset{\eqref{eq:poisson-tensor-2}}{=}&[[x_1,x_2,x_3]_1 \otimes (y_1\cdot_2 y_2\cdot_2 y_3),x_4\otimes y_4,x_5\otimes y_5]_{A_1\otimes A_2}
			\\&\overset{\eqref{eq:poisson-tensor-2}}{=}&[[x_1,x_2,x_3]_1 ,x_4,x_5]_1\otimes (y_1\cdot_2 y_2\cdot_2 y_3\cdot_2 y_4\cdot_2 y_5)
			\\&\overset{\eqref{Jac}}{=}&[[x_1,x_4,x_5]_1 ,x_2,x_3]_1\otimes(y_1\cdot_2 y_2\cdot_2 y_3\cdot_2 y_4\cdot_2 y_5)
			\\&&+[x_1,[x_2,x_4,x_5]_1 ,x_3]_1\otimes(y_1\cdot_2 y_2\cdot_2 y_3\cdot_2 y_4\cdot_2 y_5)
			\\&&+[x_1,x_2,[x_3,x_4,x_5]_1 ]_1\otimes(y_1\cdot_2 y_2\cdot_2 y_3\cdot_2 y_4\cdot_2 y_5)
			\\&\overset{\eqref{eq:poisson-tensor-2}}{=}&[[x_1,x_4,x_5]_1\otimes y_1\cdot_2 y_4\cdot_2 y_5,x_2\otimes y_2,x_3\otimes y_3]_{A_1\otimes A_2}
			\\&&+[x_1\otimes y_1,[x_2,x_4,x_5]_1\otimes y_2\cdot_2 y_4\cdot_2 y_5,x_3\otimes y_3]_{A_1\otimes A_2}
			\\&&+[x_1\otimes y_1,x_2\otimes y_2,[x_3,x_4,x_5]_1\otimes y_3\cdot_2 y_4\cdot_2 y_5]_{A_1\otimes A_2}
			\\&\overset{\eqref{eq:poisson-tensor-2}}{=}&[[x_1\otimes y_1,x_4\otimes y_4,x_5\otimes y_5]_{A_1\otimes A_2},x_2\otimes y_2,x_3\otimes y_3]_{A_1\otimes A_2}
			\\&&+[x_1\otimes y_1,[x_2\otimes y_2,x_4\otimes y_4,x_5\otimes y_5]_{A_1\otimes A_2},x_3\otimes y_3]_{A_1\otimes A_2}
			\\&&+[x_1\otimes y_1,x_2\otimes y_2,[x_3\otimes y_3,x_4\otimes y_4,x_5\otimes y_5]_{A_1\otimes A_2}]_{A_1\otimes A_2}.
		\end{eqnarray*}
		Hence  $(A_1\otimes A_2,[\cdot,\cdot,\cdot]_{A_1\otimes A_2})$ is a 3-Lie algebra. Furthermore, we have
		\begin{eqnarray*}
			&&[(x_4\otimes y_4)\cdot_{A_1\otimes A_2}(x_1\otimes y_1),x_2\otimes y_2,x_3\otimes y_3]_{A_1\otimes A_2}\\
			&\overset{\eqref{eq:poisson-tensor-1}}{=}&[(x_4\cdot_1 x_1)\otimes( y_4\cdot_2 y_1),x_2\otimes y_2,x_3\otimes y_3]_{A_1\otimes A_2}\\
			&\overset{\eqref{eq:poisson-tensor-2}}{=}&[x_4\cdot_1 x_1,x_2,x_3]_1\otimes \big((y_4\cdot_2 y_1)\cdot_2 y_2\cdot_2 y_3\big)\\
			&\overset{\eqref{eq:poisson-alg}}{=}&\left(x_4\cdot_1 [x_1,x_2,x_3]_1+x_1\cdot_1[x_4,x_2,x_3]_1\right)\otimes(y_1\cdot_2 y_2\cdot_2 y_3\cdot_2 y_4)\\
			&\overset{\eqref{eq:poisson-tensor-1}}{=}&(x_4\cdot_1[x_1,x_2,x_3]_1)\otimes \big(y_4\cdot_2(y_1\cdot_2 y_2\cdot_2 y_3)\big)+x_1\cdot_1[x_4,x_2,x_3]_1\otimes \big(y_1\cdot_2(y_4\cdot_2 y_2\cdot_2 y_3)\big)\\	
			&\overset{\eqref{eq:poisson-tensor-2}}{=}&(x_4\otimes y_4)\cdot_{A_1\otimes A_2}[x_1\otimes y_1,x_2\otimes y_2,x_3\otimes y_3]_{A_1\otimes A_2}\\
			&&+(x_1\otimes y_1)\cdot_{A_1\otimes A_2}[x_4\otimes y_4,x_2\otimes y_2,x_3\otimes y_3]_{A_1\otimes A_2}.
		\end{eqnarray*}
		Therefore,	$(A_1\otimes A_2,\cdot_{A_1\otimes A_2},[\cdot,\cdot,\cdot]_{A_1\otimes A_2})$ is a Poisson 3-Lie algebra. 
	\end{proof}
	\subsection{Direct sum and tensor product of transposed Poisson 3-Lie algebras}
	\begin{pro}
		Let $(B_1,\cdot_1, [\cdot,\cdot,\cdot]_1)$ and $(B_2,\cdot_2, [\cdot,\cdot,\cdot]_2)$ be two transposed Poisson 3-Lie algebras. For all $x_i\in B_1, y_i\in B_2, 1\leq i\leq3$, define linear maps $\cdot_{B_1 \oplus B_2}:\otimes^2(B_1 \oplus B_2)\rightarrow B_1 \oplus B_2$ and $[\cdot,\cdot,\cdot]_{B_1 \oplus B_2}:\otimes^3(B_1 \oplus B_2)\rightarrow B_1 \oplus B_2$ by
		\begin{align}
			(x_1+y_1)\cdot_{B_1 \oplus B_2}(x_2+y_2)&=x_1\cdot_1 x_2+y_1\cdot_2 y_2,\label{direct-sum1} \\ 
			[x_1+y_1,x_2+y_2,x_3+y_3]_{B_1 \oplus B_2}&=[x_1,x_2,x_3]_1 + [y_1,y_2,y_3]_2.\label{direct-sum2}
		\end{align}
		Then $(B_1 \oplus B_2,\cdot_{B_1 \oplus B_2}, [\cdot,\cdot,\cdot]_{B_1 \oplus B_2})$ is a transposed Poisson 3-Lie algebra.
	\end{pro}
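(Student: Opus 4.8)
The plan is to follow the same three-part template as in the proof of Proposition \ref{pro:poisson-direct-sum}: first verify that $(B_1\oplus B_2,\cdot_{B_1\oplus B_2})$ is a commutative associative algebra, then that $(B_1\oplus B_2,[\cdot,\cdot,\cdot]_{B_1\oplus B_2})$ is a 3-Lie algebra, and finally that the transposed Leibniz rule \eqref{eq:trans-poisson-alg} holds. Since the definitions \eqref{direct-sum1} and \eqref{direct-sum2} coincide verbatim with \eqref{eq:poisson-direct-sum1} and \eqref{eq:poisson-direct-sum2}, and neither the commutativity and associativity of $\cdot_{B_1\oplus B_2}$ nor the skew-symmetry and Filippov--Jacobi identity \eqref{Jac} of $[\cdot,\cdot,\cdot]_{B_1\oplus B_2}$ involves any compatibility condition between the two operations, the first two parts are word-for-word identical to the corresponding computations in the proof of Proposition \ref{pro:poisson-direct-sum}. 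I would simply reproduce them, using that $\cdot_1,\cdot_2$ are commutative associative and that $[\cdot,\cdot,\cdot]_1,[\cdot,\cdot,\cdot]_2$ satisfy \eqref{Jac}.

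The only genuinely new step is the transposed Leibniz rule. Here I would take $w=x_4+y_4$ together with the three arguments $x_i+y_i$ for $i=1,2,3$ and compute both sides directly. Because $\cdot_{B_1\oplus B_2}$ and $[\cdot,\cdot,\cdot]_{B_1\oplus B_2}$ have no mixed terms between $B_1$ and $B_2$, the left-hand side $3(x_4+y_4)\cdot_{B_1\oplus B_2}[x_1+y_1,x_2+y_2,x_3+y_3]_{B_1\oplus B_2}$ collapses to $3\big(x_4\cdot_1[x_1,x_2,x_3]_1+y_4\cdot_2[y_1,y_2,y_3]_2\big)$, and similarly each of the three terms on the right-hand side splits along $B_1\oplus B_2$. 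Applying \eqref{eq:trans-poisson-alg} inside $B_1$ and inside $B_2$ then shows that the right-hand side equals the same expression, which finishes the verification.

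I expect no real obstacle: the diagonal nature of the construction, with no interaction between the two summands, guarantees that every identity reduces componentwise, and the only substantive input is that each factor $(B_i,\cdot_i,[\cdot,\cdot,\cdot]_i)$ already satisfies \eqref{eq:trans-poisson-alg}. Thus the argument is entirely parallel to that of Proposition \ref{pro:poisson-direct-sum}, the single change being that the Leibniz-rule computation is replaced by the analogous transposed-Leibniz-rule computation.
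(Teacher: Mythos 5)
Your proposal is correct and follows exactly the paper's own proof: the paper likewise invokes the proof of Proposition \ref{pro:poisson-direct-sum} for the commutative associative and 3-Lie structures, and then verifies the transposed Leibniz rule \eqref{eq:trans-poisson-alg} by the same componentwise computation with $w=x_4+y_4$. No gaps.
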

	\begin{proof}
		By the proof of Proposition \ref{pro:poisson-direct-sum}, we have that $(B_1 \oplus B_2,\cdot_{B_1 \oplus B_2})$ is a commutative associative algebra and $(B_1 \oplus B_2,[\cdot,\cdot,\cdot]_{B_1 \oplus B_2})$ is a 3-Lie algebra.	Now we  prove  that \eqref{eq:trans-poisson-alg} holds. For all $x_i\in B_1, y_i\in B_2, 1\leq i\leq4$, we have
		\begin{eqnarray*}
			&&3(x_4+y_4)\cdot_{B_1 \oplus B_2}[x_1+y_1,x_2+y_2,x_3+y_3]_{B_1 \oplus B_2}\\
			&\overset{\eqref{direct-sum2}}{=}&3(x_4+y_4)\cdot_{B_1 \oplus B_2}([x_1,x_2,x_3]_1+[y_1,y_2,y_3]_2)\\
			&\overset{\eqref{direct-sum1}}{=}&3x_4\cdot_1[x_1,x_2,x_3]_1+3y_4\cdot_2[y_1,y_2,y_3]_2\\
			&\overset{\eqref{eq:trans-poisson-alg}}{=}&[x_4\cdot_1 x_1,x_2,x_3]_1+[x_1,x_4\cdot_1 x_2,x_3]_1+[x_1,x_2,x_4\cdot_1 x_3]_1+[y_4\cdot_2 y_1,y_2,y_3]_2\\
			&&+[y_1,y_4\cdot_2 y_2,y_3]_2+[y_1,y_2,y_4\cdot_2 y_3]_2\\
			&\overset{\eqref{direct-sum2}}{=}&[x_4\cdot_1 x_1+y_4\cdot_2 y_1,x_2+y_2,x_3+y_3]_{B_1 \oplus B_2}+[x_1+y_1,x_4\cdot_1 x_2+y_4\cdot_2 y_2,x_3+y_3]_{B_1 \oplus B_2}\\
			&&+[x_1+y_1,x_2+y_2,x_4\cdot_1 x_3+y_4\cdot_2 y_3]_{B_1 \oplus B_2}\\
			&\overset{\eqref{direct-sum1}}{=}&[(x_4+y_4)\cdot_{B_1 \oplus B_2}(x_1+y_1),x_2+y_2,x_3+y_3]_{B_1 \oplus B_2}\\
			&&+[x_1+y_1,(x_4+y_4)\cdot_{B_1 \oplus B_2}(x_2+y_2),x_3+y_3]_{B_1 \oplus B_2}\\
			&&+[x_1+y_1,x_2+y_2,(x_4+y_4)\cdot_{B_1 \oplus B_2}(x_3+y_3)]_{B_1 \oplus B_2}.
		\end{eqnarray*}
		Therefore $(B_1 \oplus B_2,\cdot_{B_1 \oplus B_2}, [\cdot,\cdot,\cdot]_{B_1 \oplus B_2})$ is a transposed Poisson 3-Lie algebra. 
	\end{proof}
	\begin{pro}
		Let $(B_1,\cdot_1,[\cdot,\cdot,\cdot]_1)$ be a transposed Poisson 3-Lie algebra and $(B_2,\cdot_2)$ be a commutative associative algebra. For all $x_i\in B_1, y_i\in B_2, 1\leq i\leq3$, define linear maps $\cdot_{B_1 \otimes B_2}:\otimes^2(B_1 \otimes B_2)\rightarrow B_1 \otimes B_2$ and $[\cdot,\cdot,\cdot]_{B_1 \otimes B_2}:\otimes^3(B_1 \otimes B_2)\rightarrow B_1 \otimes B_2$ by
		\begin{align}
			(x_1\otimes y_1)\cdot_{B_1 \otimes B_2}(x_2\otimes y_2)&=(x_1\cdot_1 x_2)\otimes (y_1\cdot_2 y_2),\label{tensor-1}\\
			[x_1\otimes y_1,x_2\otimes y_2,x_3\otimes y_3]_{B_1 \otimes B_2}&=[x_1,x_2,x_3]_1 \otimes (y_1\cdot_2 y_2\cdot_2 y_3).\label{tensor-2}
		\end{align}
		Then $(B_1 \otimes B_2,\cdot_{B_1 \otimes B_2},[\cdot,\cdot,\cdot]_{B_1 \otimes B_2})$ is a transposed Poisson 3-Lie algebra.
	\end{pro}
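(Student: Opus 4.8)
The plan is to follow exactly the three-part structure of the proof of Proposition~\ref{pro:poisson-tensor}, since the definitions \eqref{tensor-1} and \eqref{tensor-2} coincide verbatim with \eqref{eq:poisson-tensor-1} and \eqref{eq:poisson-tensor-2}. First I would note that the verification that $(B_1 \otimes B_2,\cdot_{B_1 \otimes B_2})$ is a commutative associative algebra, and that $(B_1 \otimes B_2,[\cdot,\cdot,\cdot]_{B_1 \otimes B_2})$ is a 3-Lie algebra, uses only the commutative associative structure of $B_2$ together with the 3-Lie structure of $B_1$; the generalized Leibniz rule of $B_1$ plays no role in either computation. In particular the skew-symmetry of $[\cdot,\cdot,\cdot]_{B_1 \otimes B_2}$ is inherited directly from that of $[\cdot,\cdot,\cdot]_1$, and the Filippov--Jacobi identity \eqref{Jac} for the tensor bracket follows precisely as in Proposition~\ref{pro:poisson-tensor} because the $B_2$-factors multiply to the single symmetric product $y_1\cdot_2 y_2\cdot_2 y_3\cdot_2 y_4\cdot_2 y_5$ on both sides. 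I would therefore cite that proof for these two parts rather than reproduce the calculation.

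The only genuinely new step is verifying the transposed Leibniz rule \eqref{eq:trans-poisson-alg}. Here I would expand the left-hand side using \eqref{tensor-1} and \eqref{tensor-2}, and then apply the transposed Leibniz rule \eqref{eq:trans-poisson-alg} on $B_1$:
\begin{align*}
	3(x_4\otimes y_4)\cdot_{B_1 \otimes B_2}[x_1\otimes y_1,x_2\otimes y_2,x_3\otimes y_3]_{B_1 \otimes B_2}
	&= 3\big(x_4\cdot_1[x_1,x_2,x_3]_1\big)\otimes(y_4\cdot_2 y_1\cdot_2 y_2\cdot_2 y_3)\\
	&= \big([x_4\cdot_1 x_1,x_2,x_3]_1+[x_1,x_4\cdot_1 x_2,x_3]_1+[x_1,x_2,x_4\cdot_1 x_3]_1\big)\\
	&\quad\otimes(y_4\cdot_2 y_1\cdot_2 y_2\cdot_2 y_3).
\end{align*}
For the right-hand side I would compute each of the three bracket terms separately; for instance $[(x_4\otimes y_4)\cdot_{B_1 \otimes B_2}(x_1\otimes y_1),x_2\otimes y_2,x_3\otimes y_3]_{B_1 \otimes B_2}$ reduces by \eqref{tensor-1} and \eqref{tensor-2} to $[x_4\cdot_1 x_1,x_2,x_3]_1\otimes(y_4\cdot_2 y_1\cdot_2 y_2\cdot_2 y_3)$, and the other two terms are entirely analogous.

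The one point requiring care --- and the only place the argument could fail if the hypotheses were weakened --- is the bookkeeping of the $B_2$-factors. After inserting $y_4$ into each of the three slots on the right-hand side, the resulting second tensor factor is $y_4\cdot_2 y_1\cdot_2 y_2\cdot_2 y_3$ in \emph{every} term, independently of which slot receives it. This is exactly where the commutativity and associativity of $\cdot_2$ are used: they guarantee that the three terms share a common $B_2$-component, which I can then factor out so that the identity collapses to the transposed Leibniz rule on $B_1$ alone. Summing the three terms thus reproduces the displayed right-hand side above, completing the verification and showing that $(B_1 \otimes B_2,\cdot_{B_1 \otimes B_2},[\cdot,\cdot,\cdot]_{B_1 \otimes B_2})$ is a transposed Poisson 3-Lie algebra.
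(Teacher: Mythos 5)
Your proposal is correct and follows essentially the same route as the paper: the paper likewise cites the proof of Proposition~\ref{pro:poisson-tensor} for the commutative associative and 3-Lie algebra structures and then verifies \eqref{eq:trans-poisson-alg} directly, expanding via \eqref{tensor-1} and \eqref{tensor-2} and applying the transposed Leibniz rule on $B_1$, with commutativity and associativity of $\cdot_2$ collapsing all $B_2$-factors to the common term $y_1\cdot_2 y_2\cdot_2 y_3\cdot_2 y_4$. Your explicit remark about why the three right-hand-side terms share one $B_2$-component is exactly the implicit bookkeeping in the paper's computation, so no gap remains.
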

	\begin{proof}
		By the proof of Proposition \ref{pro:poisson-tensor}, we have $(B_1 \otimes B_2,\cdot_{B_1 \otimes B_2})$ is a commutative associative algebra and $(B_1 \otimes B_2,[\cdot,\cdot,\cdot]_{B_1 \otimes B_2})$ is a 3-Lie algebra. Next we prove that \eqref{eq:trans-poisson-alg} holds. For all $x_i\in A, y_i\in B, 1\leq i\leq4$, we have
		\begin{eqnarray*}
			&&3(x_4\otimes y_4)\cdot_{B_1 \otimes B_2}[x_1\otimes y_1,x_2\otimes y_2,x_3\otimes y_3]_{B_1 \otimes B_2}\\
			&\overset{\eqref{tensor-2}}{=}&3(x_4\otimes y_4)\cdot_{B_1 \otimes B_2}\big([x_1,x_2,x_3]_1\otimes(y_1\cdot_2 y_2\cdot_2 y_3)\big)\\
			&\overset{\eqref{tensor-1}}{=}&3(x_4\cdot_{A}[x_1,x_2,x_3]_1)\otimes \big(y_4\cdot_{B}(y_1\cdot_2 y_2\cdot_2 y_3)\big)\\
			&\overset{\eqref{eq:trans-poisson-alg}}{=}&\left([x_4\cdot_1 x_1,x_2,x_3]_1+[x_1,x_4\cdot_1 x_2,x_3]_1+[x_1,x_2,x_4\cdot_1 x_3]_1\right)\otimes(y_1\cdot_2 y_2\cdot_2 y_3\cdot_2 y_4)\\
			&\overset{\eqref{tensor-2}}{=}&[(x_4\cdot_1 x_1)\otimes (y_4\cdot_2 y_1),x_2\otimes y_2,x_3\otimes y_3]_{B_1 \otimes B_2}\\
			&&+[x_1\otimes y_1,(x_4\cdot_1 x_2)\otimes (y_4\cdot_2 y_2),x_3\otimes y_3]_{B_1 \otimes B_2}\\
			&&+[x_1\otimes y_1,x_2\otimes y_2,(x_4\cdot_1 x_3)\otimes (y_4\cdot_2 y_3)]_{B_1 \otimes B_2}\\
			&\overset{\eqref{tensor-1}}{=}&[(x_4\otimes y_4)\cdot_{B_1 \otimes B_2}(x_1\otimes y_1),x_2\otimes y_2,x_3\otimes y_3]_{B_1 \otimes B_2}\\
			&&+[x_1\otimes y_1,(x_4\otimes y_4)\cdot_{B_1 \otimes B_2}(x_2\otimes y_2),x_3\otimes y_3]_{B_1 \otimes B_2}\\
			&&+[x_1\otimes y_1,x_2\otimes y_2,(x_4\otimes y_4)\cdot_{B_1 \otimes B_2}(x_3\otimes y_3)]_{B_1 \otimes B_2}.
		\end{eqnarray*}
		Therefore, $(B_1 \otimes B_2,\cdot_{B_1 \otimes B_2},[\cdot,\cdot,\cdot]_{B_1 \otimes B_2})$ is a transposed Poisson 3-Lie algebra. 
	\end{proof}
	\section{Double construction Poisson 3-Lie bialgebras}\label{sec:double-Poisson}
	In this section, we first introduce the representations and dual representations of Poisson 3-Lie algebras, construct the semi-direct product Poisson 3-Lie algebras (Proposition \ref{pro:possion-semi-direct}). 
	Then we define matched pairs and Manin triples of Poisson 3-Lie algebras.  We also discuss the double construction Poisson 3-Lie bialgebras and establish equivalences between these structures (Theorem \ref{thm:one-one-co}). 
	\subsection{Representations of Poisson 3-Lie algebras}\label{subsec:41}
	Recall \cite{X. Ni}, a representation of Poisson algebra $(P, \cdot, [\cdot,\cdot])$ is a triple $(V, \rho, \mu)$, where $(V, \rho)$ and $(V, \mu)$ are respectively representations of Lie algebra $(P, [\cdot,\cdot])$ and commutative associative algebra $(P,\cdot)$ that satisfy
	\begin{align*}
		\rho(x\cdot y)&=\mu(y)\rho(x)+\mu(x)\rho(y),\\
		\mu([x,y])&=\rho(x)\mu(y)-\mu(y)\rho(x), \quad\forall~ x, y\in P.
	\end{align*}
	Replace the above representation of Lie algebra with the representation of 3-Lie algebra. By \eqref{eq:poisson-alg}, we give the representation of the Poisson 3-Lie algebra.
	\begin{defi}
		A \textbf{representation of Poisson 3-Lie algebra} $(A,\cdot, [\cdot,\cdot,\cdot])$ is a triple $(V,\rho,\mu)$, where $(V,\mu)$ is a representation of the commutative associative algebra $(A,\cdot)$ and $(V,\rho)$ is a representation of the 3-Lie algebra $(A, [\cdot,\cdot,\cdot])$ such that
		\begin{align}
			\rho(x\cdot y,z)&=\mu(x)\rho(y,z)+\mu(y)\rho(x,z), \label{rep-1} \\
			\rho(x,y)\mu(z)&=\mu([x,y,z])+\mu(z)\rho(x,y), \quad \forall~ x,y,z\in A. \label{rep-2}
		\end{align}
	\end{defi}	
	\begin{rmk}
		Ncib provides the representation of $n$-ary Hom-Nambu Poisson superalgebras \cite{O. Ncib}. The representations of a Poisson 3-Lie algebras are a special case when $n=3$.
	\end{rmk}
	Let $(A,\cdot, [\cdot,\cdot,\cdot])$ be a Poisson 3-Lie algebra, $(A,\mathrm{ad})$ and $(A,\mathcal{L})$ be the adjoint representations of $(A, [\cdot,\cdot,\cdot])$ and $(A,\cdot)$, respectively. Then $(A,\mathrm{ad},\mathcal{L})$ is a representation of $(A,\cdot, [\cdot,\cdot,\cdot])$, which is called the \textbf{adjoint representation of Poisson 3-Lie algebras}.

	\begin{pro}\label{pro:poisson-dual}
		Let $(A,\cdot, [\cdot,\cdot,\cdot])$ be a Poisson 3-Lie algebra and $(V,\rho,\mu)$ be a representation of $(A,\cdot, [\cdot,\cdot,\cdot])$. Then $(V^*,\rho^*,-\mu^*)$ is a representation of $(A,\cdot, [\cdot,\cdot,\cdot])$. This representation is called the \textbf{dual representation} of $(V,\rho,\mu)$. In particular,  $(A^*,\mathrm{ad}^*,-\mathcal{L}^*)$ is a representation of $(A,\cdot, [\cdot,\cdot,\cdot])$, which is called the \textbf{coadjoint representation of Poisson 3-Lie algebras}. 
	\end{pro}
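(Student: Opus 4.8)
The plan is to reduce the statement to the two dualities already recorded in the excerpt, leaving only the two mixed compatibility conditions \eqref{rep-1} and \eqref{rep-2} to be checked for the pair $(\rho^*,-\mu^*)$. First I would invoke Proposition \ref{pro:3-lie-dual-rep} to conclude that $(V^*,\rho^*)$ is a representation of the 3-Lie algebra $(A,[\cdot,\cdot,\cdot])$, and Proposition \ref{pro:com-asso-dual-rep} to conclude that $(V^*,-\mu^*)$ is a representation of the commutative associative algebra $(A,\cdot)$. These dispose of the two ``pure'' axioms, so only the cross relations remain, and the whole argument is carried out by pairing against an arbitrary $v\in V$ and using the defining formulas \eqref{3-lie-dual} and \eqref{com-asso-dual}.

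Next I would verify \eqref{rep-2} for $(\rho^*,-\mu^*)$, namely $\rho^*(x,y)\bigl(-\mu^*(z)\bigr)=-\mu^*([x,y,z])+\bigl(-\mu^*(z)\bigr)\rho^*(x,y)$. Pairing both sides with $v$ and unwinding \eqref{3-lie-dual} and \eqref{com-asso-dual}, every term collapses to an expression of the form $\langle\xi,(\cdots)v\rangle$, and the identity becomes exactly the operator relation $\rho(x,y)\mu(z)-\mu(z)\rho(x,y)=\mu([x,y,z])$, which is just the original \eqref{rep-2}. Hence this compatibility condition for the dual follows immediately.

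The delicate step, and the one I expect to be the main obstacle, is \eqref{rep-1} for $(\rho^*,-\mu^*)$. Because dualization reverses the order of operator composition, pairing the dual version of \eqref{rep-1} against $v$ produces on one side the operator $\rho(x\cdot y,z)$ and on the other the operator $\rho(y,z)\mu(x)+\rho(x,z)\mu(y)$, with $\mu$ on the \emph{right}. The original \eqref{rep-1}, however, reads $\rho(x\cdot y,z)=\mu(x)\rho(y,z)+\mu(y)\rho(x,z)$, with $\mu$ on the \emph{left}. To close this gap I would rewrite the discrepancy as the sum of commutators $[\mu(x),\rho(y,z)]+[\mu(y),\rho(x,z)]$ (commutators of operators on $V$) and evaluate each using \eqref{rep-2} in the form $\rho(u,w)\mu(t)-\mu(t)\rho(u,w)=\mu([u,w,t])$. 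Combined with the skew-symmetry of the 3-Lie bracket, which gives $[y,z,x]=[x,y,z]$ and $[x,z,y]=-[x,y,z]$, the two commutator contributions become $-\mu([x,y,z])$ and $+\mu([x,y,z])$ and cancel, so the left-$\mu$ and right-$\mu$ forms agree. Thus \eqref{rep-1} for the dual rests on both original compatibility conditions together, not on \eqref{rep-1} alone.

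Finally, the coadjoint assertion is the special case $(V,\rho,\mu)=(A,\mathrm{ad},\mathcal{L})$: since $(A,\mathrm{ad},\mathcal{L})$ is the adjoint representation of $(A,\cdot,[\cdot,\cdot,\cdot])$, the dual construction just established shows that $(A^*,\mathrm{ad}^*,-\mathcal{L}^*)$ is a representation of $(A,\cdot,[\cdot,\cdot,\cdot])$, as claimed.
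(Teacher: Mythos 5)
Your proposal is correct and follows essentially the same route as the paper: Propositions \ref{pro:3-lie-dual-rep} and \ref{pro:com-asso-dual-rep} dispose of the pure axioms, the dual of \eqref{rep-2} pairs back to \eqref{rep-2} itself, and the order reversal in the dual of \eqref{rep-1} is closed exactly as in the paper, by converting the right-$\mu$ terms via \eqref{rep-2} so that $\mu([y,z,x])$ and $\mu([x,z,y])$ cancel by skew-symmetry before \eqref{rep-1} is applied. Incidentally, your explicit commutator bookkeeping also clarifies the paper's own computation, whose penultimate displayed line contains a typo: after the cancellation the expression should read $-\rho(x\cdot y,z)+\mu(x)\rho(y,z)+\mu(y)\rho(x,z)$, with $\mu$ on the left.
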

	\begin{proof}
		By Proposition \ref{pro:3-lie-dual-rep} and Proposition \ref{pro:com-asso-dual-rep}, we have that $(V^*,\rho^*)$ is a representation of the 3-Lie algebra $(A,[\cdot,\cdot,\cdot])$, and $(V^*,-\mu^*)$ is a representation of the commutative associative algebra $(A,\cdot)$.	
		
		Next, we will prove that $\rho^*$ and $-\mu^*$ satisfy \eqref{rep-1} and \eqref{rep-2}. For all $x,y,z\in A, \xi\in V^*, v\in V$, we have
		\begin{eqnarray*}	
			&&\langle\big(\rho^*(x\cdot y,z)+\mu^*(x)\rho^*(y,z)+\mu^*(y)\rho^*(x,z)\big)\xi,v \rangle\\
			&\overset{\eqref{com-asso-dual},\eqref{3-lie-dual}}{=}&\langle \xi, \big(-\rho(x\cdot y,z)+\rho(y,z)\mu(x)+\rho(x,z)\mu(y)\big)v\rangle \\
			&=&\langle \xi, \big(-\rho(x\cdot y,z)+\mu([y,z,x])+\mu(x)\rho(y,z)+\mu([x,z,y])+\mu(y)\rho(x,z)\big)v\rangle\\
			&\overset{\eqref{rep-2}}{=}&\langle \xi, \big(-\rho(x\cdot y,z)+\rho(y,z)\mu(x)+\rho(x,z)\mu(y)\big)v\rangle\\
			&\overset{\eqref{rep-1}}{=}&0.
		\end{eqnarray*}
		Then 
		\begin{eqnarray*}
			\rho^*(x\cdot y,z)=-\mu^*(x)\rho^*(y,z)-\mu^*(y)\rho^*(x,z).
		\end{eqnarray*}
		Similarly, we have
		\begin{eqnarray*}	
			&&\langle\big(-\rho^*(x,y)\mu^*(z)+\mu^*([x,y,z])+\mu^*(z)\rho^*(x, y)\big)\xi,v \rangle  \\
			&\overset{\eqref{com-asso-dual},\eqref{3-lie-dual}}{=}&\langle\xi,\big(-\mu(z)\rho(x,y)-\mu([x,y,z])+\rho(x, y)\mu(z)\big)v \rangle\\
			&\overset{\eqref{rep-2}}{=}&0.
		\end{eqnarray*}
		Then 
		\begin{eqnarray*}
			-\rho^*(x,y)\mu^*(z)=-\mu^*([x,y,z])-\mu^*(z)\rho^*(x, y).
		\end{eqnarray*}
		
		Therefore, $(V,\rho^*,-\mu^*)$ is a representation of $(A,\cdot, [\cdot,\cdot,\cdot])$. In particular, taking $\rho=\mathrm{ad}$ and $\mu=\mathcal{L}$, we have that $(A^*,\mathrm{ad}^*,-\mathcal{L}^*)$ is a representation of $(A,\cdot, [\cdot,\cdot,\cdot])$. The conclusion holds.
	\end{proof}

	\begin{pro}\label{pro:possion-semi-direct}
		Let $(A,\cdot, [\cdot,\cdot,\cdot])$ be a Poisson 3-Lie algebra and $V$ be a vector space. Let $\mu:A\rightarrow \mathfrak{gl}(V)$ and $\rho:\otimes^2 A\rightarrow\mathfrak{gl}(V)$ be linear maps. Then $(V,\rho,\mu)$ is a representation of Poisson 3-Lie algebras $(A,\cdot, [\cdot,\cdot,\cdot])$ if and only if $(A\oplus V, \cdot_\mu, [\cdot,\cdot,\cdot]_\rho)$ is a  Poisson 3-Lie algebra, where the product $\cdot_{\mu}$ and the bracket $[\cdot,\cdot,\cdot]_{\rho}$ are given by
		\begin{align}
			(x_1+u_1)\cdot_{\mu}(x_2+u_2)&=x_1\cdot x_2+ \mu(x_1)u_2+\mu(x_2)u_1,\label{tp-semi-1}\\
			[x_1+u_1,x_2+u_2,x_3+u_3]_{\rho}&=[x_1,x_2,x_3]+\rho(x_1,x_2)u_3-\rho(x_1,x_3)u_2+\rho(x_2,x_3)u_1,\label{tp-semi-2}
		\end{align}
		for all $x_1,x_2,x_3\in A, u_1,u_2,u_3\in V$. We denote it by $A\ltimes_{\mu,\rho}V$, which is called \textbf{semi-direct product Poisson 3-Lie algebras}. 
	\end{pro}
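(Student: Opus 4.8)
The plan is to verify directly that $(A\oplus V, \cdot_\mu, [\cdot,\cdot,\cdot]_\rho)$ satisfies the three defining axioms of a Poisson 3-Lie algebra: that $(A\oplus V,\cdot_\mu)$ is a commutative associative algebra, that $(A\oplus V,[\cdot,\cdot,\cdot]_\rho)$ is a 3-Lie algebra, and that the generalized Leibniz rule \eqref{eq:poisson-alg} holds. The key structural observation is that, under the direct sum decomposition $A\oplus V$, the expansion of each axiom splits into an $A$-component and a $V$-component. Since $(A,\cdot,[\cdot,\cdot,\cdot])$ is by hypothesis a Poisson 3-Lie algebra, every $A$-component is automatically satisfied. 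Thus the entire content of the equivalence is carried by the $V$-components, and because each such component is an honest identity (not merely an implication), matching its two sides simultaneously proves both directions of the proposition.

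First I would dispatch the commutative associative part. Commutativity of $\cdot_\mu$ is immediate from \eqref{tp-semi-1}. Expanding the associativity of $\cdot_\mu$ on three elements $x_i+u_i$ and collecting the $V$-component, the coefficients of $u_1,u_2,u_3$ each reduce to the single identity $\mu(x\cdot y)=\mu(x)\mu(y)$ (using that, once this holds, the operators $\mu(x)$ and $\mu(y)$ commute). Hence this part is equivalent to $(V,\mu)$ being a representation of the commutative associative algebra $(A,\cdot)$.

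Next I would handle the 3-Lie part. Skew-symmetry of $[\cdot,\cdot,\cdot]_\rho$ is clear from \eqref{tp-semi-2}. Expanding the Filippov--Jacobi identity \eqref{Jac} for five elements of $A\oplus V$ and isolating the $V$-component by setting all but one $u_i$ equal to zero, the position of the surviving $V$-entry selects one of the two defining identities of a representation of the 3-Lie algebra $(A,[\cdot,\cdot,\cdot])$: an entry in the innermost bracket produces the identity expressing $\rho([x_1,x_2,x_3],x_4)$ as a sum of three composites, while an entry in an outer slot produces the commutator identity for $\rho(x_1,x_2)$ and $\rho(x_3,x_4)$. Thus this part is equivalent to $(V,\rho)$ being a representation of the 3-Lie algebra.

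Finally I would treat the generalized Leibniz rule. Writing the distinguished element as $x_4+u_4$ and expanding $[(x_4+u_4)\cdot_\mu(x_1+u_1),\,x_2+u_2,\,x_3+u_3]_\rho$ against $(x_4+u_4)\cdot_\mu[x_1+u_1,x_2+u_2,x_3+u_3]_\rho+(x_1+u_1)\cdot_\mu[x_4+u_4,x_2+u_2,x_3+u_3]_\rho$, I would collect the $V$-component according to the four parameters $u_1,u_2,u_3,u_4$. The coefficients of $u_2$ and $u_3$, the entries sitting in the two bracket slots, reproduce \eqref{rep-1}, while the coefficients of $u_1$ and $u_4$, the entries sitting inside the product $\cdot_\mu$, reproduce \eqref{rep-2} (here one uses the skew-symmetry of $[\cdot,\cdot,\cdot]$ to rewrite $[x_4,x_2,x_3]=[x_2,x_3,x_4]$). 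The main obstacle throughout is bookkeeping: the Filippov--Jacobi and Leibniz expansions generate many terms, and one must carefully track the signs dictated by \eqref{tp-semi-2} and choose, in each sub-case, which components lie in $A$ and which in $V$ so as to isolate a single representation axiom. Once every $A$-component has been discarded as automatic and every $V$-component matched term by term with the five representation identities, both implications of the stated equivalence follow at once.
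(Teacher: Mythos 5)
Your proposal is correct and follows essentially the same route as the paper: the paper likewise proves the compatibility condition by expanding the generalized Leibniz rule on $A\oplus V$ and matching the $V$-components with \eqref{rep-1} and \eqref{rep-2} (including the same appeal to skew-symmetry to identify $\mu([x_4,x_2,x_3])$ with $\mu([x_2,x_3,x_4])$); the only structural difference is that for the commutative associative and 3-Lie parts the paper simply invokes the known semi-direct product results of \cite{R. Schafer} and \cite{Bai2019}, whereas you re-derive them by the same coefficient-extraction method, which makes your argument self-contained and also makes the ``if and only if'' logic explicit where the paper only gestures at the converse. One small slip in your sketch of the 3-Lie part: the attribution is reversed. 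Placing the surviving $V$-entry in one of the slots $x_3,x_4,x_5$ of the inner bracket in \eqref{Jac} yields the commutator identity, e.g.\ the entry in slot $x_5$ gives
\begin{equation*}
\rho(x_1,x_2)\rho(x_3,x_4)-\rho(x_3,x_4)\rho(x_1,x_2)=\rho([x_1,x_2,x_3],x_4)-\rho([x_1,x_2,x_4],x_3),
\end{equation*}
while placing it in the outer slot $x_1$ or $x_2$ yields the identity expressing $\rho$ of a bracket as a sum of three composites. This misattribution is harmless: both representation axioms still arise from the coefficient analysis, and together with the automatic $A$-components they give exactly the claimed equivalence, so your plan goes through as stated.
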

	\begin{proof}
		Recall \cite{R. Schafer, Bai2019}, the semi-direct product of commutative associative algebras and 3-Lie algebras, we know that $(V,\mu)$ is a representation of a commutative associative algebra $(A,\cdot)$ if and only if  $(A\oplus V, \cdot_{\mu})$ is a commutative associative algebra, and $(V,\rho)$ is a representation of a 3-Lie algebra $(A,[\cdot,\cdot,\cdot])$ if and only if $(A\oplus V,[\cdot,\cdot,\cdot]_{\rho})$ is a  3-Lie algebra. 
		If $(V,\rho,\mu)$ is a representation of a Poisson 3-Lie algebra $(A,\cdot, [\cdot,\cdot,\cdot])$, then for all $x_i \in A, u_i \in V,1\leq i\leq4$, we have
		\begin{eqnarray*}
			&&[(x_1+u_1)\cdot_\mu(x_2+u_2),x_3+u_3,x_4+u_4]_\rho \\
			&\overset{\eqref{tp-semi-1}}{=}&[x_1\cdot x_2+ \mu(x_1)u_2+\mu(x_2)u_1,x_3+u_3,x_4+u_4]_\rho\\
			&\overset{\eqref{tp-semi-2}}{=}&[x_1\cdot x_2,x_3,x_4]+\rho(x_1\cdot x_2,x_3)u_4-\rho(x_1\cdot x_2,x_4)u_3+\rho(x_3,x_4)\big(\mu(x_1)u_2+\mu(x_2)u_1\big)\\
			&\overset{\eqref{rep-1},\eqref{rep-2}}{=}&x_1\cdot [x_2,x_3,x_4]+x_2\cdot [x_1,x_3,x_4]+\mu(x_1)\rho(x_2,x_3)u_4+\mu(x_2)\rho(x_1,x_3)u_4\\
			&&-\mu(x_1)\rho(x_2,x_4)u_3-\mu(x_2)\rho(x_1,x_4)u_3+\mu([x_3,x_4,x_1])u_2+\mu(x_1)\rho(x_3,x_4)u_2\\
			&&+\mu([x_3,x_4,x_2])u_1+\mu(x_2)\rho(x_3,x_4)u_1\\
			&\overset{\eqref{tp-semi-1}}{=}&(x_1+u_1)\cdot_{\mu}([x_2,x_3,x_4]+\rho(x_2,x_3)u_4-\rho(x_2,x_4)u_3+\rho(x_3,x_4)u_1) \\
			&&(x_2+u_2)\cdot_{\mu}([x_1,x_3,x_4]+\rho(x_1,x_3)u_4-\rho(x_1,x_4)u_3+\rho(x_3,x_4)u_1)\\
			&\overset{\eqref{tp-semi-2}}{=}&(x_1+u_1)\cdot_\mu[ x_2+u_2,x_3+u_3,x_4+u_4]_\rho+(x_2+u_2)\cdot_\mu[ x_1+u_1,x_3+u_3,x_4+u_4]_\rho.
		\end{eqnarray*}
		
		Therefore, $(A\oplus V, \cdot_\mu, [\cdot,\cdot,\cdot]_\rho)$ is a Poisson 3-Lie algebra. Conversely, in a similar manner, if $(A\oplus V, \cdot_\mu, [\cdot,\cdot,\cdot]_\rho)$ is a Poisson 3-Lie algebra, then $(V,\rho,\mu)$ is a representation of Poisson 3-Lie algebras. The conclusion holds.
	\end{proof}
	
	\subsection{Matched pairs of Poisson 3-Lie algebras}\label{subsec:42}
	\begin{defi}
		(\cite{Bai2019})	Let $(A,[\cdot,\cdot,\cdot]_A)$ and $(B,[\cdot,\cdot,\cdot]_B)$ be two 3-Lie algebras, and let $(B,\rho_A)$ and $(A,\rho_B)$ be representations of $(A,[\cdot,\cdot,\cdot]_A)$ and $(B,[\cdot,\cdot,\cdot]_B)$ respectively. If the following conditions are satisfied:
		{\footnotesize	\begin{align*}
				\rho_B(y_4,y_5)[x_1,x_2,x_3]_A &=[\rho_B(y_4,y_5)x_1,x_2,x_3]_A+[x_1,\rho_B(y_4,y_5)x_2,x_3]_A 
				+[x_1,x_2,\rho_B(y_4,y_5)x_3]_A,\\
				-\rho_B\big(\rho_A(x_1,x_2)y_3,y_5\big)x_4 &= -\rho_B\big(\rho_A(x_1,x_4)y_5,y_3\big)x_2+\rho_B\big(\rho_A(x_2,x_4)y_5,y_3\big)x_1-[x_1,x_2,\rho_B(y_3,y_5)x_4]_A,\\
				\left[ \rho_B(y_2,y_3)x_1,x_4,x_5\right]_A &= \rho_B(y_2,y_3)[x_1,x_4,x_5]_A+\rho_B\big(\rho_A(x_4,x_5)y_2,y_3\big)x_1+\rho_B\big(y_2,\rho_A(x_4,x_5)y_3\big)x_1,\\	
				\rho_A(x_4,x_5)[y_1,y_2,y_3]_B &=[\rho_A(x_4,x_5)y_1,y_2,y_3]_B+[y_1,\rho_A(x_4,x_5)y_2,y_3]_B +[y_1,y_2,\rho_A(x_4,x_5)y_3]_B,\\
				-\rho_A\big(\rho_B(y_1,y_2)x_3,x_5\big)y_4 &= -\rho_A\big(\rho_B(y_1,y_4)x_5,x_3\big)y_2+\rho_A\big(\rho_B(y_2,y_4)x_5,x_3\big)y_1-[y_1,y_2,\rho_A(x_3,x_5)y_4]_B,\\
				\left[ \rho_A(x_2,x_3)y_1,y_4,y_5\right]_B &= \rho_A(x_2,x_3)[y_1,y_4,y_5]_A+\rho_A\big(\rho_B(y_4,y_5)x_2,x_3\big)y_1+\rho_A\big(x_2,\rho_B(y_4,y_5)x_3\big)y_1,
		\end{align*}}
		for all $x_i\in A, y_i\in B, 1\leq i\leq 5$, then $(A,B,\rho_A,\rho_B)$ is called a \textbf{matched pair of 3-Lie algebras}.
	\end{defi}
	
	\begin{pro}\label{pro:direct-3-Lie}
		$($\cite{Bai2019}$)$ Let $(A,[\cdot,\cdot,\cdot]_A)$ and $(B,[\cdot,\cdot,\cdot]_B)$ be two 3-Lie algebras, and let  $\rho_A:\otimes^2 A\rightarrow \mathfrak{gl}(B)$, $\rho_B:\otimes^2 B\rightarrow \mathfrak{gl}(A)$ be two linear maps. Then for all $x_1,x_2,x_3\in A$, $y_1,y_2,y_3\in B$ there is a 3-Lie algebra structure on the vector space $A\oplus B$ defined by 
		\begin{eqnarray}\label{3-lie-sum}
			&&[x_1+y_1,x_2+y_2,x_3+y_3]_{A\oplus B}=[x_1,x_2,x_3]_A+\rho_A(x_1,x_2)y_3+\rho_A(x_3,x_1)y_2+\rho_A(x_2,x_3)y_1\nonumber\\
			&&+[y_1,y_2,y_3]_B+\rho_B(y_1,y_2)x_3+\rho_B(y_3,y_1)x_2+\rho_B(y_2,y_3)x_1, 	 
		\end{eqnarray}
		if and only if $(A,B,\rho_A,\rho_B)$ a matched pair of 3-Lie algebras. The 3-Lie algebra structure on the vector space $A\oplus B$ is denoted by $A\Join_{\rho_A}^{\rho_B} B$.
	\end{pro}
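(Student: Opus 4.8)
The plan is to establish both implications at once by determining exactly when the Filippov--Jacobi identity \eqref{Jac} holds for the bracket $[\cdot,\cdot,\cdot]_{A\oplus B}$ defined in \eqref{3-lie-sum}. The skew-symmetry of $[\cdot,\cdot,\cdot]_{A\oplus B}$ is immediate: the ambient brackets $[\cdot,\cdot,\cdot]_A$ and $[\cdot,\cdot,\cdot]_B$ are skew-symmetric, and each of $\rho_A(\cdot,\cdot)$ and $\rho_B(\cdot,\cdot)$ is skew-symmetric in its two arguments (being a representation of a 3-Lie algebra), so the cyclic arrangement of the $\rho_A$- and $\rho_B$-terms in \eqref{3-lie-sum} forces a sign change under any transposition of the three slots. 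Thus only \eqref{Jac} needs to be analyzed.

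By multilinearity it suffices to verify \eqref{Jac} on homogeneous inputs, where each of the five entries lies entirely in $A$ or entirely in $B$. I would classify the cases by the number $k$ of entries drawn from $A$ and, within each, by how those entries are distributed between the distinguished acting pair (slots $1,2$) and the acted triple (slots $3,4,5$). When $k=5$ or $k=0$, all mixed terms in \eqref{3-lie-sum} vanish and \eqref{Jac} collapses to the Filippov--Jacobi identity of $A$ or of $B$, which holds by hypothesis. When $k=4$ or $k=1$, inserting \eqref{3-lie-sum} and projecting onto the component carrying the single minority entry turns \eqref{Jac} into precisely the two defining axioms of the representations $(B,\rho_A)$ and $(A,\rho_B)$; these are part of the data, so no new constraint appears.

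The genuine content lives in the cases $k=3$ and $k=2$. For $k=3$ (three entries from $A$, two from $B$) the resulting identity is $A$-valued, and the three possible placements of the two $B$-entries---both outside the acting pair, one inside, or both inside---yield, after inserting \eqref{3-lie-sum} into both the inner and outer brackets of \eqref{Jac} and simplifying with skew-symmetry, exactly the third, second, and first equations of the matched-pair definition, respectively. The case $k=2$ is the mirror image: it is $B$-valued and, according to the analogous placements of the two $A$-entries, reproduces the sixth, fifth, and fourth equations. In each instance one inserts \eqref{3-lie-sum} into the doubly nested bracket of \eqref{Jac}, discards every term that would require two minority-algebra inputs in a single $\rho$-slot, and regroups the survivors, using the skew-symmetry of $\rho_A$, $\rho_B$ and of the ambient brackets to align argument orders.

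The main obstacle is bookkeeping rather than anything conceptual: each side of \eqref{Jac} expands into a large number of terms once \eqref{3-lie-sum} is substituted twice, and the signs and argument orders must be tracked with care. The outcome, however, is transparent. The identity \eqref{Jac} is multilinear, so it splits into independent components indexed by the type (the distribution of $A$- and $B$-entries) and, within a type, by the sub-placement; the elementary terms of distinct types---expressions such as $\rho_B(\rho_A(x_i,x_j)y_k,y_\ell)x_m$ and $[\rho_B(y_k,y_\ell)x_i,x_j,x_m]_A$---cannot cancel against one another. Hence \eqref{Jac} holds for all inputs if and only if each of these component identities holds for all inputs, and the latter are exactly the six matched-pair equations. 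Comparing coefficients therefore yields the equivalence in both directions simultaneously, completing the proof.
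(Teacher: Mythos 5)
Your strategy is the right one, and it is essentially the proof of this result: the paper itself states the proposition without proof, citing \cite{Bai2019}, and both the argument there and the paper's own analogous computation (the proof of Theorem \ref{matched-pair}, which redoes this expansion with the associative data added) proceed exactly as you propose --- substitute \eqref{3-lie-sum} twice into the Filippov--Jacobi identity \eqref{Jac}, restrict to homogeneous inputs, split by the number $k$ of entries from $A$ and by their placement relative to the acting pair, and observe that $k=5,0$ give the ambient Jacobi identities, $k=4,1$ give the two representation axioms, and the $A$-valued $k=3$ and $B$-valued $k=2$ cases give the six compatibility conditions (up to relabeling and skew-symmetry manipulations, which you explicitly allow for). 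The separation of types by multilinearity and directness of $A\oplus B$ is also the correct justification that \eqref{Jac} is equivalent to the conjunction of the component identities.

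One step as written is circular, however, and it affects the ``only if'' direction. The proposition takes $\rho_A\colon\otimes^2A\to\mathfrak{gl}(B)$ and $\rho_B\colon\otimes^2B\to\mathfrak{gl}(A)$ to be arbitrary linear maps; that they are skew-symmetric representations is part of the \emph{definition} of a matched pair, hence part of what must be proved when you assume $(A\oplus B,[\cdot,\cdot,\cdot]_{A\oplus B})$ is a 3-Lie algebra. Your opening claim that skew-symmetry of \eqref{3-lie-sum} is immediate because $\rho_A,\rho_B$ are skew ``being representations of a 3-Lie algebra'' therefore assumes the conclusion in that direction, and likewise your remark in the $k=4,1$ cases that the representation axioms ``are part of the data, so no new constraint appears'' is only valid in the forward implication. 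The repair is short but should be made explicit: in the converse direction, the assumed skew-symmetry of $[\cdot,\cdot,\cdot]_{A\oplus B}$ applied to homogeneous triples such as $(x_1,x_2,y_3)$ forces $\rho_A(x_2,x_1)=-\rho_A(x_1,x_2)$ (and similarly for $\rho_B$), and the $k=4$ and $k=1$ instances of \eqref{Jac} are then precisely the step that \emph{establishes} the two representation identities for $(B,\rho_A)$ and $(A,\rho_B)$ demanded by the matched-pair definition. With that reframing the two directions are genuinely handled simultaneously, as you intend, and the proof is complete.
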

	
	\begin{defi} \label{def: MP-CAA}
		(\cite{Bai2010}) Let $(A,\cdot_A)$ and $(B,\cdot_B)$ be two commutative associative algebras, and $(B,\mu_A)$ and $(A,\mu_B)$ be representations of $(A,\cdot_A)$ and $(B,\cdot_B)$ respectively. If the following conditions are satisfied:
		\begin{align*}
			\mu_A(x)(a\cdot_B b)&=\big(\mu_A(x)a\big)\cdot_B b+ \mu_A\big(\mu_B(a)x\big)b, \\
			\mu_B(a)(x\cdot_A y)&=\big(\mu_B(a)x\big)\cdot_A y+ \mu_B\big(\mu_A(x)a\big)y,		
		\end{align*}
		for all $x,y\in A, a,b\in B$, then $(A,B,\mu_A,\mu_B)$ is called a \textbf{matched pair of commutative associative algebras}.
	\end{defi}
	
	\begin{pro}\label{pro:direct-comm-asso}
		$($\cite{Bai2010}$)$	Let  $(A,\cdot_A)$ and $(B,\cdot_B)$ be two commutative associative algebras, and $\mu_A: A\rightarrow\mathfrak{gl}(B)$, $\mu_B:B\rightarrow\mathfrak{gl}(A)$ be  two linear maps. Then there is a commutative associative algebra structure on $A\oplus B$ defined by 
		\begin{equation}\label{com-asso-sum}
			(x+a)\cdot(y+b)=x\cdot_A y+\mu_B(a)y+\mu_B(b)x+a\cdot_B b+\mu_A(x)b+\mu_A(y)a,\quad\forall~ x,y\in A,a,b\in B,
		\end{equation}
		if and only if $(A,B,\mu_A,\mu_B)$ is a matched pair of commutative associative algebras. The commutative associative algebra structure on $A\oplus B$ is denoted by $A\Join_{\mu_A}^{\mu_B} B$.
	\end{pro}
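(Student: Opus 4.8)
The plan is to verify directly that the bilinear operation \eqref{com-asso-sum} defines a commutative associative algebra on $A\oplus B$ exactly when the two identities of Definition \ref{def: MP-CAA} hold (together with the representation axioms that are part of the matched-pair definition). Commutativity is immediate and unconditional: interchanging $x+a$ and $y+b$ in \eqref{com-asso-sum} merely swaps the two $\mu_B$-terms with each other and the two $\mu_A$-terms with each other, while $x\cdot_A y$ and $a\cdot_B b$ are symmetric because $\cdot_A$ and $\cdot_B$ are commutative. Hence the entire content of the proposition lies in associativity, and I would state that observation first to dispose of one half of the algebra axioms cheaply.

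First I would expand both $\big((x+a)\cdot(y+b)\big)\cdot(z+c)$ and $(x+a)\cdot\big((y+b)\cdot(z+c)\big)$ using \eqref{com-asso-sum}, and then project each side onto its $A$-component and its $B$-component. The terms involving only $x,y,z$ reproduce the associator of $\cdot_A$, and those involving only $a,b,c$ reproduce the associator of $\cdot_B$; both vanish by hypothesis. The remaining, genuinely mixed terms organize into two families. Collecting the $B$-valued terms that are linear in two of $x,y,z$ forces $\mu_A(x\cdot_A y)=\mu_A(x)\mu_A(y)$, i.e. that $(B,\mu_A)$ is a representation of $(A,\cdot_A)$; the mirror bookkeeping gives that $(A,\mu_B)$ is a representation of $(B,\cdot_B)$. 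The terms linear in one of $x,y,z$ and one of $a,b,c$ reorganize, after using commutativity of $\cdot_A$ and $\cdot_B$ to fold the asymmetric contributions onto a common form, into exactly the two compatibility identities
\[\mu_A(x)(a\cdot_B b)=(\mu_A(x)a)\cdot_B b+\mu_A(\mu_B(a)x)b,\qquad \mu_B(a)(x\cdot_A y)=(\mu_B(a)x)\cdot_A y+\mu_B(\mu_A(x)a)y\]
of Definition \ref{def: MP-CAA}.

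Because every step is an equality of multilinear expressions, the computation is reversible, so assuming the matched-pair axioms the same terms cancel and associativity follows, yielding the converse from the same display read backwards. To make the ``only if'' direction fully rigorous I would specialize to pure inputs: taking $x,y\in A$ and $c\in B$ isolates $\mu_A(x\cdot_A y)=\mu_A(x)\mu_A(y)$ in the $B$-component and the second compatibility identity in the $A$-component, while the mirror choice $a,b\in B$, $z\in A$ isolates the other two conditions. I expect the only real difficulty to be organizational rather than conceptual: keeping track of the dozen or so mixed terms on each side, grouping them by target summand and by which inputs they are linear in, and correctly invoking the commutativity of $\cdot_A$ and $\cdot_B$ to collapse the apparently asymmetric terms onto the symmetric form in which the matched-pair identities are stated.
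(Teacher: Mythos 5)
Your proposal is correct: commutativity of \eqref{com-asso-sum} is indeed unconditional, and equating the $A$- and $B$-components of the two associations, sorted by multidegree in $x,y,z,a,b,c$, yields exactly the representation axioms $\mu_A(x\cdot_A y)=\mu_A(x)\mu_A(y)$, $\mu_B(a\cdot_B b)=\mu_B(a)\mu_B(b)$ (the remaining crossed pieces such as $\mu_B(c)\mu_B(a)y=\mu_B(a)\mu_B(c)y$ then commute by these axioms together with commutativity) and the two compatibility identities of Definition \ref{def: MP-CAA}, with the pure-input specializations giving the converse. The paper itself states this proposition without proof, citing \cite{Bai2010}, and your direct expansion-and-comparison argument is precisely the standard proof given there, so there is nothing to add.
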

	\begin{defi}
		Let $(A,\cdot_A,[\cdot,\cdot,\cdot]_A)$ and $(B,\cdot_B,[\cdot,\cdot,\cdot]_B)$ be two Poisson 3-Lie algebras. Let $\mu_A:A\rightarrow \mathfrak{gl}(B)$, $\rho_A:\otimes^2 A\rightarrow \mathfrak{gl}(B)$ and $\mu_B:B\rightarrow \mathfrak{gl}(A)$, $\rho_B:\otimes^2 B\rightarrow \mathfrak{gl}(A)$ be linear maps such that $(A,B,\mu_A,\mu_B)$ is a matched pair of commutative associative algebra and $(A,B,\rho_A,\rho_B)$ is a matched pair of 3-Lie algebra. If $(B,\rho_A,\mu_A)$ and $(A,\rho_B,\mu_B)$ are  representations of $(A,\cdot_A,[\cdot,\cdot,\cdot]_A)$ and $(B,\cdot_B,[\cdot,\cdot,\cdot]_B)$ respectively, and for all $x_i\in A, y_i\in B, 1\leq i\leq 4$, the following equations hold:
		\begin{eqnarray}
			&&[\mu_B(y_1)x_1,x_2,x_3]=\mu_B\big(\rho_A(x_2,x_3)y_1\big)x_1+\mu_B(y_1)([x_1,x_2,x_3]),\label{cond-1}\\
			&&[\mu_A(x_1)y_1,y_2,y_3]=\mu_A\big(\rho_B(y_2,y_3)x_1\big)y_1+\mu_A(x_1)([y_1,y_2,y_3]),\label{cond-2}\\
			&&\rho_B\big(\mu_A(x_1)y_1,y_2\big)x_2=x_1\cdot_{A}\big(\rho_B(y_1,y_2)x_2\big),\label{cond-3}\\
			&&\rho_A\big(\mu_B(y_1)x_1,x_2\big)y_2=y_1\cdot_{B}\big(\rho_A(x_1,x_2)y_2\big),\label{cond-4}\\
			&&\mu_B\big(\rho_A(x_1,x_2)y_1\big)x_3+\mu_B\big(\rho_A(x_3,x_2)y_1\big)x_1=0,\label{cond-5}\\
			&&\mu_A\big(\rho_B(y_1,y_2)x_1\big)y_3+\mu_A\big(\rho_B(y_3,y_2)x_1\big)y_1=0,\label{cond-6}\\
			&&\rho_B(y_3,y_4)(x_1\cdot_{A}x_2)=x_1\cdot_{A}\big(\rho_B(y_3,y_4)x_2\big)+x_2\cdot_{A}\big(\rho_B(y_3,y_4)x_1\big),\label{cond-7}\\	
			&&\rho_A(x_3,x_4)(y_1\cdot_{B}y_2)=y_1\cdot_{B}\big(\rho_A(x_3,x_4)y_2\big)+y_2\cdot_{B}\big(\rho_A(x_3,x_4)y_1\big),\label{cond-8}
		\end{eqnarray}
		then $(A,B,\rho_A,\mu_A,\rho_B,\mu_B)$ is called a \textbf{matched pair of Poisson 3-Lie algebras}.
	\end{defi}
	\begin{thm}\label{matched-pair}
		Let $(A,\cdot_A,[\cdot,\cdot,\cdot]_A)$ and $(B,\cdot_B,[\cdot,\cdot,\cdot]_B)$ be two Poisson 3-Lie algebras. Let $\mu_A:A\rightarrow \mathfrak{gl}(B)$, $\rho_A:\otimes^2 A\rightarrow \mathfrak{gl}(B)$ and $\mu_B:B\rightarrow \mathfrak{gl}(A)$, $\rho_B:\otimes^2 B\rightarrow \mathfrak{gl}(A)$ are linear maps. Define two linear operations $\cdot$ and $[\cdot,\cdot,\cdot]$ on $A\oplus B$ by \eqref{com-asso-sum} and \eqref{3-lie-sum}, respectively. Then $(A\oplus B,\cdot,[\cdot,\cdot,\cdot])$ is a Poisson 3-Lie algebra if and only if $(A,B,\rho_A,\mu_A,\rho_B,\mu_B)$ is a matched pair of Poisson 3-Lie algebras. We denote it by $A\Join _{\mu_A,\rho_A}^{\mu_B,\rho_B}B$.
	\end{thm}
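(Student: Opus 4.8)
The plan is to reduce the statement to three independent structural requirements on $A\oplus B$ and then match the generalized Leibniz rule \eqref{eq:poisson-alg} against the listed compatibility conditions. By definition, $(A\oplus B,\cdot,[\cdot,\cdot,\cdot])$ is a Poisson 3-Lie algebra precisely when (i) $(A\oplus B,\cdot)$ is commutative associative, (ii) $(A\oplus B,[\cdot,\cdot,\cdot])$ is a 3-Lie algebra, and (iii) the generalized Leibniz rule holds. Requirement (i) is settled at once by Proposition \ref{pro:direct-comm-asso}: since $\cdot$ is defined by \eqref{com-asso-sum}, it is commutative associative iff $(A,B,\mu_A,\mu_B)$ is a matched pair of commutative associative algebras. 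Likewise (ii) is settled by Proposition \ref{pro:direct-3-Lie}: since $[\cdot,\cdot,\cdot]$ is defined by \eqref{3-lie-sum}, it is a 3-Lie algebra iff $(A,B,\rho_A,\rho_B)$ is a matched pair of 3-Lie algebras. Thus only (iii) remains, and the task is to show that, granting (i) and (ii), the Leibniz rule on $A\oplus B$ is equivalent to $(B,\rho_A,\mu_A)$ and $(A,\rho_B,\mu_B)$ being representations of the respective Poisson 3-Lie algebras together with \eqref{cond-1}--\eqref{cond-8}; note that the commutative-associative and 3-Lie halves of these representation hypotheses are already subsumed by (i) and (ii), so the genuinely new content is the mixed identities \eqref{rep-1} and \eqref{rep-2} for each pair.

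For (iii) I would substitute $w=x_4+y_4$, $x=x_1+y_1$, $y=x_2+y_2$, $z=x_3+y_3$ into $[w\cdot x,y,z]=w\cdot[x,y,z]+x\cdot[w,y,z]$, expand both sides via \eqref{com-asso-sum} and \eqref{3-lie-sum}, and project the resulting identity onto its $A$- and $B$-components. By multilinearity it suffices to test homogeneous inputs, each of $w,x,y,z$ lying entirely in $A$ or in $B$. Exploiting the symmetry $w\leftrightarrow x$ of the product slots, the skew-symmetry in the bracket slots $y,z$, and the global $A\leftrightarrow B$ duality of the construction, the $16$ homogeneous assignments collapse to a handful of distinct cases, indexed by how many of the two product-slot arguments and how many of the two bracket-slot arguments lie in $A$.

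The two pure cases (all four arguments in $A$, or all four in $B$) reduce to the Leibniz rule in $A$ and in $B$, which hold by hypothesis. Each mixed case produces, after separating components, exactly one or two of the required identities. Concretely: a single $B$-argument in a product slot yields \eqref{cond-1} on the $A$-side and the representation identity \eqref{rep-2} for $(\rho_A,\mu_A)$ on the $B$-side; a single $B$-argument in a bracket slot yields \eqref{cond-5} and the identity \eqref{rep-1} for $(\rho_A,\mu_A)$; the fully split case with one $A$ and one $B$ in each kind of slot yields \eqref{cond-3} and \eqref{cond-4}; and the case with both product arguments in $A$ and both bracket arguments in $B$ yields \eqref{cond-7} (its $B$-component being vacuous). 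The $A\leftrightarrow B$-dual cases produce \eqref{cond-2}, \eqref{cond-6}, \eqref{cond-8} and the representation identities for $(\rho_B,\mu_B)$. Since every listed condition arises as a component of the Leibniz identity on a specific homogeneous input, and conversely each such component is exactly one of the listed conditions, both directions of the equivalence follow simultaneously.

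The main obstacle is organizational rather than conceptual: the expansion in each mixed case generates many terms built from $\mu_A,\mu_B,\rho_A,\rho_B$, and one must track carefully which terms vanish because a product or bracket slot receives a zero component, and then match the survivors against the correct condition. The delicate points will be the skew-symmetry of $\rho_A$ and $\rho_B$ (needed, for instance, to bring the split case into the exact form of \eqref{cond-3}) and the cyclic reorderings of the 3-Lie bracket such as $[x_2,x_3,x_1]=[x_1,x_2,x_3]$ (needed when comparing with \eqref{rep-2}). I expect no single step to be hard, but the bookkeeping across all cases is where errors would most easily creep in.
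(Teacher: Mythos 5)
Your proposal is correct and takes essentially the same approach as the paper: it too disposes of the commutative associative and 3-Lie structures via Propositions \ref{pro:direct-comm-asso} and \ref{pro:direct-3-Lie}, and then shows the mixed Leibniz rule on $A\oplus B$ is equivalent to \eqref{rep-1}, \eqref{rep-2} for both pairs together with \eqref{cond-1}--\eqref{cond-8}. The only difference is bookkeeping: the paper expands the Leibniz identity once on fully general elements $x_i+y_i$ and sorts the labeled terms into groups, which is precisely your homogeneous-input case analysis carried out all at once, and your case-to-condition assignments (the split case giving \eqref{cond-3} and \eqref{cond-4}, the two-bracket-arguments-in-$B$ case giving \eqref{cond-7}, etc.) agree with the paper's groupings.
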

	\begin{proof} 
		By Proposition \ref{pro:direct-3-Lie} and Proposition \ref{pro:direct-comm-asso},  $(A\oplus B,\cdot,[\cdot,\cdot,\cdot])$ is a Poisson 3-Lie algebra if and only if for all $x_i\in A, y_i\in B, 1\leq i\leq 4$,
		\begin{eqnarray*}
			\underbrace{[(x_1+y_1)\cdot(x_2+y_2),x_3+y_3,x_4+y_4]}_{(A1)}&=&\underbrace{(x_1+y_1)\cdot[x_2+y_2,x_3+y_3,x_4+y_4]}_{(A2)}\\
			&&+\underbrace{(x_2+y_2)\cdot[x_1+y_1,x_3+y_3,x_4+y_4]}_{(A3)}.
		\end{eqnarray*}	
		By \eqref{com-asso-sum} and \eqref{3-lie-sum}, we have
		\begin{eqnarray*}
			(A1)&=&[x_1\cdot_{A}x_2+\mu_B(y_2)x_1+\mu_B(y_1)x_2+y_1\cdot_{B}y_2+\mu_A(x_2)y_1+\mu_A(x_1)y_2,x_3+y_3,x_4+y_4]  \\
			&=&\underbrace{[x_1\cdot_{A}x_2,x_3,x_4]}_{(a1)}+\underbrace{[\mu_B(y_1)x_2,x_3,x_4]}_{(d1)}+\underbrace{[\mu_B(y_2)x_1,x_3,x_4]}_{(d4)}\\
			&&+\underbrace{\rho_B(y_1\cdot_{B}y_2,y_3)x_4}_{(b1)}+\underbrace{\rho_B\big(\mu_A(x_2)y_1,y_3\big)x_4}_{(f1)}+\underbrace{\rho_B\big(\mu_A(x_1)y_2,y_3\big)x_4}_{(f5)}\\
			&&+\underbrace{\rho_B\big(y_4,y_1\cdot_{B}y_2\big)x_3}_{(b4)}+\underbrace{\rho_B\big(y_4,\mu_A(x_1)y_2\big)x_3}_{(f9)}+\underbrace{\rho_B\big(y_4,\mu_A(x_2)y_1\big)x_3}_{(f13)}\\
			&&+\underbrace{\rho_B(y_3,y_4)(x_1\cdot_{A}x_2)}_{(j1)}+\underbrace{\rho_B(y_3,y_4)\big(\mu_B(y_2)x_1\big)}_{(c1)}+\underbrace{\rho_B(y_3,y_4)\big(\mu_B(y_1)x_2\big)}_{(c4)}
			\\
			&&+\underbrace{[y_1\cdot_{B}y_2,y_3,y_4]}_{(a4)}+\underbrace{[\mu_A(x_2)y_1,y_3,y_4]}_{(e4)}+\underbrace{[\mu_A(x_1)y_2,y_3,y_4]}_{(e1)}\\
			&&+\underbrace{\rho_A(x_1\cdot_{A}x_2,x_3)y_4}_{(b7)}+\underbrace{\rho_A\big(\mu_B(y_2)x_1,x_3\big)y_4}_{(f10)}+\underbrace{\rho_A\big(\mu_B(y_1)x_2,x_3\big)y_4}_{(f14)}\\
			&&+\underbrace{\rho_A(x_4,x_1\cdot_{A}x_2)y_3}_{(b10)}+\underbrace{\rho_A\big(x_4,\mu_B(y_2)x_1\big)y_3}_{(f6)}+\underbrace{\rho_A\big(x_4,\mu_B(y_1)x_2\big)y_3}_{(f2)}\\
			&&+\underbrace{\rho_A(x_3,x_4)(y_1\cdot_{B}y_2)}_{(k1)}+\underbrace{\rho_A(x_3,x_4)\big(\mu_A(x_2)y_1\big)}_{(c7)}+\underbrace{\rho_A(x_3,x_4)\big(\mu_A(x_1)y_2\big)}_{c10},\\
			(A2)&=&(x_1+y_1)\cdot\big([x_2,x_3,x_4]+\rho_B(y_2,y_3)x_4+\rho_B(y_4,y_2)x_3+\rho_B(y_3,y_4)x_2\\
			&&+[y_2,y_3,y_4]+\rho_A(x_2,x_3)y_4+\rho_A(x_4,x_2)y_3+\rho_A(x_3,x_4)y_2\big)\\
			&=&\underbrace{x_1\cdot_{A}[x_2,x_3,x_4]}_{(a2)}+\underbrace{x_1\cdot_{A}\rho_B(y_2,y_3)x_4}_{(f7)}+\underbrace{x_1\cdot_{A}\rho_B(y_4,y_2)x_3}_{(f11)}+\underbrace{x_1\cdot_{A}\rho_B(y_3,y_4)x_2}_{(j2)}\\
			&&+\underbrace{\mu_B([y_2,y_3,y_4])x_1}_{(c2)}+\underbrace{\mu_B\big(\rho_A(x_2,x_3)y_4\big)x_1}_{(g1)}+\underbrace{\mu_B\big(\rho_A(x_4,x_2)y_3\big)x_1}_{(g3)}\\
			&&+\underbrace{\mu_B\big(\rho_A(x_3,x_4)y_2\big)x_1}_{(d5)}+\underbrace{\mu_B(y_1)([x_2,x_3,x_4])}_{(c8)}+\underbrace{\mu_B(y_1)\big(\rho_B(y_2,y_3)x_4\big)}_{(b2)}\\
			&&+\underbrace{\mu_B(y_1)\big(\rho_B(y_4,y_2)x_3\big)}_{(b5)}+\underbrace{\mu_B(y_1)\big(\rho_B(y_3,y_4)x_2\big)}_{(c5)}+\underbrace{y_1\cdot_{B}[y_2,y_3,y_4]}_{(a5)}\\
			&&+\underbrace{y_1\cdot_{B}\big(\rho_A(x_2,x_3)y_4\big)}_{(f15)}+\underbrace{y_1\cdot_{B}\big(\rho_A(x_4,x_2)y_3\big)}_{(f3)}+\underbrace{y_1\cdot_{B}\big(\rho_A(x_3,x_4)y_2\big)}_{(k2)}\\
			&&+\underbrace{\mu_A([x_2,x_3,x_4])y_1}_{(d2)}+\underbrace{\mu_A\big(\rho_B(y_2,y_3)x_4\big)y_1}_{(h1)}+\underbrace{\mu_A\big(\rho_B(y_4,y_2)x_3\big)y_1}_{(h3)}\\
			&&+\underbrace{\mu_A\big(\rho_B(y_3,y_4)x_2\big)y_1}_{(e5)}+\underbrace{\mu_A(x_1)([y_2,y_3,y_4])}_{(e2)}+\underbrace{\mu_A(x_1)\big(\rho_A(x_2,x_3)y_4\big)}_{(b8)}\\
			&&+\underbrace{\mu_A(x_1)(\rho_A(x_4,x_2)y_3\big)}_{(b11)}+\underbrace{\mu_A(x_1)\big(\rho_A(x_3,x_4)y_2\big)}_{(c11)},\\
			(A3)&=&(x_2+y_2)\cdot\big([x_1,x_3,x_4]+\rho_B(y_1,y_3)x_4+\rho_B(y_4,y_1)x_3+\rho_B(y_3,y_4)x_1\\
			&&+[y_1,y_3,y_4]+\rho_A(x_1,x_3)y_4+\rho_A(x_4,x_1)y_3+\rho_A(x_3,x_4)y_1\big)\\
			&=&\underbrace{x_2\cdot_{A}[x_1,x_3,x_4]}_{(a3)}+\underbrace{x_2\cdot_{A}\rho_B(y_1,y_3)x_4}_{(f4)}+\underbrace{x_2\cdot_{A}\rho_B(y_4,y_1)x_3}_{(f16)}+\underbrace{x_2\cdot_{A}\rho_B(y_3,y_4)x_1}_{(j3)}\\
			&&+\underbrace{\mu_B([y_1,y_3,y_4])x_2}_{(c6)}+\underbrace{\mu_B\big(\rho_A(x_1,x_3)y_4\big)x_2}_{(g2)}+\underbrace{\mu_B\big(\rho_A(x_4,x_1)y_3\big)x_2}_{(g4)}\\
			&&+\underbrace{\mu_B\big(\rho_A(x_3,x_4)y_1\big)x_2}_{(d3)}+\underbrace{\mu_B(y_2)([x_1,x_3,x_4])}_{(d6)}+\underbrace{\mu_B(y_2)\big(\rho_B(y_1,y_3)x_4\big)}_{(b3)}\\
			&&+\underbrace{\mu_B(y_2)\big(\rho_B(y_4,y_1)x_3\big)}_{(b6)}+\underbrace{\mu_B(y_2)\big(\rho_B(y_3,y_4)x_1\big)}_{(c3)}+\underbrace{y_2\cdot_{B}[y_1,y_3,y_4]}_{(a6)}\\
			&&+\underbrace{y_2\cdot_{B}\big(\rho_A(x_1,x_3)y_4\big)}_{(f12)}+\underbrace{y_2\cdot_{B}\big(\rho_A(x_4,x_1)y_3\big)}_{(f8)}+\underbrace{y_2\cdot_{B}\big(\rho_A(x_3,x_4)y_1\big)}_{(k3)}\\
			&&+\underbrace{\mu_A([x_1,x_3,x_4])y_2}_{(c12)}+\underbrace{\mu_A\big(\rho_B(y_1,y_3)x_4\big)y_2}_{(h2)}+\underbrace{\mu_A\big(\rho_B(y_4,y_1)x_3\big)y_2}_{(h4)}\\
			&&+\underbrace{\mu_A\big(\rho_B(y_3,y_4)x_1\big)y_2}_{(e3)}+\underbrace{\mu_A(x_2)([y_1,y_3,y_4])}_{(e6)}+\underbrace{\mu_A(x_2)\big(\rho_A(x_1,x_3)y_4\big)}_{(b9)}\\
			&&+\underbrace{\mu_A(x_2)\big(\rho_A(x_4,x_1)y_3\big)}_{(b12)}+\underbrace{\mu_A(x_2)\big(\rho_A(x_3,x_4)y_1\big)}_{(c9)}.	
		\end{eqnarray*}
		By \eqref{eq:poisson-alg}, we have $$(a1)=(a2)+(a3),~ (a4)=(a5)+(a6).$$
		By \eqref{rep-1}, we have $$(b1)=(b2)+(b3), ~(b4)=(b5)+(b6), ~(b7)=(b8)+(b9),~ (b10)=(b11)+(b12).$$ 
		By \eqref{rep-2}, we have $$(c1)=(c2)+(c3), ~(c4)=(c5)+(c6), ~(c7)=(c8)+(c9),~ (c10)=(c11)+(c12).$$
		Moreover, we have the following equivalences
		\begin{eqnarray*}
			(d1)=(d2)+(d3), ~(d4)=(d5)+(d6) &\Leftrightarrow& \eqref{cond-1};\\
			(e1)=(e2)+(e3), ~(e4)=(e5)+(e6) &\Leftrightarrow& \eqref{cond-2};\\
			(f1)+(f2)=(f3)+(f4),~ (f5)+(f6)=(f7)+(f8) &\Leftrightarrow& \eqref{cond-3},\eqref{cond-4}; \\
			(f9)+(f10)=(f11)+(f12),~ (f13)+(f14)=(f15)+(f16)  &\Leftrightarrow& \eqref{cond-3},\eqref{cond-4};\\
			(g1)+(g2)=0, ~(g3)+(g4)=0 	&\Leftrightarrow& \eqref{cond-5};\\
			(h1)+(h2)=0,~ (h3)+(h4)=0 &\Leftrightarrow& \eqref{cond-6};\\
			(j1)=(j2)+(j3) &\Leftrightarrow& \eqref{cond-7};\\
			(k1)=(k2)+(k3) &\Leftrightarrow& \eqref{cond-8}.
		\end{eqnarray*}
		Therefore, $(A,B,\rho_A,\mu_A,\rho_B,\mu_B)$ is a matched pair of Poisson 3-Lie algebras if and only if $(A\oplus B,\cdot,[\cdot,\cdot,\cdot])$ is a Poisson 3-Lie algebra. The proof is completed.	
	\end{proof}
	\subsection{Manin triples of Poisson 3-Lie algebras}\label{subsec:43}
	\begin{defi}
		Let $(A,\cdot,[\cdot,\cdot,\cdot])$ be a Poisson 3-Lie algebra and $B$ be a subspace of $A$. If for all $x,y,z\in B$ ,	$x\cdot y \in B$ and $[x,y,z] \in B$, then $(B,\cdot,[\cdot,\cdot,\cdot])$ is called a \textbf{Poisson 3-Lie subalgebra} of $(A,\cdot,[\cdot,\cdot,\cdot])$.
	\end{defi}
	\begin{defi}
		Let $(A,\cdot,[\cdot,\cdot,\cdot])$ be a Poisson 3-Lie algebra. A bilinear form $\mathcal{B}:A\otimes A\rightarrow \mathbb{F}$  is called \textbf{invariant }if it satisfies
		\begin{equation*}
			\mathcal{B}(x\cdot y,z)=\mathcal{B}(x,y\cdot z), \quad  \mathcal{B}([x,y,z],u)=-\mathcal{B}([x,y,u],z), \quad \forall~ x,y,z,u\in A.
		\end{equation*}
		A Poisson 3-Lie algebra $(A,\cdot,[\cdot,\cdot,\cdot])$ is called \textbf{pseudo-metric} if there is a nondegenerate symmetric invariant bilinear form on $(A,\cdot,[\cdot,\cdot,\cdot])$. We denote it by $(A,\mathcal{B})$.
	\end{defi}
	The Manin triples of Poisson 3-Lie algebras should be both  Manin triples of 3-Lie algebras \cite{Bai2019} and  Manin triples of commutative associative algebras \cite{Bai2010}. In the following, we give the definition of Manin triples of Poisson 3-Lie algebras. For simplicity, the Poisson 3-Lie algebra $(A,\cdot,[\cdot,\cdot,\cdot])$ is denoted by $A$.
	\begin{defi}\label{Manin-triple}
		A \textbf{Manin triple of Poisson 3-Lie algebras} is a triple $\big((A,\mathcal{B}), A_1, A_2\big)$ consisting of a pseudo-metric Poisson 3-Lie algebra $(A,\mathcal{B})$, and two Poisson 3-Lie algebras $A_1$ and $A_2$  that satisfying:
		
		(a) $A_1$ and $A_2$ are Poisson 3-Lie subalgebras of $A$ that are isotropic with respect to $\mathcal{B}$, that is, $\mathcal{B}(x_1, y_1)=\mathcal{B}(x_2, y_2)=0$, for all $x_1, y_1\in A_1$ and $x_2, y_2\in A_2$;
		
		(b) $A = A_1 \oplus A_2$ as the direct sum of vector spaces;
		
		(c) For all $x_1, y_1 \in A_1$ and $x_2, y_2\in A_2$,  $\mathrm{pr}_1[x_1, y_1, x_2] = 0$ and $\mathrm{pr}_2[x_2, y_2, x_1] = 0$, where $\mathrm{pr}_1$ and $\mathrm{pr}_2$ are the projections from $A_1 \oplus A_2$ to $A_1$ and $A_2$ respectively.
	\end{defi}
	\begin{defi}
		A \textbf{homomorphism} between two Manin triples of Poisson 3-Lie algebras $\big((A,\mathcal{B}_1), A_1, A_2\big)$ and $\big((A',\mathcal{B}_2), A'_1, A'_2\big)$ is a homomorphism $f: A \rightarrow A'$ of Poisson 3-Lie algebras satisfying
		\begin{equation*}
			f(A_1)\subset A'_1, \quad f(A_2)\subset A'_2, \quad \mathcal{B}_1(x_1, x_2)=\mathcal{B}_2(f(x_1),f(x_2)), \quad \forall~ x_1, x_2\in A.
		\end{equation*}
	\end{defi}
	Let $(A,\cdot, [\cdot,\cdot,\cdot])$ and $(A^*,\circ,[\cdot,\cdot,\cdot]^*)$ be two Poisson 3-Lie algebras. On $A\oplus A^*$, there is a natural nondegenerate symmetric bilinear form $\mathcal{B}$ given by
	\begin{equation}\label{bilinear}
		\mathcal{B}(x+\xi, y+\eta)=\langle x,\eta\rangle + \langle y,\xi\rangle, \quad \forall~ x, y \in A, \xi, \eta \in A^*.
	\end{equation}
	Then $A$ and $A^*$ are isotropic with respect to $\mathcal{B}$, that is, $\mathcal{B}(x,y)=\mathcal{B}(\xi,\eta)=0$.
	
	Define the operations $[\cdot,\cdot,\cdot]_{A\oplus A^*}:\otimes^3(A\oplus A^*)\rightarrow A\oplus A^*$ and $\cdot_{A\oplus A^*}:\otimes^2(A\oplus A^*)\rightarrow A\oplus A^*$  by 
	\begin{eqnarray}
		[x+\xi,y+\eta,z+\zeta]_{A\oplus A^*}&=&[x,y,z]+\mathrm{ad}^*_{x,y}\zeta-\mathrm{ad}^*_{x,z}\eta+\mathrm{ad}^*_{y,z}\xi
		\nonumber \\
		&&+[\xi,\eta,\zeta]^*+\mathfrak{ad}^*_{\xi,\eta}z-\mathfrak{ad}^*_{\xi,\zeta}y+\mathfrak{ad}^*_{\eta,\zeta}x,\label{Manin-1} \\
		(x+\xi)\cdot_{A\oplus A^*}(y+\eta)&=&x\cdot y -\mathcal{L}^*(x)\eta-\mathcal{L}^*(x)\xi+\xi\circ\eta-L^*(\xi) y-L^*(\eta) x,\label{Manin-2}
	\end{eqnarray}
	for all $x,y,z\in A, \xi,\eta,\zeta\in A^*$, where $(A^*,\mathrm{ad}^*,-\mathcal{L}^*)$ and $(A,\mathfrak{ad}^*,-L^*)$ are the coadjoint representation of $(A,\cdot,[\cdot,\cdot,\cdot])$ and $(A^*,\circ,[\cdot,\cdot,\cdot]^*)$, respectively. 
	\begin{lem}\label{lem:Manin}
		With the above notations, if $(A\oplus A^*,\cdot_{A\oplus A^*},[\cdot,\cdot,\cdot]_{A\oplus A^*})$ is a Poisson 3-Lie algebra, then $\big((A\oplus A^*,\mathcal{B}), A, A^*\big)$ is a Manin triple of Poisson 3-Lie algebras. We call it  \textbf{the standard Manin triple of Poisson 3-Lie algebras}.
	\end{lem}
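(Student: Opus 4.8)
The plan is to verify the three conditions of Definition~\ref{Manin-triple} together with the pseudo-metric requirement, taking the Poisson 3-Lie structure on $A\oplus A^*$ as given by hypothesis. Observe first that the bilinear form $\mathcal{B}$ of \eqref{bilinear} is manifestly symmetric, and it is nondegenerate precisely because the canonical pairing $\langle\cdot,\cdot\rangle$ between $A$ and $A^*$ is. Isotropy of $A$ and $A^*$ and the decomposition $A\oplus A^*=A\oplus A^*$ (condition (a), second part, and condition (b)) are immediate from \eqref{bilinear} and the underlying vector space splitting. Thus the genuine content lies in three places: that $A$ and $A^*$ are Poisson 3-Lie subalgebras, that $\mathcal{B}$ is invariant, and that the mixed bracket condition (c) holds.

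For the subalgebra property I would simply restrict \eqref{Manin-1} and \eqref{Manin-2} to each factor: setting $\xi=\eta=\zeta=0$ collapses them to $[x,y,z]$ and $x\cdot y$, so $A$ is closed, while setting $x=y=z=0$ collapses them to $[\xi,\eta,\zeta]^*$ and $\xi\circ\eta$, so $A^*$ is closed. Condition (c) is then a direct evaluation of \eqref{Manin-1}: for $x_1,y_1\in A$ and $x_2\in A^*$ every term except $\mathrm{ad}^*_{x_1,y_1}x_2$ vanishes, giving $[x_1,y_1,x_2]_{A\oplus A^*}=\mathrm{ad}^*_{x_1,y_1}x_2\in A^*$, whence $\mathrm{pr}_1[x_1,y_1,x_2]=0$; symmetrically $[x_2,y_2,x_1]_{A\oplus A^*}=\mathfrak{ad}^*_{x_2,y_2}x_1\in A$ for $x_2,y_2\in A^*$ and $x_1\in A$, so $\mathrm{pr}_2[x_2,y_2,x_1]=0$.

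The main work, and the step I expect to carry the bulk of the computation, is verifying the two invariance identities $\mathcal{B}(X\cdot Y,Z)=\mathcal{B}(X,Y\cdot Z)$ and $\mathcal{B}([X,Y,Z],W)=-\mathcal{B}([X,Y,W],Z)$. By bilinearity it suffices to test each identity on homogeneous arguments, each lying in $A$ or in $A^*$; since $\mathcal{B}$ pairs $A$ only against $A^*$, most placements contribute nothing and only the coadjoint terms of \eqref{Manin-1} and \eqref{Manin-2} survive. The surviving cases reduce to the defining sign conventions \eqref{3-lie-dual} and \eqref{com-asso-dual} of the coadjoint representations together with the commutativity of $\cdot$ and $\circ$; for instance, for $x,y\in A$ and $\zeta\in A^*$ one finds $\mathcal{B}(x\cdot y,\zeta)=\langle x\cdot y,\zeta\rangle=-\langle x,\mathcal{L}^*(y)\zeta\rangle=\mathcal{B}(x,y\cdot\zeta)$, and for $x_1,y_1,z\in A$ and $\zeta\in A^*$ one finds $\mathcal{B}([x_1,y_1,z],\zeta)=\langle[x_1,y_1,z],\zeta\rangle=-\langle z,\mathrm{ad}^*_{x_1,y_1}\zeta\rangle=-\mathcal{B}([x_1,y_1,\zeta],z)$. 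The remaining placements follow the same template. With $\mathcal{B}$ nondegenerate, symmetric, and invariant, $(A\oplus A^*,\mathcal{B})$ is pseudo-metric, and assembling this with the subalgebra property and condition (c) yields the standard Manin triple. The only real obstacle is the bookkeeping of the invariance cases, which is mechanical once the coadjoint conventions are fixed.
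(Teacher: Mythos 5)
Your proposal is correct and follows essentially the same route as the paper's own proof, which likewise verifies invariance of $\mathcal{B}$ directly from \eqref{bilinear}, \eqref{Manin-1}, \eqref{Manin-2}, notes that $A$ and $A^*$ are isotropic Poisson 3-Lie subalgebras, and checks condition (c) of Definition~\ref{Manin-triple}; you merely spell out the case analysis and sign bookkeeping that the paper compresses into ``it is straightforward to see.'' The only point worth flagging is that your computation of $y\cdot\zeta$ tacitly uses the intended term $-\mathcal{L}^*(y)\xi$ in \eqref{Manin-2} (where the paper's displayed formula has an apparent typo $-\mathcal{L}^*(x)\xi$), which is the correct reading given the coadjoint representation $(A^*,\mathrm{ad}^*,-\mathcal{L}^*)$.
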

	\begin{proof}
		By \eqref{bilinear}, \eqref{Manin-1}, and \eqref{Manin-2}, for all $x,y,z,u\in A,~ \xi,\eta,\zeta,\gamma\in A^*$, we have $$\mathcal{B}([x+\xi,y+\eta,z+\zeta]_{A\oplus A^*},u+\gamma)=-\mathcal{B}([x+\xi,y+\eta,u+\gamma]_{A\oplus A^*},z+\zeta),$$
		$$\mathcal{B}\big((x+\xi)\cdot_{A\oplus A^*}(y+\eta),z+\zeta\big)=\mathcal{B}\big(x+\xi,(y+\eta)\cdot_{A\oplus A^*}(z+\zeta)\big).$$  Then $(A\oplus A^*,\mathcal{B})$ is a  pseudo-metric Poisson 3-Lie algebra. It is straightforward to see that $A$ and $A^*$ are Poisson 3-Lie subalgebras of $A\oplus A^*$, and they are isotropic with respect to $\mathcal{B}$.  Moreover, the bracket $[\cdot,\cdot,\cdot]_{A\oplus A^*}$ satisfies (c) in Definition \ref{Manin-triple}. Therefore, $\big((A\oplus A^*,\mathcal{B}), A, A^*\big)$ is a Manin triple of Poisson 3-Lie algebras.
	\end{proof}
	\begin{thm}\label{Manin triple}
		Let $(A, \cdot,[\cdot,\cdot,\cdot])$ and $(A^*,\circ,[\cdot,\cdot,\cdot]^*)$ be two Poisson 3-Lie algebras. Then $\big((A\oplus A^*,\mathcal{B}), A, A^*\big)$ is a standard Manin triple of Poisson 3-Lie algebras if and only if \\$(A, A^*, \mathrm{ad}^*,-\mathcal{L}^*,\mathfrak{ad}^*,-L^*)$ is a matched pair of Poisson 3-Lie algebras.
	\end{thm}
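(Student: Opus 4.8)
The plan is to reduce the statement to Theorem~\ref{matched-pair} and Lemma~\ref{lem:Manin} by recognizing that the product~\eqref{Manin-2} and the bracket~\eqref{Manin-1} on $A\oplus A^*$ are nothing but the matched-pair operations~\eqref{com-asso-sum} and~\eqref{3-lie-sum} specialized to the coadjoint data $\mu_A=-\mathcal{L}^*$, $\rho_A=\mathrm{ad}^*$, $\mu_B=-L^*$, $\rho_B=\mathfrak{ad}^*$. First I would check this identification term by term. For the commutative product, substituting $\mu_A(x)=-\mathcal{L}^*(x)$ and $\mu_B(\xi)=-L^*(\xi)$ into~\eqref{com-asso-sum} reproduces~\eqref{Manin-2}. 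For the $3$-Lie bracket, substituting $\rho_A=\mathrm{ad}^*$ and $\rho_B=\mathfrak{ad}^*$ into~\eqref{3-lie-sum} and using the skew-symmetry of $\mathrm{ad}^*$ and $\mathfrak{ad}^*$ in their first two arguments (so that, e.g., $\mathrm{ad}^*_{x_3,x_1}=-\mathrm{ad}^*_{x_1,x_3}$ and $\mathfrak{ad}^*_{\zeta,\xi}=-\mathfrak{ad}^*_{\xi,\zeta}$) reconciles the signs with~\eqref{Manin-1}; all six mixed terms then line up.

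Second, I would invoke Proposition~\ref{pro:poisson-dual}: the coadjoint representations $(A^*,\mathrm{ad}^*,-\mathcal{L}^*)$ and $(A,\mathfrak{ad}^*,-L^*)$ are genuine representations of the Poisson $3$-Lie algebras $A$ and $A^*$, so the standing representation hypotheses in the definition of a matched pair of Poisson $3$-Lie algebras are automatically fulfilled. With the operations identified as above, Theorem~\ref{matched-pair}, applied with $B=A^*$ and these four maps, then yields that $(A\oplus A^*,\cdot_{A\oplus A^*},[\cdot,\cdot,\cdot]_{A\oplus A^*})$ is a Poisson $3$-Lie algebra if and only if $(A,A^*,\mathrm{ad}^*,-\mathcal{L}^*,\mathfrak{ad}^*,-L^*)$ is a matched pair of Poisson $3$-Lie algebras.

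Finally I would close the loop with Lemma~\ref{lem:Manin}. By construction, $\big((A\oplus A^*,\mathcal{B}),A,A^*\big)$ being a standard Manin triple means precisely that $A\oplus A^*$ equipped with~\eqref{Manin-1} and~\eqref{Manin-2} is a pseudo-metric Poisson $3$-Lie algebra; conversely, Lemma~\ref{lem:Manin} shows that once this is a Poisson $3$-Lie algebra it is automatically a standard Manin triple with respect to $\mathcal{B}$. Combining the two equivalences gives the chain $\text{standard Manin triple}\Longleftrightarrow A\oplus A^*\text{ is a Poisson }3\text{-Lie algebra}\Longleftrightarrow\text{matched pair}$, which is exactly the claim.

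The main obstacle is the sign bookkeeping in the first step: because the coadjoint maps carry the minus signs $-\mathcal{L}^*$ and $-L^*$ while the $3$-Lie bracket is skew, one must verify carefully that each of the six mixed terms in~\eqref{Manin-1} and~\eqref{Manin-2} matches the corresponding term of~\eqref{3-lie-sum} and~\eqref{com-asso-sum}. Once this term-by-term identification is secured, the remainder is a formal application of Theorem~\ref{matched-pair} and Lemma~\ref{lem:Manin}, with no further computation required.
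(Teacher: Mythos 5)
Your proposal is correct and takes essentially the same route as the paper, whose entire proof is to combine Lemma~\ref{lem:Manin} with Theorem~\ref{matched-pair}. Your term-by-term identification of \eqref{Manin-1} and \eqref{Manin-2} with \eqref{3-lie-sum} and \eqref{com-asso-sum} under $\rho_A=\mathrm{ad}^*$, $\mu_A=-\mathcal{L}^*$, $\rho_B=\mathfrak{ad}^*$, $\mu_B=-L^*$, together with the appeal to Proposition~\ref{pro:poisson-dual}, merely spells out the details the paper leaves implicit.
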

	\begin{proof}
		Combining Lemma \ref{lem:Manin} and Theorem \ref{matched-pair}, we can get the conclusion directly.
	\end{proof}
	\subsection{Double construction Poisson 3-Lie bialgebras}\label{subsec:44}
	\begin{defi}
		(\cite{X. Ni}) A \textbf{cocommutative coassociative coalgebra} is a pair $(A, \Delta)$, where $A$ is a vector space  and $\Delta:A\rightarrow A\otimes A$ is a linear map satisfying
		\begin{equation}\label{eq:com-asso-coal}
			\tau\Delta(x)=\Delta(x), \quad (\Delta\otimes 1)\Delta(x)=(1\otimes\Delta)\Delta(x), \quad \forall~x\in A,
		\end{equation}
		where $\tau:A\otimes A\rightarrow A\otimes A$, $x\otimes y\mapsto y\otimes x$.
	\end{defi}
	\begin{pro}\label{pro:commu-asso-coal}
		$($\cite{X. Ni}$)$	Let $A$ be a vector space and $\Delta:A\rightarrow A\otimes A$ be a linear map. If $\Delta^*:A^*\otimes A^*\rightarrow A^*$ is a linear map satisfying $$\langle \Delta^{*}(\xi\otimes \eta),x\rangle=\langle \xi\otimes \eta, \Delta(x)\rangle,\quad \forall~\xi, \eta \in A^*, ~x\in A,$$ 
		then $(A,\Delta)$ is a cocommutative coassociative coalgebra if and only if $(A^*,\Delta^*)$ is a commutative associative algebra.
	\end{pro}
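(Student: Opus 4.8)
The plan is to translate each of the two defining identities in \eqref{eq:com-asso-coal} into the corresponding identity for $\Delta^*$ by pairing against elements of $A$ and invoking the nondegeneracy of the canonical pairing $\langle\cdot,\cdot\rangle$, which is extended to tensor powers via \eqref{eq:action} and is nondegenerate because $A$ is finite-dimensional. Throughout I would write $\Delta(x)=\sum x_{(1)}\otimes x_{(2)}$, so that $\langle\xi\otimes\eta,\Delta(x)\rangle=\sum\langle\xi,x_{(1)}\rangle\langle\eta,x_{(2)}\rangle$, and use the defining relation $\langle\Delta^*(\xi\otimes\eta),x\rangle=\langle\xi\otimes\eta,\Delta(x)\rangle$ as the sole bridge between the two sides.

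First I would handle cocommutativity. For all $\xi,\eta\in A^*$ and $x\in A$ we have $\langle\Delta^*(\eta\otimes\xi),x\rangle=\langle\eta\otimes\xi,\Delta(x)\rangle=\langle\xi\otimes\eta,\tau\Delta(x)\rangle$, whereas $\langle\Delta^*(\xi\otimes\eta),x\rangle=\langle\xi\otimes\eta,\Delta(x)\rangle$. Since these equalities hold for every $x$ and every pair $\xi,\eta$, nondegeneracy of the pairing shows that the commutativity $\Delta^*(\xi\otimes\eta)=\Delta^*(\eta\otimes\xi)$ for all $\xi,\eta$ is equivalent to $\Delta(x)=\tau\Delta(x)$ for all $x$; that is, $\Delta^*$ is commutative if and only if $\Delta$ is cocommutative.

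Next I would treat (co)associativity, the only slightly delicate step, since it requires pairing $A^*\otimes A^*\otimes A^*$ against $A\otimes A\otimes A$. Computing one side, $\langle\Delta^*(\Delta^*(\xi\otimes\eta)\otimes\zeta),x\rangle=\langle\Delta^*(\xi\otimes\eta)\otimes\zeta,\Delta(x)\rangle=\sum\langle\xi\otimes\eta,\Delta(x_{(1)})\rangle\langle\zeta,x_{(2)}\rangle=\langle\xi\otimes\eta\otimes\zeta,(\Delta\otimes 1)\Delta(x)\rangle$, where the middle step applies the bridge relation to the factor $\Delta^*(\xi\otimes\eta)$ paired with $x_{(1)}$. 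Symmetrically, $\langle\Delta^*(\xi\otimes\Delta^*(\eta\otimes\zeta)),x\rangle=\langle\xi\otimes\eta\otimes\zeta,(1\otimes\Delta)\Delta(x)\rangle$. Again by nondegeneracy, these agree for all $\xi,\eta,\zeta,x$ precisely when $(\Delta\otimes 1)\Delta(x)=(1\otimes\Delta)\Delta(x)$ for all $x$, i.e. $\Delta^*$ is associative if and only if $\Delta$ is coassociative. Combining the two equivalences yields the statement. The main obstacle is purely bookkeeping: keeping the three tensor factors in the correct order when transposing $\Delta^*$ twice, which the identification \eqref{eq:action} renders routine once the Sweedler notation is fixed.
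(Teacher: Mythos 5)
Your proof is correct: the paper states this proposition without proof (citing \cite{X. Ni}), and your argument---transporting each of the two identities in \eqref{eq:com-asso-coal} across the defining relation for $\Delta^*$ and invoking nondegeneracy of the pairing, valid here since $A$ is finite-dimensional---is exactly the standard dualization argument, the same one the paper itself carries out for the analogous Proposition \ref{pro:poisson-3-lie-coal}. In particular your key computation $\langle\Delta^*(\Delta^*(\xi\otimes\eta)\otimes\zeta),x\rangle=\langle\xi\otimes\eta\otimes\zeta,(\Delta\otimes 1)\Delta(x)\rangle$ and its mirror are right, and the bookkeeping with \eqref{eq:action} is handled correctly.
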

	\begin{defi}
		(\cite{Bai2010}) A \textbf{commutative and cocommutative infinitesimal bialgebra} is a triple $(A,\cdot,\Delta)$, where $(A,\cdot)$ is a commutative associative algebra and $(A,\Delta)$ is a cocommutative coassociative coalgebra satisfying
		\begin{equation}\label{eq:infi-bialg}
			\Delta(x\cdot y)=(\mathcal{L}_x\otimes 1)\Delta(y)+(1\otimes \mathcal{L}_y)\Delta(x), \quad \forall~ x,y \in A,
		\end{equation} 
		where $\mathcal{L}_x$  denotes the left multiplication of $x$ and $1$ denotes the identity map.
	\end{defi}
	Let $A$ be a vector space. For any $r=x_1\otimes x_2\otimes \cdots\otimes x_n\in \otimes^n A$, $x_i\in A$, $1\leq i\leq n$. Define the switching operator $\tau_{ij}:\otimes^n A\rightarrow\otimes^n A$ as follows:
	\begin{equation*}
		\tau_{ij}(r)=x_1\otimes\cdots\otimes x_j\otimes \cdots\otimes x_i\otimes\cdots\otimes x_n, \quad 1\leq i<j\leq n.
	\end{equation*}
	\begin{pro}\label{pro:3-Lie-coal}
		$($\cite{Bai2019}$)$	Let $A$ be a vector space and $\delta:A\rightarrow A\otimes A\otimes A$ be a linear map. If $\delta^*:A^*\otimes A^*\otimes A^*\rightarrow A^*$ is a linear map satisfying $$\langle \delta^{*}(\xi_1\otimes \xi_2\otimes\xi_3),x\rangle=\langle \xi_1\otimes \xi_2\otimes\xi_3, \delta(x)\rangle,\quad \forall~\xi_1, \xi_2, \xi_3 \in A^*, ~x\in A,$$ 
		then $\delta^*$ defines a 3-Lie algebra on $A^*$ if and only if $\delta^*$ is a skew-symmetric operation and  $\delta$ satisfies
		\begin{equation}\label{eq:3-Lie-coal}
			(\delta\otimes 1\otimes 1)\delta(x)+\tau_{23}\tau_{12}\big((1\otimes\delta\otimes 1)\delta(x)\big)+\tau_{13} \tau_{24}\big((1\otimes 1\otimes\delta)\delta(x)\big)=(1\otimes 1\otimes\delta)\delta(x).
		\end{equation}
	\end{pro}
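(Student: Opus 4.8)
The plan is to recognize that a 3-Lie algebra structure on $A^*$ with ternary operation $\delta^*$ amounts to exactly two requirements: skew-symmetry of $\delta^*$, and the Filippov-Jacobi identity \eqref{Jac} written for $\delta^*$. The first requirement is precisely one of the two conditions in the statement, so the entire content of the proposition is that the Filippov-Jacobi identity for $\delta^*$ is equivalent, under dualization, to \eqref{eq:3-Lie-coal} for $\delta$. I would establish this equivalence by pairing the Filippov-Jacobi identity, written for $[\cdot,\cdot,\cdot]^*:=\delta^*$, against an arbitrary element $x\in A$ using the fundamental pairing \eqref{eq:action} between $\otimes^5 A^*$ and $\otimes^5 A$, and showing term-by-term that the paired identity reproduces \eqref{eq:3-Lie-coal}.

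Concretely, writing $\delta(x)=x_{(1)}\otimes x_{(2)}\otimes x_{(3)}$ in Sweedler notation and repeatedly applying the defining relation $\langle \delta^{*}(\xi_1\otimes\xi_2\otimes\xi_3),x\rangle=\langle \xi_1\otimes\xi_2\otimes\xi_3,\delta(x)\rangle$, I would push each nested $\delta^*$ onto $\delta$. Pairing $[[\xi_1,\xi_2,\xi_3]^*,\xi_4,\xi_5]^*$ with $x$ applies the inner bracket in the first tensor slot, so it acts on $x_{(1)}$ and yields $\langle \xi_1\otimes\cdots\otimes\xi_5,(\delta\otimes 1\otimes 1)\delta(x)\rangle$; pairing the left-hand side $[\xi_1,\xi_2,[\xi_3,\xi_4,\xi_5]^*]^*$ puts the inner bracket in the third slot and gives $(1\otimes 1\otimes\delta)\delta(x)$. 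The two remaining right-hand terms $[\xi_3,[\xi_1,\xi_2,\xi_4]^*,\xi_5]^*$ and $[\xi_3,\xi_4,[\xi_1,\xi_2,\xi_5]^*]^*$ produce $(1\otimes\delta\otimes 1)\delta(x)$ and $(1\otimes 1\otimes\delta)\delta(x)$, but with the five dual arguments appearing in the permuted orders $\xi_3,\xi_1,\xi_2,\xi_4,\xi_5$ and $\xi_3,\xi_4,\xi_1,\xi_2,\xi_5$ rather than $\xi_1,\ldots,\xi_5$.

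The key step, and the main bookkeeping obstacle, is to correct these permutations so that every term is paired against the same tensor $\xi_1\otimes\cdots\otimes\xi_5$. Since pre-composing a tensor with a switching operator $\tau_{ij}$ is adjoint to reordering the corresponding dual arguments, the order $\xi_3,\xi_1,\xi_2,\xi_4,\xi_5$ is straightened by applying $\tau_{23}\tau_{12}$ to $(1\otimes\delta\otimes 1)\delta(x)$, and the order $\xi_3,\xi_4,\xi_1,\xi_2,\xi_5$ is straightened by applying $\tau_{13}\tau_{24}$ to $(1\otimes 1\otimes\delta)\delta(x)$. Verifying that these are the correct operators requires following the position of each tensor leg through the successive transpositions; I would check it by tracking which slot each factor occupies after each swap. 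Once this is confirmed, the Filippov-Jacobi identity paired with $x$ reads
\[
(1\otimes 1\otimes\delta)\delta(x) = (\delta\otimes 1\otimes 1)\delta(x) + \tau_{23}\tau_{12}\big((1\otimes\delta\otimes 1)\delta(x)\big) + \tau_{13}\tau_{24}\big((1\otimes 1\otimes\delta)\delta(x)\big),
\]
which is exactly \eqref{eq:3-Lie-coal}.

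Finally, the element $x\in A$ and the covectors $\xi_1,\ldots,\xi_5\in A^*$ are arbitrary, so nondegeneracy of the pairing \eqref{eq:action} shows that the paired Filippov-Jacobi identity holds for all arguments precisely when the tensor identity \eqref{eq:3-Lie-coal} holds. Since a 3-Lie algebra structure is nothing but a skew-symmetric bracket satisfying Filippov-Jacobi, and the skew-symmetry of $\delta^*$ is the remaining hypothesis, this yields the claimed equivalence.
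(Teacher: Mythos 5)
Your proposal is correct and follows the standard dualization argument for this result, which the paper itself quotes from \cite{Bai2019} without reproducing a proof: you pair the Filippov--Jacobi identity \eqref{Jac} for $\delta^*$ against $x$, push each nested $\delta^*$ onto $\delta$ via the defining adjunction, and straighten the permuted argument orders with $\tau_{23}\tau_{12}$ and $\tau_{13}\tau_{24}$, and your leg-tracking checks out ($\tau_{12}$ followed by $\tau_{23}$ sends $t_1\otimes t_2\otimes t_3\otimes t_4\otimes t_5$ to $t_2\otimes t_3\otimes t_1\otimes t_4\otimes t_5$, which matches the order $\xi_3,\xi_1,\xi_2,\xi_4,\xi_5$ arising from $[\xi_3,[\xi_1,\xi_2,\xi_4]^*,\xi_5]^*$, while $\tau_{13}\tau_{24}$ yields $t_3\otimes t_4\otimes t_1\otimes t_2\otimes t_5$, matching $\xi_3,\xi_4,\xi_1,\xi_2,\xi_5$). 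The concluding nondegeneracy step is also valid, since the paper assumes all vector spaces are finite-dimensional, so the simple tensors $\xi_1\otimes\cdots\otimes\xi_5$ separate points of $\otimes^5 A$.
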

	\begin{defi}
		(\cite{R. Bai})	A \textbf{3-Lie coalgebra} is a pair $(A,\delta)$, where $A$ is a vector space  and $\delta:A\rightarrow A\otimes A\otimes A$ is a linear map satisfying \eqref{eq:3-Lie-coal}.
	\end{defi}
	\begin{defi}
		A \textbf{Poisson 3-Lie coalgebra} is a triple $(A,\Delta,\delta)$, where $(A,\delta)$ is a 3-Lie coalgebra and $(A,\Delta)$ is a cocommutative coassociative coalgebra that satisfies
		\begin{equation}\label{Poisson-coalg}
			(\Delta\otimes 1\otimes 1)\delta(x)=(1\otimes \delta)\Delta(x)+(\tau_{12}\otimes 1\otimes 1)(1\otimes\delta)\Delta(x),\quad\forall~ x\in A.
		\end{equation}
	\end{defi}
	\begin{pro}\label{pro:poisson-3-lie-coal}
		Let $A$ be a vector space, $(A,\delta)$ be a 3-Lie coalgebra and $(A,\Delta)$ be a cocommutative coassociative coalgebra. Then $(A,\Delta,\delta)$ is a Poisson 3-Lie coalgebra if and only if $(A^*,\Delta^*,\delta^*)$ is a Poisson 3-Lie algebra.
	\end{pro}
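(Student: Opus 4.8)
The plan is to reduce the statement to the compatibility condition alone, since the two underlying structures are already governed by the earlier dualization results. First I would invoke Proposition \ref{pro:commu-asso-coal} to identify $(A,\Delta)$ being a cocommutative coassociative coalgebra with $(A^*,\Delta^*)$ being a commutative associative algebra, and Proposition \ref{pro:3-Lie-coal} to identify $(A,\delta)$ being a 3-Lie coalgebra with $(A^*,\delta^*)$ being a 3-Lie algebra. Thus it remains only to prove that, with these two structures in place, the coalgebra compatibility \eqref{Poisson-coalg} is equivalent to the generalized Leibniz rule \eqref{eq:poisson-alg} for the multiplication $\Delta^*$ and the bracket $\delta^*$ on $A^*$.

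The key step is to dualize \eqref{Poisson-coalg} by pairing both of its sides, which lie in $A^{\otimes 4}$, against an arbitrary element $\omega\otimes\xi\otimes\eta\otimes\zeta\in (A^*)^{\otimes 4}$ via \eqref{eq:action}. Using the defining relations $\langle\Delta^*(\omega\otimes\xi),x\rangle=\langle\omega\otimes\xi,\Delta(x)\rangle$ and $\langle\delta^*(\xi\otimes\eta\otimes\zeta),x\rangle=\langle\xi\otimes\eta\otimes\zeta,\delta(x)\rangle$, I would evaluate the three terms of the Leibniz rule on $A^*$ one at a time. A direct check gives
\begin{align*}
\langle\delta^*\big(\Delta^*(\omega\otimes\xi)\otimes\eta\otimes\zeta\big),x\rangle&=\langle\omega\otimes\xi\otimes\eta\otimes\zeta,(\Delta\otimes 1\otimes 1)\delta(x)\rangle,\\
\langle\Delta^*\big(\omega\otimes\delta^*(\xi\otimes\eta\otimes\zeta)\big),x\rangle&=\langle\omega\otimes\xi\otimes\eta\otimes\zeta,(1\otimes\delta)\Delta(x)\rangle,\\
\langle\Delta^*\big(\xi\otimes\delta^*(\omega\otimes\eta\otimes\zeta)\big),x\rangle&=\langle\omega\otimes\xi\otimes\eta\otimes\zeta,(\tau_{12}\otimes 1\otimes 1)(1\otimes\delta)\Delta(x)\rangle,
\end{align*}
where in the last line the transposition $\tau_{12}$ appears precisely because $\xi$ is paired with the first tensor leg of $\Delta(x)$ while $\omega$ is paired with the first leg produced by $\delta$, so the two outermost slots must be swapped to match the standard order $\omega\otimes\xi\otimes\eta\otimes\zeta$. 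Consequently, the defect of the Leibniz rule on $A^*$, namely $\delta^*(\Delta^*(\omega\otimes\xi)\otimes\eta\otimes\zeta)-\Delta^*(\omega\otimes\delta^*(\xi\otimes\eta\otimes\zeta))-\Delta^*(\xi\otimes\delta^*(\omega\otimes\eta\otimes\zeta))$, paired with $x$, equals the pairing of $\omega\otimes\xi\otimes\eta\otimes\zeta$ with the difference of the two sides of \eqref{Poisson-coalg}.

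Since the pairing between $(A^*)^{\otimes 4}$ and $A^{\otimes 4}$ is nondegenerate and $\omega,\xi,\eta,\zeta\in A^*$, $x\in A$ are arbitrary, this identity shows that \eqref{eq:poisson-alg} holds for $(A^*,\Delta^*,\delta^*)$ if and only if \eqref{Poisson-coalg} holds for $(A,\Delta,\delta)$, which yields both implications at once. I expect the main obstacle to be purely bookkeeping: correctly tracking the positions of the four tensor legs as they pass through $\Delta$ and $\delta$, and confirming that the only rearrangement forced in the third term is exactly $\tau_{12}$, matching the switching operator in \eqref{Poisson-coalg}. Everything else is immediate from the definitions of $\Delta^*$ and $\delta^*$ together with the two earlier propositions.
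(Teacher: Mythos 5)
Your proposal is correct and follows essentially the same route as the paper: both reduce to the compatibility condition via Propositions \ref{pro:commu-asso-coal} and \ref{pro:3-Lie-coal}, then pair the defect of the generalized Leibniz rule on $A^*$ against an arbitrary $x\in A$ (equivalently, pair \eqref{Poisson-coalg} against decomposable tensors in $(A^*)^{\otimes 4}$), with the $\tau_{12}$ accounting for the swapped outer legs in the third term, and conclude by nondegeneracy of the pairing in finite dimensions. Your term-by-term verification of the three dual identities is just a more explicit presentation of the single chain of equalities in the paper's proof.
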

	\begin{proof}
		By Proposition \ref{pro:commu-asso-coal} and Proposition \ref{pro:3-Lie-coal}, we have $(A^*,\Delta^*)$ is a commutative associative algebra and $(A^*,\delta^*)$ is a 3-Lie algebra. Next we prove that $\Delta^*$ and $\delta^*$ satisfy \eqref{eq:poisson-alg}. For all $x\in A, a^*,b^*,c^*,d^* \in A^*$, we have
		\begin{eqnarray*}
			&&\langle (\Delta\otimes 1\otimes 1)\delta(x)-(1\otimes \delta)\Delta(x)-(\tau_{12}\otimes 1\otimes 1)(1\otimes\delta)\Delta(x), a^*\otimes b^*\otimes c^*\otimes d^* \rangle \\
			&=&\langle x, \delta^*(\Delta^*\otimes 1\otimes 1)(a^*\otimes b^*\otimes c^*\otimes d^*) \rangle -\langle x, \Delta^*(1\otimes \delta^*)(a^*\otimes b^*\otimes c^*\otimes d^*) \rangle\\
			&&-\langle x, \Delta^*(1\otimes \delta^*)(b^*\otimes a^*\otimes c^*\otimes d^*) \rangle\\
			&=&\langle x, \delta^*\big(\Delta^*(a^*\otimes b^*)\otimes c^*\otimes d^*\big)-\Delta^*\big(a^*\otimes \delta^*(b^*\otimes c^*\otimes d^*)\big) -\Delta^*\big(b^*\otimes \delta^*(a^*\otimes c^*\otimes d^*)\big) \rangle\\
			&=&0  ,
		\end{eqnarray*}
		Thus \eqref{Poisson-coalg} holds if and only if
		\begin{eqnarray*}
			\delta^*\big(\Delta^*(a^*\otimes b^*)\otimes c^*\otimes d^*\big)=\Delta^*\big(a^*\otimes \delta^*(b^*\otimes c^*\otimes d^*)\big) +\Delta^*\big(b^*\otimes\delta^*(a^*\otimes c^*\otimes d^*)\big).
		\end{eqnarray*}
		Therefore, $(A,\Delta,\delta)$ is a Poisson 3-Lie coalgebra if and only if $(A^*,\Delta^*,\delta^*)$ is a Poisson 3-Lie algebra. The proof is completed.
	\end{proof}
	\begin{lem}\label{equival}
		Let $(A,\cdot, [\cdot,\cdot,\cdot])$ and $(A^*,\circ,[\cdot,\cdot,\cdot]^*)$ be two Poisson 3-Lie algebras. Then $(A, A^*, \mathrm{ad}^*,-\mathcal{L}^*,\mathfrak{ad}^*,-L^*)$ is a matched pair of Poisson 3-Lie algebras if and only if \eqref{cond-1}, \eqref{cond-2}, \eqref{cond-3} and \eqref{cond-4} hold for $\rho_A=\mathrm{ad}^*,\mu_A=-\mathcal{L}^*,\rho_B=\mathfrak{ad}^*,\mu_B=-L^*$.
	\end{lem}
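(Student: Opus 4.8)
The forward implication is immediate: by definition a matched pair of Poisson 3-Lie algebras satisfies all of \eqref{cond-1}--\eqref{cond-8}, so in particular \eqref{cond-1}--\eqref{cond-4} hold. The whole content of the lemma is therefore the converse, namely that once $\rho_A=\mathrm{ad}^*$, $\mu_A=-\mathcal{L}^*$, $\rho_B=\mathfrak{ad}^*$, $\mu_B=-L^*$ are the coadjoint representations, the four remaining compatibility conditions \eqref{cond-5}--\eqref{cond-8} are already forced by \eqref{cond-1}--\eqref{cond-4} together with the Poisson 3-Lie structures of $A$ and $A^*$. Note first that the representation axioms \eqref{rep-1}--\eqref{rep-2} demanded in the definition of a matched pair hold here automatically by Proposition \ref{pro:poisson-dual}, so they need no separate verification, and the two underlying matched-pair structures (of commutative associative algebras and of 3-Lie algebras) are exactly what the coadjoint data encode.

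The plan is to exploit the duality symmetry of the configuration. Interchanging the roles of $A$ and $A^*$ sends $\mathrm{ad}^*\leftrightarrow\mathfrak{ad}^*$, $\mathcal{L}^*\leftrightarrow L^*$ and $\cdot\leftrightarrow\circ$, and under this exchange \eqref{cond-1}$\leftrightarrow$\eqref{cond-2}, \eqref{cond-3}$\leftrightarrow$\eqref{cond-4}, \eqref{cond-5}$\leftrightarrow$\eqref{cond-6} and \eqref{cond-7}$\leftrightarrow$\eqref{cond-8}. Hence it suffices to derive \eqref{cond-5} and \eqref{cond-7} from the hypotheses; \eqref{cond-6} and \eqref{cond-8} then follow verbatim by symmetry. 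To treat each of these I would pass to the pairing $\langle\cdot,\cdot\rangle$ between $A$ and $A^*$ and rewrite every occurrence of a coadjoint operator through its defining identities \eqref{3-lie-dual} and \eqref{com-asso-dual}. For example, \eqref{cond-7} is the statement that each $\mathfrak{ad}^*_{\xi,\eta}$ acts as a derivation of $(A,\cdot)$, which after dualization becomes an identity expressed purely in terms of $[\cdot,\cdot,\cdot]^*$ and the coproduct dual to $\cdot$.

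Concretely, for \eqref{cond-7} I would first use \eqref{cond-3} to replace $x_1\cdot(\mathfrak{ad}^*_{\xi,\eta}x_2)$ and $x_2\cdot(\mathfrak{ad}^*_{\xi,\eta}x_1)$ by $\mathfrak{ad}^*_{\mu_A(x_1)\xi,\eta}x_2$ and $\mathfrak{ad}^*_{\mu_A(x_2)\xi,\eta}x_1$, reducing the claim to a single residual identity among coadjoint actions; I would then verify that identity by dualizing it and applying the generalized Leibniz rule \eqref{eq:poisson-alg} in $A^*$, together with skew-symmetry and cocommutativity of the two coproducts. For \eqref{cond-5} I would transport it to the pairing in the same way, where it turns into a relation between $[\cdot,\cdot,\cdot]$ and the coproduct dual to $\circ$, and match it term-by-term against the dualized form of \eqref{cond-1}.

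The main obstacle will be precisely this term-by-term matching: after dualization each of \eqref{cond-5} and \eqref{cond-7} unfolds into a sum over the Sweedler components of the relevant coproduct, and one must show that, modulo the already-available \eqref{cond-1}--\eqref{cond-4}, the cocommutativity of the coproducts, and the skew-symmetry of the brackets, every term cancels. Keeping the bookkeeping consistent—tracking which bracket slot pairs with which tensor factor, and applying the generalized Leibniz rule in the correct algebra ($A$ versus $A^*$)—is where the genuine care is required; once the translation is set up correctly the cancellations are routine.
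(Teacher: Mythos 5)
Your duality-symmetry reduction is sound, and your overall framework (dualize each condition through the pairing) is essentially the paper's, which proves the four \emph{unconditional} equivalences \eqref{cond-1}$\Leftrightarrow$\eqref{cond-8}, \eqref{cond-2}$\Leftrightarrow$\eqref{cond-7}, \eqref{cond-3}$\Leftrightarrow$\eqref{cond-6}, \eqref{cond-4}$\Leftrightarrow$\eqref{cond-5} by showing both members of each pair reduce to the same structure-constant identity \eqref{eq:sture-con}. But the mechanism you give for closing the two key steps fails. For \eqref{cond-7}: after your substitution via \eqref{cond-3}, the residual identity $\mathfrak{ad}^*_{\xi,\eta}(x_1\cdot x_2)=\mathfrak{ad}^*_{\mu_A(x_1)\xi,\eta}x_2+\mathfrak{ad}^*_{\mu_A(x_2)\xi,\eta}x_1$ dualizes exactly to \eqref{bialg-2}, a \emph{mixed} compatibility between the product of $A$ and the bracket of $A^*$; it is equivalent to \eqref{cond-2}, and it cannot follow from the generalized Leibniz rule \eqref{eq:poisson-alg} in $A^*$, which relates $\circ$ to $[\cdot,\cdot,\cdot]^*$ and says nothing about $\cdot$. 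This is transparent in structure constants: with $e_i\cdot e_j=\sum_l a_{ij}^l e_l$, $e_i^*\circ e_j^*=\sum_l b_{ij}^l e_l^*$, $[e_i,e_j,e_k]=\sum_l c_{ijk}^l e_l$, $[e_i^*,e_j^*,e_k^*]^*=\sum_l d_{ijk}^l e_l^*$, the conditions \eqref{cond-2}, \eqref{cond-3}, \eqref{cond-6}, \eqref{cond-7} are identities purely in $(a,d)$, while the internal Leibniz rule of $A^*$ is an identity in $(b,d)$ --- the wrong family, so no derivation along your lines exists. Worse, if you instead expand $[\mu_A(x_1)\xi,\eta,\zeta]^*$ using \eqref{cond-2}, the residual identity transforms back into \eqref{cond-7} itself, so the chain ``\eqref{cond-7} from \eqref{cond-3} plus residual, residual from internal axioms'' is circular as set up; the non-circular move is to recognize the residual as the dual reformulation of \eqref{cond-2} directly, which is the paper's computation in disguise.

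Your treatment of \eqref{cond-5} is mismatched in the same way: its dualization is $(1\otimes\mathrm{ad}_{x_1,x_2})\Delta(x_3)=(1\otimes\mathrm{ad}_{x_2,x_3})\Delta(x_1)$, i.e.\ the identity \eqref{bialg-4} whose partner is \eqref{cond-4}, whereas \eqref{cond-1} dualizes to \eqref{bialg-1}, a statement about $\Delta([x,y,z])$ (note both lie in the $(b,c)$ family, which may be why the pairing looked plausible, but the identities are genuinely different --- a two-term versus a three-term relation in structure constants), so your proposed term-by-term match of \eqref{cond-5} against dualized \eqref{cond-1} will not close. One further inaccuracy: your parenthetical claim that the underlying matched pairs of commutative associative and of 3-Lie algebras ``are exactly what the coadjoint data encode'' is false --- for coadjoint representations these are nontrivial conditions, equivalent respectively to $(A,\cdot,\Delta)$ being a commutative and cocommutative infinitesimal bialgebra and to $(A,[\cdot,\cdot,\cdot],\delta)$ being a double construction 3-Lie bialgebra (see the proof of Theorem \ref{thm:one-one-co}); they enter Lemma \ref{equival} as standing hypotheses from the definition of a matched pair, not as automatic facts. (Your observation that the representation axioms \eqref{rep-1}--\eqref{rep-2} hold automatically by Proposition \ref{pro:poisson-dual} is correct.)
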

	\begin{proof}
		Firstly, we prove \eqref{cond-1}$\Leftrightarrow$\eqref{cond-8} using the structure constants. Let $\{e_1,e_2,\cdots,e_n\}$ be a basis of the vector space $A$ and $\{e^*_1,e^*_2,\cdots,e^*_n\}$ be the corresponding dual basis. Let
		\begin{eqnarray*}
			&&e_i\cdot e_j=\sum_{l=1}^n{a_{ij}^l e_l}, ~ e^*_i\circ e^*_j=\sum_{l=1}^n{b_{ij}^l e^*_l},~
			[e_i,e_j,e_k]=\sum_{l=1}^n{c_{ijk}^l e_l},~ [e^*_i,e^*_j,e^*_k]^*=\sum_{l=1}^n{d_{ijk}^l e^*_l},
		\end{eqnarray*}
		where $a_{ij}^l,~b_{ij}^l,~c_{ijk}^l,~d_{ijk}^l\in \mathbb{F},~ i,~j,~k,~l=1,2,\cdots,n$. Then by the definitions of $\mathrm{ad}^*$, $\mathfrak{ad}^*$, $\mathcal{L}^*$, $L^*$,  $\delta$, and $\Delta $ we have 
		\begin{eqnarray*}
			\mathrm{ad}^*_{e_i,e_j}e_k^*=-\sum_{l=1}^n{c_{ijl}^k e_l^*}, \quad \mathfrak{ad}^*_{e_i^*,e_j^*}e_k=-\sum_{l=1}^n{d_{ijl}^k e_l},\quad \delta(e_k)=\sum_{i,j,l=1}^n{d_{ijl}^k e_i\otimes e_j\otimes e_l},\\
			\mathcal{L}^*(e_i)e^*_k=-\sum_{l=1}^n{a_{il}^k e_l^*},\quad L^*(e^*_i)e_k=-\sum_{l=1}^n{b_{il}^k e_l},\quad \Delta(e_k)=\sum_{i,j=1}^n{b_{ij}^k e_i\otimes e_j}.
		\end{eqnarray*}
		
		Let $\rho_A=\mathrm{ad}^*, ~\mu_A=-\mathcal{L}^*, ~\rho_B=\mathfrak{ad}^*$ and $\mu_B=-L^*$. For all $e_\alpha^*\in A^*, ~\alpha=1,2,\cdots,n$, by \eqref{cond-1}, we have
		\begin{eqnarray*}
			[-L^*(e^*_\alpha)e_i,e_j,e_k]=-L^*(\mathrm{ad}^*_{e_j,e_k}e^*_\alpha)e_i-L^*(e^*_\alpha)([e_i,e_j,e_k]).
		\end{eqnarray*}
		Then
		\begin{eqnarray*}
			[\sum_{l=1}^n{b_{\alpha l}^i e_l},e_j,e_k]=-L^*(-\sum_{l=1}^n{c_{jkl}^\alpha e_l^*})e_i-L^*(e^*_\alpha)(\sum_{l=1}^n{c_{ijk}^l e_l}).
		\end{eqnarray*}
		Furthermore, for all $m=1,2,\cdots,n$,
		\begin{eqnarray*}
			\sum_{l=1}^n{b_{\alpha l}^i}\sum_{m=1}^n{c_{ljk}^m e_m}=-\sum_{l=1}^n{c_{jkl}^\alpha}\sum_{m=1}^n{b_{lm}^i e_m}+\sum_{l=1}^n{c_{ijk}^l}\sum_{m=1}^n{b_{\alpha m}^l e_m}.
		\end{eqnarray*}		
		Thus
		\begin{equation}\label{eq:sture-con}
			\sum_{l=1}^n{(b_{\alpha l}^ic_{ljk}^m+c_{jkl}^\alpha b_{lm}^i-c_{ijk}^l b_{\alpha m}^l)}=0.
		\end{equation}
		
		For all $e^*_\alpha,~e^*_\beta\in A^*, ~\alpha,~ \beta=1,2,\cdots,n$, by \eqref{cond-8}, we have
		\begin{eqnarray*}
			\mathrm{ad}^*_{e_i,e_j}(e^*_\alpha\circ e^*_\beta)=e^*_\alpha\circ(\mathrm{ad}^*_{e_i,e_j}e^*_\beta)+e^*_\beta\circ(\mathrm{ad}^*_{e_i,e_j}e^*_\alpha).
		\end{eqnarray*}
		Then
		\begin{eqnarray*}
			\mathrm{ad}^*_{e_i,e_j}(\sum_{l=1}^n{b_{\alpha\beta}^l e^*_l})=e^*_\alpha\circ(-\sum_{l=1}^n{c_{ijl}^\beta e_l^*})+e^*_\beta\circ(-\sum_{l=1}^n{c_{ijl}^\alpha e_l^*}).
		\end{eqnarray*}
		Furthermore
		\begin{eqnarray*}
			-\sum_{l=1}^n{b_{\alpha\beta}^l\sum_{m=1}^n{c_{ijm}^l e^*_m}}=-\sum_{l=1}^n{c_{ijl}^\beta\sum_{m=1}^n{b_{\alpha l}^m e^*_m} }-\sum_{l=1}^n{c_{ijl}^\alpha \sum_{m=1}^n{b_{\beta l}^m e^*_m}},
		\end{eqnarray*}
		which also gives \eqref{eq:sture-con}.	
		Thus, we deduce that \eqref{cond-1} is equivalent to \eqref{cond-8}.
		
		Secondly, using the same method as above, we can prove that \eqref{cond-2}$\Leftrightarrow$\eqref{cond-7}, \eqref{cond-3}$\Leftrightarrow$\eqref{cond-6}, and \eqref{cond-4}$\Leftrightarrow$\eqref{cond-5}. The conclusion holds.
	\end{proof}
	\begin{defi} 
		(\cite{Bai2019}) Let $(A,[\cdot,\cdot,\cdot])$ be a 3-Lie algebra and $\delta:A\rightarrow A\otimes A\otimes A$ be a linear map. If $(A,\delta)$ is a 3-Lie coalgebra and for all $x,y,z\in A$,
		\begin{align*}
			\delta([x,y,z])&=(1\otimes 1 \otimes \mathrm{ad}_{y,z})\delta(x) + (1\otimes 1 \otimes \mathrm{ad}_{z,x})\delta(y)+(1\otimes 1 \otimes \mathrm{ad}_{x,y})\delta(z),\\
			\delta([x,y,z]) &= (1\otimes 1 \otimes \mathrm{ad}_{y,z})\delta(x) + (1\otimes \mathrm{ad}_{y,z} \otimes 1)\delta(x)+(\mathrm{ad}_{y,z} \otimes 1\otimes 1 )\delta(x),
		\end{align*}
		then we call the triple $(A,[\cdot,\cdot,\cdot],\delta)$  a \textbf{double construction 3-Lie bialgebra}.
	\end{defi}
	\begin{defi} \label{defi:D-3-bi}
		Let $(A,\cdot, [\cdot,\cdot,\cdot])$ be a Poisson 3-Lie algebra and $(A,\Delta,\delta)$ be a Poisson 3-Lie coalgebra. If $(A,[\cdot,\cdot,\cdot],\delta)$ is a double construction 3-Lie bialgebra, $(A,\cdot,\Delta)$ is a commutative and cocommutative infinitesimal bialgebra satisfying:
		\begin{eqnarray}
			&&\Delta([x,y,z])=(1\otimes \mathrm{ad}_{y,z})\Delta(x)+(\mathrm{ad}_{y,z}\otimes 1 )\Delta(x), \label{bialg-1}\\
			&&\delta(x\cdot y)=\big(\mathcal{L}(y)\otimes1\otimes1\big)\delta(x)+\big(\mathcal{L}(x)\otimes1\otimes1\big)\delta(y), \label{bialg-2}\\
			&&\big(\mathcal{L}(x)\otimes1\otimes1\big)\delta(y)=\big(1\otimes1\otimes\mathcal{L}(x)\big)\delta(y), \label{bialg-3}\\
			&&(1\otimes\mathrm{ad}_{x,y})\Delta(z)=(1\otimes\mathrm{ad}_{y,z})\Delta(x), \quad\forall~ x,y,z\in A, \label{bialg-4}
		\end{eqnarray}
		then we call $(A, \cdot,[\cdot,\cdot,\cdot],\Delta,\delta)$ a \textbf{double construction Poisson 3-Lie bialgebra}.
	\end{defi}
	
	\begin{thm}\label{thm:one-one-co}
		Let $(A,\cdot,[\cdot,\cdot,\cdot])$ be a Poisson 3-Lie algebra and  $(A,\Delta,\delta)$ be a Poisson 3-Lie coalgebra. Then the following statements are equivalent: 
		
		$($$\mathrm{a}$$)$ $(A,\cdot, [\cdot,\cdot,\cdot],\Delta,\delta)$ is a double construction Poisson 3-Lie bialgebra;
		
		$($$\mathrm{b}$$)$ $(A,A^*,\mathrm{ad}^*,-\mathcal{L}^*,\mathfrak{ad}^*,-L^*)$ is a matched pair of Poisson 3-Lie algebras;
		
		$($$\mathrm{c}$$)$ $\big((A\oplus A^*,\mathcal{B}), A, A^*\big)$ is a standard Manin triple of Poisson 3-Lie algebras, where  $\mathcal{B}$,  $[\cdot,\cdot,\cdot]_{A\oplus A^*}$ and $\cdot_{A\oplus A^*}$ are given by \eqref{bilinear}, \eqref{Manin-1}, and \eqref{Manin-2}, respectively.
	\end{thm}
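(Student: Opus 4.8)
The plan is to establish (a) $\Leftrightarrow$ (b) directly and to obtain (b) $\Leftrightarrow$ (c) for free from Theorem \ref{Manin triple}, which already identifies the standard Manin triple $\big((A\oplus A^*,\mathcal{B}),A,A^*\big)$ with the matched pair $(A,A^*,\mathrm{ad}^*,-\mathcal{L}^*,\mathfrak{ad}^*,-L^*)$. Thus the whole content of the theorem lies in the first equivalence, and the natural strategy is to decompose the defining data of a matched pair of Poisson 3-Lie algebras into three pieces --- its commutative associative part $(A,A^*,-\mathcal{L}^*,-L^*)$, its 3-Lie part $(A,A^*,\mathrm{ad}^*,\mathfrak{ad}^*)$, and the mixed compatibility conditions \eqref{cond-1}--\eqref{cond-8} --- and to match each piece against the corresponding ingredient of the double construction Poisson 3-Lie bialgebra.

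First I would dispose of the structural prerequisites. Since $(A,\cdot,[\cdot,\cdot,\cdot])$ is a Poisson 3-Lie algebra, Proposition \ref{pro:poisson-dual} shows that the coadjoint representation $(A^*,\mathrm{ad}^*,-\mathcal{L}^*)$ is automatically a representation of $A$; and since $(A,\Delta,\delta)$ is a Poisson 3-Lie coalgebra, Proposition \ref{pro:poisson-3-lie-coal} guarantees that $(A^*,\circ,[\cdot,\cdot,\cdot]^*)$ is a Poisson 3-Lie algebra, whence $(A,\mathfrak{ad}^*,-L^*)$ is likewise a representation of $A^*$ by Proposition \ref{pro:poisson-dual}. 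Hence the representation requirements in the definition of a matched pair hold unconditionally, and the two ``pure'' matched-pair conditions reduce to known dualities: the pair $(A,A^*,-\mathcal{L}^*,-L^*)$ is a matched pair of commutative associative algebras if and only if $(A,\cdot,\Delta)$ is a commutative and cocommutative infinitesimal bialgebra \cite{Bai2010}, while $(A,A^*,\mathrm{ad}^*,\mathfrak{ad}^*)$ is a matched pair of 3-Lie algebras if and only if $(A,[\cdot,\cdot,\cdot],\delta)$ is a double construction 3-Lie bialgebra \cite{Bai2019}.

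It then remains to translate the mixed conditions. Here I would invoke Lemma \ref{equival} to collapse \eqref{cond-1}--\eqref{cond-8} to the four conditions \eqref{cond-1}--\eqref{cond-4}, and then dualize each one against arbitrary test elements, unfolding the definitions of $\mathrm{ad}^*$, $\mathfrak{ad}^*$, $\mathcal{L}^*$, $L^*$ together with the pairings $\langle\Delta(x),\xi\otimes\eta\rangle=\langle x,\xi\circ\eta\rangle$ and $\langle\delta(x),\xi\otimes\eta\otimes\zeta\rangle=\langle x,[\xi,\eta,\zeta]^*\rangle$. I expect the correspondence to come out diagonally, namely \eqref{cond-1} $\Leftrightarrow$ \eqref{bialg-1}, \eqref{cond-2} $\Leftrightarrow$ \eqref{bialg-2}, \eqref{cond-3} $\Leftrightarrow$ \eqref{bialg-3}, and \eqref{cond-4} $\Leftrightarrow$ \eqref{bialg-4}: in each case the operators occurring on the two sides of the matched-pair identity are precisely the linear duals of the multiplications and brackets appearing in the companion bialgebra relation (for instance \eqref{cond-1} couples $L^*$ and $\mathrm{ad}^*$, which dualize to $\Delta$ and $\mathrm{ad}$ as in \eqref{bialg-1}). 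Assembling the three pieces yields (a) $\Leftrightarrow$ (b), and Theorem \ref{Manin triple} then closes the cycle to (c).

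The main obstacle I anticipate is computational rather than conceptual: the dualization of the mixed conditions demands careful tracking of the placement of tensor factors and of the minus signs introduced by the coadjoint actions $-\mathcal{L}^*$ and $-L^*$. The cleanest way to keep this under control is to repeat the structure-constant bookkeeping used in the proof of Lemma \ref{equival}, writing $e_i\cdot e_j$, $e_i^*\circ e_j^*$, $[e_i,e_j,e_k]$, and $[e_i^*,e_j^*,e_k^*]^*$ in a fixed basis and its dual, and verifying that each matched-pair condition and its partner bialgebra condition collapse to one and the same identity among the constants $a_{ij}^l$, $b_{ij}^l$, $c_{ijk}^l$, $d_{ijk}^l$.
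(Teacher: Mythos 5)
Your proposal is correct and follows essentially the same route as the paper's own proof: (b) $\Leftrightarrow$ (c) via Theorem \ref{Manin triple}, reduction of the pure associative and 3-Lie parts to the known dualities of \cite{Bai2010} and \cite{Bai2019}, collapse of the mixed conditions to \eqref{cond-1}--\eqref{cond-4} via Lemma \ref{equival}, and the diagonal equivalences with \eqref{bialg-1}--\eqref{bialg-4} verified by the same structure-constant bookkeeping used in that lemma. The only difference is that you spell out the representation-theoretic prerequisites (Propositions \ref{pro:poisson-dual} and \ref{pro:poisson-3-lie-coal}) that the paper leaves implicit, which is harmless.
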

	\begin{proof}
		By Theorem \ref{Manin triple}, we have (b) is equivalent to (c). 
		By \cite[Theorem 4.14]{Bai2019}, we have $(A,[\cdot,\cdot,\cdot],\delta)$ is a double construction 3-Lie bialgebra if and only if $(A, A^*, \mathrm{ad}^*,\mathfrak{ad}^*)$ is a matched pair of 3-Lie algebras.  By \cite[Corollary 2.2.6]{Bai2010}, we have $(A,\cdot,\Delta)$ is a commutative and cocommutative infinitesimal bialgebra if and only if $(A,A^*,-\mathcal{L}^*,-L^*)$ is a matched pair of commutative associative algebras.
		Then, by Lemma \ref{equival},  to prove that (a) is equivalent to (b), we only need to prove that 
		$$\eqref{cond-1} \Leftrightarrow \eqref{bialg-1}, \quad\eqref{cond-2} \Leftrightarrow \eqref{bialg-2}, \quad\eqref{cond-3} \Leftrightarrow \eqref{bialg-3},  \quad\eqref{cond-4} \Leftrightarrow \eqref{bialg-4}.$$ 
		
		Let $\{e_1,e_2,\cdots,e_n\}$ be a basis of the vector space $A$. By \eqref{bialg-1}, for all $i,~j,~k=1,2,\cdots,n$, we have
		$$(1\otimes \mathrm{ad}_{e_j,e_k})\Delta(e_i)=(\mathrm{ad}_{e_k,e_j}\otimes 1 )\Delta(e_i)+\Delta([e_i,e_j,e_k]).$$
		Then using the structure constants in the proof of Lemma \ref{equival}, we have 
		$$(1\otimes \mathrm{ad}_{e_j,e_k})\sum_{\alpha,l=1}^n({b_{\alpha l}^i e_\alpha\otimes e_l})=(\mathrm{ad}_{e_k,e_j}\otimes 1)\sum_{m,l=1}^n({b_{l m}^i e_l\otimes e_m})+\Delta\big(\sum_{l=1}^n({c_{ijk}^l e_l})\big).$$
		Furthermore, for all $\alpha,~m,~l=1,2,\cdots,n$,
		$$\sum_{\alpha,l,m=1}^n({b_{\alpha l}^i c_{jkl}^m} e_\alpha\otimes e_m)=\sum_{\alpha,l,m=1}^n({b_{l m}^i c_{kjl}^\alpha} e_\alpha\otimes e_m)+\sum_{\alpha,m,l=1}^n({c_{ijk}^l b_{\alpha m}^l e_\alpha\otimes e_m}).$$	
		which  gives \eqref{eq:sture-con}. By the proof of Lemma \ref{equival},  we have \eqref{cond-1} $\Leftrightarrow$ \eqref{eq:sture-con}. Thus, we deduce that \eqref{cond-1} $\Leftrightarrow$ \eqref{bialg-1}.
		
		Similarly, using the same method as above, we can prove that  \eqref{cond-2} $\Leftrightarrow$ \eqref{bialg-2}, \eqref{cond-3} $\Leftrightarrow$ \eqref{bialg-3}, and \eqref{cond-4} $\Leftrightarrow$ \eqref{bialg-4}. Then (a) is equivalent to (b). The proof is completed.
	\end{proof}
	\section{Representations and matched pairs of transposed Poisson 3-Lie algebras}\label{sec:trans-Poisson}
	In this section, we introduce the notions of representation and matched pair for transposed Poisson 3-Lie algebras. As noted in \cite{G. Liu}, the method using Manin triples with invariant bilinear forms on both commutative associative algebras and Lie algebras does not apply to developing a bialgebra theory for transposed Poisson algebras. Furthermore, we prove that there is no such ``natural'' bialgebra theory for transposed Poisson 3-Lie algebras, as the mixed products of $\cdot$ and $[\cdot,\cdot,\cdot]$ are trivial (Proposition \ref{pro:no-bi-form}).

	\subsection{Representations of transposed Poisson 3-Lie algebras}\label{subsec:51}
	Recall \cite{G. Liu}, a representation of transposed Poisson algebra $(A, \cdot, [\cdot,\cdot])$ is a triple $(V, \rho, \mu)$, where $(V, \rho)$ and $(V, \mu)$ are respectively representations of Lie algebra $(A, [\cdot,\cdot])$ and commutative associative algebra $(A,\cdot)$ that satisfy
	\begin{align*}
		2\mu(x)\rho(y)&=\rho(x\cdot y)+\rho(y)\mu(x),\\
		2\mu([x,y])&=\rho(x)\mu(y)-\rho(y)\mu(x), \quad\forall~ x, y\in A.
	\end{align*}
	Replace the above representation of Lie algebra with the representation of 3-Lie algebra. By \eqref{eq:trans-poisson-alg}, we give the representation of the transposed Poisson 3-Lie algebra.
	\begin{defi}
		A \textbf{representation of  transposed Poisson 3-Lie algebra} $(B,\cdot, [\cdot,\cdot,\cdot])$ is a triple $(V,\rho,\mu)$, where $(V,\mu)$ is a representation of the commutative associative algebra $(B,\cdot)$, $(V,\rho)$ is a representation of the 3-Lie algebra $(B, [\cdot,\cdot,\cdot])$ such that
		\begin{align}
			3\mu(x)\rho(y,z)&=\rho(x\cdot y,z)+\rho(y,x\cdot z)+\rho(y,z)\mu(x),\label{trans:rep-1}\\
			3\mu([x,y,z])&=\rho(y,z)\mu(x)-\rho(x,z)\mu(y)+\rho(x,y)\mu(z),\quad \forall ~x,y,z\in B.\label{trans:rep-2}
		\end{align}	
	\end{defi}	
	Let $(B,\cdot, [\cdot,\cdot,\cdot])$ be a transposed Poisson 3-Lie algebra, $(B,\mathrm{ad})$ and $(B,\mathcal{L})$ be the adjoint representations of $(B, [\cdot,\cdot,\cdot])$ and $(B,\cdot)$, respectively. Then $(B,\mathrm{ad},\mathcal{L})$ is a representation of $(B,\cdot, [\cdot,\cdot,\cdot])$, called the \textbf{adjoint representation of transposed Poisson 3-Lie algebras}.	
	\begin{pro}\label{pro:trans-poisson-semi-direct}
		Let $(B,\cdot, [\cdot,\cdot,\cdot])$ be a transposed Poisson 3-Lie algebra and $V$ be a vector space. Let $\mu:B\rightarrow \mathfrak{gl}(V)$ and $\rho:\otimes^2 B\rightarrow\mathfrak{gl}(V)$ be linear maps. Then $(V,\rho,\mu)$ is a representation of $(B,\cdot, [\cdot,\cdot,\cdot])$ if and only if $(B\oplus V, \cdot_\mu, [\cdot,\cdot,\cdot]_\rho)$ is a  transposed Poisson 3-Lie algebra, where the product $\cdot_{\mu}$ and the bracket $[\cdot,\cdot,\cdot]_{\rho}$ are given by
		\begin{align}
			(x_1+u_1)\cdot_{\mu}(x_2+u_2)&=x_1\cdot x_2+ \mu(x_1)u_2+\mu(x_2)u_1,\label{eq:trans-poisson-semi-1}\\
			[x_1+u_1,x_2+u_2,x_3+u_3]_{\rho}&=[x_1,x_2,x_3]+\rho(x_1,x_2)u_3-\rho(x_1,x_3)u_2+\rho(x_2,x_3)u_1,\label{eq:trans-poisson-semi-2}
		\end{align}
		for all $x_1,x_2,x_3\in B, u_1,u_2,u_3\in V$. We denote it by $B\ltimes_{\mu,\rho}V$, which is called \textbf{semi-direct product transposed Poisson 3-Lie algebras}. 
	\end{pro}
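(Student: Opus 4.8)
The plan is to follow the proof of Proposition~\ref{pro:possion-semi-direct} almost verbatim, since the operations \eqref{eq:trans-poisson-semi-1} and \eqref{eq:trans-poisson-semi-2} are identical to \eqref{tp-semi-1} and \eqref{tp-semi-2}; only the compatibility axiom to be verified changes, from the generalized Leibniz rule \eqref{eq:poisson-alg} to the transposed Leibniz rule \eqref{eq:trans-poisson-alg}. First I would invoke the standard semi-direct product results for commutative associative algebras and for 3-Lie algebras \cite{R. Schafer, Bai2019}: $(V,\mu)$ is a representation of $(B,\cdot)$ if and only if $(B\oplus V,\cdot_\mu)$ is a commutative associative algebra, and $(V,\rho)$ is a representation of $(B,[\cdot,\cdot,\cdot])$ if and only if $(B\oplus V,[\cdot,\cdot,\cdot]_\rho)$ is a 3-Lie algebra. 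This reduces the proposition to a single equivalence: that the transposed Leibniz rule \eqref{eq:trans-poisson-alg} holds on $B\oplus V$ if and only if both \eqref{trans:rep-1} and \eqref{trans:rep-2} hold.

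To establish that equivalence, I would fix $x_1,x_2,x_3,x_4\in B$ and $u_1,u_2,u_3,u_4\in V$ and expand
$$3(x_4+u_4)\cdot_\mu[x_1+u_1,x_2+u_2,x_3+u_3]_\rho = \sum_{i=1}^{3}\big[\,\cdots,(x_4+u_4)\cdot_\mu(x_i+u_i),\cdots\,\big]_\rho$$
using \eqref{eq:trans-poisson-semi-1} and \eqref{eq:trans-poisson-semi-2}. The $B$-component of this identity is exactly the transposed Leibniz rule inside $B$, which holds automatically because $(B,\cdot,[\cdot,\cdot,\cdot])$ is a transposed Poisson 3-Lie algebra; hence it imposes no condition on $\mu$ or $\rho$, and all the content lives in the $V$-component. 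That component is multilinear in $u_1,u_2,u_3,u_4$, so I would compare the coefficients of each $u_i$ separately.

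The key step is the coefficient comparison. Collecting the terms in $u_3$ yields $3\mu(x_4)\rho(x_1,x_2)=\rho(x_4\cdot x_1,x_2)+\rho(x_1,x_4\cdot x_2)+\rho(x_1,x_2)\mu(x_4)$, which is precisely \eqref{trans:rep-1}; by skew-symmetry of the bracket the coefficients of $u_1$ and $u_2$ reproduce the same identity \eqref{trans:rep-1} after relabelling the arguments. The coefficient of $u_4$, on the other hand, reduces to $3\mu([x_1,x_2,x_3])=\rho(x_2,x_3)\mu(x_1)-\rho(x_1,x_3)\mu(x_2)+\rho(x_1,x_2)\mu(x_3)$, which is exactly \eqref{trans:rep-2}. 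Since each of these reductions is an equivalence, the transposed Leibniz rule on $B\oplus V$ is equivalent to the conjunction of \eqref{trans:rep-1} and \eqref{trans:rep-2}, proving both directions simultaneously.

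I do not anticipate a genuine obstacle; the only real work is careful sign- and argument-bookkeeping of the six mixed products of $\mu$ and $\rho$, together with checking that the three coefficients of $u_1,u_2,u_3$ collapse to the single relation \eqref{trans:rep-1} rather than to three a priori distinct conditions. The one point worth emphasizing is that the $u_4$-coefficient does \emph{not} reduce to \eqref{trans:rep-1} but instead produces the second axiom \eqref{trans:rep-2}, which is exactly why both representation conditions are simultaneously necessary and sufficient.
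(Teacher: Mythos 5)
Your proposal is correct and takes essentially the same route as the paper: reduce to the standard semi-direct product results for commutative associative algebras and for 3-Lie algebras, then expand the transposed Leibniz rule on $B\oplus V$ via \eqref{eq:trans-poisson-semi-1} and \eqref{eq:trans-poisson-semi-2} and match the outcome against \eqref{trans:rep-1} and \eqref{trans:rep-2}. Your coefficient bookkeeping is accurate --- the coefficients of $u_1,u_2,u_3$ all collapse to \eqref{trans:rep-1} after relabelling, while the $u_4$-coefficient yields \eqref{trans:rep-2} --- and organizing the computation this way has the minor merit of making the converse direction explicit, where the paper verifies one direction and asserts the other holds ``in a similar manner.''
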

	\begin{proof}
		By the proof of Proposition \ref{pro:possion-semi-direct}, we only need to prove that $(V,\rho,\mu)$ is a representation of $(B,\cdot, [\cdot,\cdot,\cdot])$ if and only if the product $\cdot_{\mu}$ and the bracket $[\cdot,\cdot,\cdot]_{\rho}$ satisfy \eqref{eq:trans-poisson-alg}.   If $(V,\rho,\mu)$ is a representation of $(B,\cdot, [\cdot,\cdot,\cdot])$, then for all $x_i \in B $, $u_i \in V,1\leq i\leq4$, we have
		\begin{eqnarray*}
			&&3(x_1+u_1)\cdot_\mu[x_2+u_2,x_3+u_3,x_4+u_4]_\rho \\
			&\overset{\eqref{eq:trans-poisson-semi-2}}{=}&3(x_1+u_1)\cdot_\mu([x_2,x_3,x_4]+\rho(x_2,x_3)u_4-\rho(x_2,x_4)u_3+\rho(x_3,x_4)u_2)\\
			&\overset{\eqref{eq:trans-poisson-semi-1}}{=}&3x_1\cdot[x_2,x_3,x_4]+3\mu(x_1)(\rho(x_2,x_3)u_4-\rho(x_2,x_4)u_3+\rho(x_3,x_4)u_2)+3\mu([x_2,x_3,x_4])u_1.\\
			&=&[x_1\cdot x_2,x_3,x_4]+[x_2,x_1\cdot x_3,x_4]+[x_2,x_3,x_1\cdot x_4]\\
			&\overset{\eqref{trans:rep-1},\eqref{trans:rep-2}}{=}&[x_1\cdot x_2,x_3,x_4]+\rho(x_1\cdot x_2,x_3)u_4-\rho(x_1\cdot x_2,x_4)u_3+\rho( x_3,x_4)(\mu(x_1)u_2+\mu(x_2)u_1)\\
			&&+[x_2,x_1\cdot x_3,x_4]+\rho(x_2,x_1\cdot x_3)u_4-\rho( x_2,x_4)\mu(x_1)(u_3+\mu(x_3)u_1)+\rho(x_1\cdot x_3,x_4)u_2\\
			&&+[x_2,x_3,x_1\cdot x_4]+\rho( x_2,x_3)(\mu(x_1)u_4+\mu(x_4)u_1)-\rho(x_2,x_1\cdot x_4)u_3+\rho(x_3,x_1\cdot x_4)u_2\\
			&\overset{\eqref{tp-semi-2}}{=}&[x_1\cdot x_2+\mu(x_1)u_2+\mu(x_2)u_1,x_3+u_3,x_4+u_4]_\rho \\
			&&+[x_2+u_2,x_1\cdot x_3+\mu(x_1)u_3+\mu(x_3)u_1,x_4+u_4]_\rho\\
			&&+[x_2+u_2,x_3+u_3,x_1\cdot x_4+\mu(x_1)u_4+\mu(x_4)u_1]_\rho\\
			&\overset{\eqref{tp-semi-1}}{=}&[(x_1+u_1)\cdot_\mu (x_2+u_2),x_3+u_3,x_4+u_4]_\rho+[x_2+u_2,(x_1+u_1)\cdot_\mu(x_3+u_3),x_4+u_4]_\rho\\
			&&+[x_2+u_2,x_3+u_3,(x_1+u_1)\cdot_\mu(x_4+u_4)]_\rho.
		\end{eqnarray*}
		
		Therefore $(B\oplus V, \cdot_\mu, [\cdot,\cdot,\cdot]_\rho)$ is a transposed Poisson 3-Lie algebra.  Conversely, in a similar manner, if $(B\oplus V, \cdot_\mu, [\cdot,\cdot,\cdot]_\rho)$ is a transposed Poisson 3-Lie algebra, then $(V,\rho,\mu)$ is a representation of Poisson 3-Lie algebras. The conclusion holds.
	\end{proof}
	\begin{pro}\label{pro:trans-poisson-dual-rep}
		Let $(B,\cdot, [\cdot,\cdot,\cdot])$ be a transposed Poisson 3-Lie algebra and $(V,\rho,\mu)$ be a representation of $(B,\cdot, [\cdot,\cdot,\cdot])$. Then $(V,\rho^*,-\mu^*)$ is a representation of $(B,\cdot, [\cdot,\cdot,\cdot])$ if and only if 
		\begin{equation}\label{dual-rep-prop}
			\mu(x)\rho(y,z)=\rho(y,z)\mu(x), \quad \mu([x,y,z])=0, \quad \forall~ x,y,z \in A.
		\end{equation}
		This representation of $(B,\cdot, [\cdot,\cdot,\cdot])$ is called the dual representation of $(V,\rho,\mu)$ .
		In particular,  $(B,ad^*,-\mathcal{L}^*)$ is a representation of $(B,\cdot, [\cdot,\cdot,\cdot])$ if and only if the following equation holds:
		\begin{equation}\label{eq:tran-poisson-dual}
			x\cdot [y,z,w]=[y,z,x\cdot w]=0, \quad \forall~ x,y,z,w \in B.
		\end{equation}	
	\end{pro}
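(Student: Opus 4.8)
The plan is to reduce the claim to the two compatibility axioms \eqref{trans:rep-1} and \eqref{trans:rep-2}. By Proposition \ref{pro:3-lie-dual-rep} the pair $(V^*,\rho^*)$ is already a representation of the 3-Lie algebra $(B,[\cdot,\cdot,\cdot])$, and by Proposition \ref{pro:com-asso-dual-rep} the pair $(V^*,-\mu^*)$ is already a representation of the commutative associative algebra $(B,\cdot)$. Hence $(V^*,\rho^*,-\mu^*)$ is a representation of the transposed Poisson 3-Lie algebra precisely when $\rho^*$ together with $-\mu^*$ satisfy \eqref{trans:rep-1} and \eqref{trans:rep-2}; this is the only thing that needs to be checked.

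First I would dualize \eqref{trans:rep-1}. Writing each operator on $V^*$ as (minus) the transpose of the corresponding operator on $V$ via \eqref{3-lie-dual} and \eqref{com-asso-dual}, pairing with arbitrary $v\in V$ and $\xi\in V^*$ and cancelling the transpose, the instance of \eqref{trans:rep-1} for the pair $(\rho^*,-\mu^*)$ becomes the operator identity
\[ 3\rho(y,z)\mu(x)=\rho(x\cdot y,z)+\rho(y,x\cdot z)+\mu(x)\rho(y,z) \]
on $V$. Subtracting the original \eqref{trans:rep-1}, the $\rho(x\cdot y,z)$-type terms cancel and one is left with $3\big(\rho(y,z)\mu(x)-\mu(x)\rho(y,z)\big)=\mu(x)\rho(y,z)-\rho(y,z)\mu(x)$, that is, $4\big(\rho(y,z)\mu(x)-\mu(x)\rho(y,z)\big)=0$. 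Since $\mathbb{F}$ has characteristic zero this forces $\mu(x)\rho(y,z)=\rho(y,z)\mu(x)$, the first identity of \eqref{dual-rep-prop}; conversely, under this commutativity the dualized equation coincides term by term with the original \eqref{trans:rep-1} and hence holds automatically.

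Next I would treat \eqref{trans:rep-2} the same way; its dual version reads
\[ 3\mu([x,y,z])=-\mu(x)\rho(y,z)+\mu(y)\rho(x,z)-\mu(z)\rho(x,y). \]
Adding this to the original \eqref{trans:rep-2} repackages the right-hand side into the three commutators $[\rho(y,z),\mu(x)]-[\rho(x,z),\mu(y)]+[\rho(x,y),\mu(z)]$ while the left-hand side becomes $6\mu([x,y,z])$. Under the commutativity obtained in the previous step these commutators vanish, giving $\mu([x,y,z])=0$, the second identity of \eqref{dual-rep-prop}; conversely, commutativity together with $\mu([x,y,z])=0$ makes the dual version of \eqref{trans:rep-2} an immediate consequence of the original. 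Combining the two steps yields the stated equivalence. For the ``in particular'' clause I would specialize to the adjoint representation, so that $\rho=\mathrm{ad}$, $\mu=\mathcal{L}$, and the dual is $(B^*,\mathrm{ad}^*,-\mathcal{L}^*)$; evaluating \eqref{dual-rep-prop} on an arbitrary $w$ turns the commutativity into $x\cdot[y,z,w]=[y,z,x\cdot w]$ and turns $\mathcal{L}([x,y,z])=0$ into $[x,y,z]\cdot w=0$, which by commutativity of $\cdot$ is $w\cdot[x,y,z]=0$; feeding this back into the first identity forces $[y,z,x\cdot w]=0$ as well, so the two conditions collapse exactly to \eqref{eq:tran-poisson-dual}, the converse being immediate.

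The main obstacle is purely bookkeeping: getting the signs and the left/right operator orderings correct when passing to transposes, so that the telescoping cancellation in \eqref{trans:rep-1} and the commutator repackaging in \eqref{trans:rep-2} genuinely appear. Once the two dualized identities are written correctly, the factor-$4$ and factor-$6$ cancellations, which rely on $\mathbb{F}$ having characteristic zero, do all of the work.
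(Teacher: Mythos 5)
Your proposal is correct and takes essentially the same route as the paper: dualizing \eqref{trans:rep-1} and \eqref{trans:rep-2} through the pairings \eqref{3-lie-dual} and \eqref{com-asso-dual} yields exactly the paper's operator identities \eqref{dual-rep-1} and \eqref{dual-rep-2}, which are then compared with the original axioms to get \eqref{dual-rep-prop}. Your explicit subtraction and addition steps (the factor-$4$ and factor-$6$ cancellations) merely spell out the equivalence the paper asserts more tersely, and your specialization $\rho=\mathrm{ad}$, $\mu=\mathcal{L}$ for \eqref{eq:tran-poisson-dual} matches the paper's conclusion.
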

	\begin{proof}
		For all $x,y,z\in B, \xi\in V^*, v\in V$, we have
		\begin{eqnarray*}
			&&\langle\big(-3\mu^*(x)\rho^*(y,z)-\rho^*(x\cdot y,z)-\rho^*(y,x\cdot z)+\rho^*(y,z)\mu^*(x)\big)\xi,v \rangle \nonumber \\ 
			&\overset{\eqref{com-asso-dual}\eqref{3-lie-dual}}{=}&\langle \xi, \big(-3\rho(y,z)\mu(x)+\rho(x\cdot y,z)+\rho(y,x\cdot z)+\mu(x)\rho(y,z)\big)v \rangle, \\
			&&\langle\big(-3\mu^*([x,y,z])+\rho^*(y,z)\mu^*(x)-\rho^*(x, z)\mu^*(y)+\rho^*(x,y)\mu^*(z)\big)\xi,v \rangle \nonumber \\
			&\overset{\eqref{com-asso-dual}\eqref{3-lie-dual}}{=}&\langle\xi,\big(3\mu([x,y,z])+\mu(x)\rho(y,z)-\mu(y)\rho(x, z)+\mu(z)\rho(x,y)\big)v \rangle.	
		\end{eqnarray*}
		Thus $(V,\rho^*,-\mu^*)$ is a representation if and only if the following equations hold:
		\begin{equation}\label{dual-rep-1}
			-3\rho(y,z)\mu(x)+\rho(x\cdot y,z)+\rho(y,x\cdot z)+\mu(x)\rho(y,z)=0,
		\end{equation}
		\begin{equation}\label{dual-rep-2}
			3\mu([x,y,z])+\mu(x)\rho(y,z)-\mu(y)\rho(x, z)+\mu(z)\rho(x,y)=0.
		\end{equation}
		By \eqref{trans:rep-1} and \eqref{trans:rep-2}, we have \eqref{dual-rep-1} and \eqref{dual-rep-2} hold if and only if \eqref{dual-rep-prop} holds. In particular, taking $\rho=\mathrm{ad}$ and $\mu=\mathcal{L}$, then $(B^*,\mathrm{ad}^*,-\mathcal{L}^*)$ is a representation of $(B,\cdot, [\cdot,\cdot,\cdot])$ if and only if \eqref{eq:tran-poisson-dual} holds.
	\end{proof}	
	\begin{rmk}
		If $(V,\rho,\mu)$ is a representation of a Poisson 3-Lie algebra $(A,\cdot, [\cdot,\cdot,\cdot])$, then $(V,\rho^*,-\mu^*)$ naturally be a representation of $(A,\cdot, [\cdot,\cdot,\cdot])$. However, there is no “natural” dual representation for transposed Poisson 3-Lie algebra. 
	\end{rmk}
	\subsection{Matched pairs of transposed Poisson 3-Lie algebras}\label{subsec:52}
	\begin{defi} \label{def:trans-poisson-matched-pair}
		Let $(A,\cdot_A,[\cdot,\cdot,\cdot]_A)$ and $(B,\cdot_B,[\cdot,\cdot,\cdot]_B)$ be two transposed Poisson 3-Lie algebras. Let $\mu_A:A\rightarrow \mathfrak{gl}(B)$, $\rho_A:\otimes^2 A\rightarrow \mathfrak{gl}(B)$ and $\mu_B:B\rightarrow \mathfrak{gl}(A)$, $\rho_B:\otimes^2 B\rightarrow \mathfrak{gl}(A)$ be linear maps such that $(A,B,\mu_A,\mu_B)$ is a matched pair of commutative associative algebra and $(A,B,\rho_A,\rho_B)$ is a matched pair of 3-Lie algebra. If $(B,\rho_A,\mu_A)$ is a representation of $(A,\cdot_A,[\cdot,\cdot,\cdot]_A)$ and $(A,\rho_B,\mu_B)$ is a representation of $(B,\cdot_B,[\cdot,\cdot,\cdot]_B)$ and for all $x_i\in A, y_i\in B, 1\leq i\leq 4$, the following equations hold:
		{\footnotesize \begin{eqnarray}
				&&3\mu_B(y_4)[x_1,x_2,x_3]_A=[\mu_B(y_4)x_1,x_2,x_3]_A+[x_1,\mu_B(y_4)x_2,x_3]_A
				+[x_1,x_2,\mu_B(y_4)x_3]_A,\label{match-cond-1}\\ 
				&&3\mu_A(x_4)[y_1,y_2,y_3]_B=[\mu_A(x_4)y_1,y_2,y_3]_B+[y_1,\mu_A(x_4)y_2,y_3]_B+[y_1,y_2,\mu_A(x_4)y_3]_B,\label{match-cond-2}\\ 
				&&[\mu_B(y_1)x_4,x_2,x_3]_A=3\mu_B(\rho_A(x_2,x_3)y_1)x_4, \label{match-cond-3}\\ 
				&&[\mu_A(x_1)y_4,y_2,y_3]_B=3\mu_A(\rho_B(y_2,y_3)x_1)y_4, \label{match-cond-4}\\ 
				&&\rho_B(y_1,y_2)(x_4\cdot_A x_3)+\rho_B(\mu_A(x_4)y_1,y_2)x_3+\rho_B(y_1,\mu_A(x_4)y_2)x_3+\rho_A(\mu_B(y_2)x_4,x_3)y_1\nonumber \\
				&&+\rho_A(x_3, \mu_B(y_1)x_4)y_2=0,\label{match-cond-5}\\ 
				&&\rho_A(x_1,x_2)(y_4\cdot_B y_3)+\rho_A(\mu_B(y_4)x_1,x_2)y_3+\rho_A(x_1,\mu_B(y_4)x_2)y_3+\rho_B(\mu_A(x_2)y_4,y_3)x_1\nonumber \\
				&&+\rho_B(y_3,\mu_A(x_1)y_4)x_2=0,	\label{match-cond-6}
		\end{eqnarray}}
		then $(A,B,\rho_A,\mu_A,\rho_B,\mu_B)$ is called a \textbf{matched pair of transposed Poisson 3-Lie algebras}.
	\end{defi}
	\begin{thm}
		Let $(A,\cdot_A,[\cdot,\cdot,\cdot]_A)$ and $(B,\cdot_B,[\cdot,\cdot,\cdot]_B)$ be two transposed Poisson 3-Lie algebras. Suppose that $\mu_A:A\rightarrow \mathfrak{gl}(B)$, $\rho_A:\otimes^2 A\rightarrow \mathfrak{gl}(B)$ and $\mu_B:B\rightarrow \mathfrak{gl}(A)$, $\rho_B:\otimes^2 B\rightarrow \mathfrak{gl}(A)$ are linear maps. Define two operations $\cdot$ and $[\cdot,\cdot,\cdot]$ on $A\oplus B$ by \eqref{com-asso-sum} and \eqref{3-lie-sum} respectively. Then $(A\oplus B,\cdot,[\cdot,\cdot,\cdot])$ is a transposed Poisson 3-Lie algebra if and only if $(A,B,\rho_A,\mu_A,\rho_B,\mu_B)$ is a matched pair of transposed Poisson 3-Lie algebras. 
	\end{thm}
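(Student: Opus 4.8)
The plan is to follow the template of the proof of Theorem \ref{matched-pair}, substituting the generalized Leibniz rule by the transposed Leibniz rule \eqref{eq:trans-poisson-alg}. First I would observe that most of the work is already done by the two matched-pair conditions built into Definition \ref{def:trans-poisson-matched-pair}: by Proposition \ref{pro:direct-comm-asso} the product $\cdot$ of \eqref{com-asso-sum} makes $A\oplus B$ a commutative associative algebra exactly because $(A,B,\mu_A,\mu_B)$ is a matched pair of commutative associative algebras, and by Proposition \ref{pro:direct-3-Lie} the bracket $[\cdot,\cdot,\cdot]$ of \eqref{3-lie-sum} makes $A\oplus B$ a $3$-Lie algebra exactly because $(A,B,\rho_A,\rho_B)$ is a matched pair of $3$-Lie algebras. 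Hence $(A\oplus B,\cdot,[\cdot,\cdot,\cdot])$ is a transposed Poisson $3$-Lie algebra if and only if, in addition, the transposed Leibniz rule \eqref{eq:trans-poisson-alg} holds for all $w,x,y,z\in A\oplus B$, and the whole argument reduces to analysing this one identity.

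Next I would set $w=x_1+y_1$ and $(x,y,z)=(x_2+y_2,\,x_3+y_3,\,x_4+y_4)$ and expand both sides of \eqref{eq:trans-poisson-alg} using \eqref{com-asso-sum} and \eqref{3-lie-sum}. Since all operations are multilinear, it suffices to test the identity on homogeneous inputs, i.e. with each of $w,x,y,z$ lying entirely in $A$ or entirely in $B$; these fall into $2\times 4$ types according to the summand of the distinguished slot $w$ and the number $k\in\{0,1,2,3\}$ of bracket-arguments lying in $A$. For each such type the ternary bracket $[x,y,z]$ lands in a single summand, but — the feature absent from the ordinary Poisson computation — a product of an element of $A$ with an element of $B$ has, by \eqref{com-asso-sum}, nonzero components in both summands. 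Consequently a single homogeneous choice produces two scalar identities, one valued in $A$ and one valued in $B$, which I would read off and compare with the structure equations of Definition \ref{def:trans-poisson-matched-pair}.

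I expect the collected terms to organise as follows. The two pure types ($w$ together with all three bracket-arguments in one summand) reproduce the transposed Leibniz rules of $A$ and of $B$, which hold by hypothesis. Each type with all three bracket-arguments on one side and $w$ on the other yields one identity that is precisely a representation axiom \eqref{trans:rep-1} or \eqref{trans:rep-2} for $(B,\rho_A,\mu_A)$ or $(A,\rho_B,\mu_B)$ (already assumed) paired with one identity that is \eqref{match-cond-1} or \eqref{match-cond-2}. The types carrying two bracket-arguments on one side produce \eqref{match-cond-3} and \eqref{match-cond-4}, again paired with a representation axiom. Finally, the types with one bracket-argument on one side and two on the other couple a $\rho_B$-valued identity in $A$ with a $\rho_A$-valued identity in $B$; collecting these and using the skew-symmetry of $\rho_A,\rho_B$ together with the representation axioms recasts them as the two five-term conditions \eqref{match-cond-5} and \eqref{match-cond-6}.

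The hard part will be exactly this last group of mixed cases. There the value $[x,y,z]$ lies in the same summand as $w$, so the left-hand term $3\,w\cdot[x,y,z]$ stays homogeneous, but the right-hand products $w\cdot y$ and $w\cdot z$ pair elements of opposite summands and therefore split across $A$ and $B$; this entangles the $A$-component and the $B$-component arising from one homogeneous input with the terms produced by the companion mixed type. Disentangling them — carefully tracking the signs generated by the skew-symmetry of the ternary brackets and matching the outcome term-by-term against \eqref{match-cond-5} and \eqref{match-cond-6} — is the delicate bookkeeping that carries the genuine content of the statement; the remaining cases, while numerous, are of the same but easier flavour as in the proof of Theorem \ref{matched-pair}, and assembling all of them gives the claimed equivalence in both directions.
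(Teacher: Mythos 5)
Your overall route is the same as the paper's: reduce via Propositions \ref{pro:direct-comm-asso} and \ref{pro:direct-3-Lie} to the single identity \eqref{eq:trans-poisson-alg} on $A\oplus B$, expand using \eqref{com-asso-sum} and \eqref{3-lie-sum}, and match the resulting terms against the representation axioms \eqref{trans:rep-1}--\eqref{trans:rep-2} and the conditions \eqref{match-cond-1}--\eqref{match-cond-6}. Your homogeneous-input organization (the $2\times 4$ case types, justified by multilinearity) is a cleaner but equivalent version of the paper's labelled-term bookkeeping, and your first three groups of cases come out exactly as you predict: the pure cases give the transposed Leibniz rules of $A$ and $B$, the cases with $w$ opposite all three bracket arguments give \eqref{match-cond-1}, \eqref{match-cond-2} paired with \eqref{trans:rep-2}, and the two-one cases with $w$ on the majority side give \eqref{match-cond-3}, \eqref{match-cond-4} paired with \eqref{trans:rep-1}.

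The gap is in your last step, precisely at the point you yourself call the hard part. Take $w=x_4\in A$ and bracket arguments $x_3\in A$, $y_1,y_2\in B$. By \eqref{3-lie-sum} the bracket equals $\rho_B(y_1,y_2)x_3\in A$, so, as you correctly observe, the left side $3x_4\cdot\big(\rho_B(y_1,y_2)x_3\big)=3x_4\cdot_A\big(\rho_B(y_1,y_2)x_3\big)$ is a nonzero homogeneous term in $A$; the right side expands to exactly the five terms $\rho_B(y_1,y_2)(x_4\cdot_A x_3)+\rho_B\big(\mu_A(x_4)y_1,y_2\big)x_3+\rho_B\big(y_1,\mu_A(x_4)y_2\big)x_3+\rho_A\big(\mu_B(y_2)x_4,x_3\big)y_1+\rho_A\big(x_3,\mu_B(y_1)x_4\big)y_2$. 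So this case is equivalent to a \emph{six}-term identity in which that five-term sum equals $3x_4\cdot_A\big(\rho_B(y_1,y_2)x_3\big)$, whereas \eqref{match-cond-5} sets the five-term sum to zero. No appeal to skew-symmetry or to the representation axioms can bridge this, since nothing in Definition \ref{def:trans-poisson-matched-pair} controls the internal product $x\cdot_A\big(\rho_B(\cdot,\cdot)x'\big)$ --- note that in the Poisson case the analogous products genuinely appear, in \eqref{cond-7}--\eqref{cond-8}. The mirror case $w=y_4\in B$ likewise produces $3y_4\cdot_B\big(\rho_A(x_1,x_2)y_3\big)$ against the five terms of \eqref{match-cond-6}. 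You should know that the paper's own proof shares this defect: its expansion of $(B1)$ lists only $18$ of the $24$ product terms, the six omitted ones being precisely $3x_4\cdot_A\big(\rho_B(\cdot,\cdot)x_i\big)$ and $3y_4\cdot_B\big(\rho_A(\cdot,\cdot)y_i\big)$, which is why its groupings $(h11)+\cdots+(h15)=0$ and $(j1)+\cdots+(j5)=0$ come out as the printed five-term conditions. A faithful execution of your plan therefore would not ``recast'' the mixed cases as \eqref{match-cond-5}--\eqref{match-cond-6}; it would show the equivalence holds only after those two conditions are amended to carry the left-hand product terms, so the matching step as you describe it fails against the definition as stated.
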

	\begin{proof} 
		By Proposition \ref{pro:direct-3-Lie} and Proposition \ref{pro:direct-comm-asso},  $(A\oplus B,\cdot,[\cdot,\cdot,\cdot])$ is a transposed Poisson 3-Lie algebra if and only if for all $x_i\in A, y_i\in B, 1\leq i\leq 4$,
		\begin{eqnarray*}
			&&\underbrace{3(x_4+y_4)\cdot[x_1+y_1,x_2+y_2,x_3+y_3]}_{(B1)}=\underbrace{[(x_4+y_4)\cdot(x_1+y_1),x_2+y_2,x_3+y_3]}_{(B2)}\\
			&&+\underbrace{[x_1+y_1,(x_4+y_4)\cdot(x_2+y_2),x_3+y_3]}_{(B3)}+\underbrace{[x_1+y_1,x_2+y_2,(x_4+y_4)\cdot(x_3+y_3)]}_{(B4)}.
		\end{eqnarray*}	
		By \eqref{com-asso-sum} and \eqref{3-lie-sum}, we have
		\begin{eqnarray*}
			(B1)&=&3(x_4+y_4)\cdot\big([x_1,x_2,x_3]_A+\rho_B(y_1,y_2)x_3+\rho_B(y_3,y_1)x_2+\rho_B(y_2,y_3)x_1  \\
			&&+[y_1,y_2,y_3]_B+\rho_A(x_1,x_2)y_3+\rho_A(x_3,x_1)y_2+\rho_A(x_2,x_3)y_1\big)\\
			&=&\underbrace{3x_4\cdot_A[x_1,x_2,x_3]_A}_{(a1)}+\underbrace{3y_4\cdot_B[y_1,y_2,y_3]_B}_{(a5)}+\underbrace{3\mu_B(y_4)[x_1,x_2,x_3]_A}_{(d1)}\\
			&&+\underbrace{3\mu_B(y_4)\big(\rho_B(y_1,y_2)x_3\big)}_{(b1)}+\underbrace{3\mu_B(y_4)\big(\rho_B(y_3,y_1)x_2\big)}_{(b5)}+\underbrace{3\mu_B(y_4)\big(\rho_B(y_2,y_3)x_1\big)}_{(b9)}\\
			&&+\underbrace{3\mu_B([y_1,y_2,y_3]_B)x_4}_{(c1)}+\underbrace{3\mu_B\big(\rho_A(x_1,x_2)y_3\big)x_4}_{(f6)}+\underbrace{3\mu_B\big(\rho_A(x_3,x_1)y_2\big)x_4}_{(f4)}\\
			&&+\underbrace{3\mu_B\big(\rho_A(x_2,x_3)y_1\big)x_4}_{(f2)}+\underbrace{3\mu_A(x_4)[y_1,y_2,y_3]_B}_{(e1)}+\underbrace{3\mu_A(x_4)\big(\rho_A(x_1,x_2)y_3\big)}_{(b13)}\\
			&&+\underbrace{3\mu_A(x_4)\big(\rho_A(x_3,x_1)y_2\big)}_{(b17)}+\underbrace{3\mu_A(x_4)\big(\rho_A(x_2,x_3)y_1\big)}_{(b21)}+\underbrace{3\mu_A([x_1,x_2,x_3]_A)y_4}_{(c5)}\\
			&&+\underbrace{3\mu_A\big(\rho_B(y_1,y_2)x_3\big)y_4}_{(g6)}+\underbrace{3\mu_A\big(\rho_B(y_3,y_1)x_2\big)y_4}_{(g4)}+\underbrace{3\mu_A\big(\rho_B(y_2,y_3)x_1\big)y_4}_{(g2)},\\
			(B2)&=&[x_4\cdot_A x_1 + \mu_B(y_4)x_1+ \mu_B(y_1)x_4+y_4\cdot_B y_1+\mu_A(x_4)y_1+\mu_A(x_1)y_4,x_2+y_2,x_3+y_3]\\
			&=&\underbrace{[x_4\cdot_A x_1,x_2,x_3]_A}_{(a2)}+\underbrace{[\mu_B(y_4)x_1,x_2,x_3]_A}_{(d2)}+\underbrace{[\mu_B(y_1)x_4,x_2,x_3]_A}_{(f1)}\\
			&&+\underbrace{\rho_B(y_4\cdot_B y_1,y_2)x_3}_{(b2)}+\underbrace{\rho_B\big(\mu_A(x_4)y_1,y_2\big)x_3}_{(h12)}+\underbrace{\rho_B\big(\mu_A(x_1)y_4,y_2\big)x_3}_{(j7)}  \\
			&&+\underbrace{\rho_B(y_3,y_4\cdot_B y_1)x_2}_{(b6)}+\underbrace{\rho_B\big(y_3,\mu_A(x_4)y_1\big)x_2}_{(h7)}+\underbrace{\rho_B\big(y_3,\mu_A(x_1)y_4\big)x_2}_{(j12)}\\
			&&+\underbrace{\rho_B(y_2,y_3)(x_4\cdot_A x_1)}_{(h1)} + \underbrace{\rho_B(y_2,y_3)\mu_B(y_4)x_1}_{(b10)}+ \underbrace{\rho_B(y_2,y_3)\mu_B(y_1)x_4}_{(c2)}\\
			&&+\underbrace{[y_4\cdot y_1,y_2,y_3]_B}_{(a6)}+\underbrace{[\mu_A(x_4)y_1,y_2,y_3]_B}_{(e2)}+\underbrace{[\mu_A(x_1)y_4,y_2,y_3]_B}_{(g1)}\\
			&&+\underbrace{\rho_A(x_4\cdot_A x_1,x_2)y_3}_{(b14)}+\underbrace{\rho_A\big(\mu_B(y_4)x_1,x_2\big)y_3}_{(j13)}+\underbrace{\rho_A\big(\mu_B(y_1)x_4,x_2\big)y_3}_{(h8)}\\
			&&+\underbrace{\rho_A(x_3,x_4\cdot_A x_1)y_2}_{(b18)}+\underbrace{\rho_A\big(x_3,\mu_B(y_4)x_1\big)y_2}_{(j8)}+\underbrace{\rho_A\big(x_3,\mu_B(y_1)x_4\big)y_2}_{(h13)}\\
			&&+\underbrace{\rho_A(x_2,x_3)(y_4\cdot_B y_1)}_{(j1)}+\underbrace{\rho_A(x_2,x_3)\mu_A(x_4)y_1}_{(b22)}+\underbrace{\rho_A(x_2,x_3)\mu_A(x_1)y_4}_{(c6)},\\
			(B3)&=&[x_1+y_1,x_4\cdot_A x_2 + \mu_B(y_4)x_2+ \mu_B(y_2)x_4+y_4\cdot_B y_2+\mu_A(x_4)y_2+\mu_A(x_2)y_4,x_3+y_3]\\
			&=&\underbrace{[x_1,x_4\cdot_A x_2,x_3]_A}_{(a3)}+\underbrace{[x_1, \mu_B(y_4)x_2,x_3]_A}_{(d3)}+\underbrace{[x_1,\mu_B(y_2)x_4,x_3]_A}_{(f3)}\\
			&&+\underbrace{\rho_B(y_1,y_4\cdot_B y_2)x_3}_{(b3)} +\underbrace{\rho_B\big(y_1,\mu_A(x_4)y_2\big)x_3}_{(h14)}+\underbrace{\rho_B\big(y_1,\mu_A(x_2)y_4\big)x_3}_{(j2)} \\
			&&+\underbrace{\rho_B(y_3,y_1)(x_4\cdot_A x_2)}_{(h6)} + \underbrace{\rho_B(y_3,y_1)\mu_B(y_4)x_2}_{(b7)}+ \underbrace{\rho_B(y_3,y_1)\mu_B(y_2)x_4}_{(c3)}\\
			&&+\underbrace{\rho_B(y_4\cdot_B y_2,y_3)x_1}_{(b11)}+\underbrace{\rho_B\big(\mu_A(x_4)y_2,y_3\big)x_1}_{(h2)}+\underbrace{\rho_B\big(\mu_A(x_2)y_4,y_3\big)x_1}_{(j14)}\\
			&&+\underbrace{[y_1,y_4\cdot_B y_2,y_3]_B}_{(a7)}+\underbrace{[y_1,\mu_A(x_4)y_2,y_3]_B}_{(e3)}+\underbrace{[y_1,\mu_A(x_2)y_4,y_3]_B}_{(g3)}\\
			&&+\underbrace{\rho_A(x_1,x_4\cdot_A x_2)y_3}_{(b15)}+\underbrace{\rho_A\big(x_1,\mu_B(y_4)x_2\big)y_3}_{(j15)}+\underbrace{\rho_A\big(x_1,\mu_B(y_2)x_4\big)y_3}_{(h3)}\\
			&&+\underbrace{\rho_A(x_3,x_1)(y_4\cdot_B y_2)}_{(j6)}+\underbrace{\rho_A(x_3,x_1)\mu_A(x_4)y_2}_{(b19)}+\underbrace{\rho_A(x_3,x_1)\mu_A(x_2)y_4}_{(c7)}\\
			&&+\underbrace{\rho_A(x_4\cdot_A x_2,x_3)y_1}_{(b23)}+\underbrace{\rho_A\big(\mu_B(y_4)x_2,x_3\big)y_1}_{(j3)}+\underbrace{\rho_A\big(\mu_B(y_2)x_4,x_3\big)y_1}_{(h15)},\\
			(B4)&=&[x_1+y_1,x_2+y_2,x_4\cdot_A x_3 + \mu_B(y_4)x_3+ \mu_B(y_3)x_4+y_4\cdot_B y_3+\mu_A(x_4)y_3+\mu_A(x_3)y_4]\\
			&=&\underbrace{[x_1,x_2,x_4\cdot_A x_3]_A}_{(a4)}+\underbrace{[x_1,x_2,\mu_B(y_4)x_3]_A}_{(d4)}+\underbrace{[x_1,x_2,\mu_B(y_3)x_4]_A}_{(f5)}\\
			&&+\underbrace{\rho_B(y_1,y_2)(x_4\cdot_A x_3)}_{(h11)} + \underbrace{\rho_B(y_1,y_2)\mu_B(y_4)x_3}_{(b4)}+ \underbrace{\rho_B(y_1,y_2)\mu_B(y_3)x_4}_{(c4)}  \\
			&&+\underbrace{\rho_B(y_4\cdot_B y_3,y_1)x_2}_{(b8)}+\underbrace{\rho_B\big(\mu_A(x_4)y_3,y_1\big)x_2}_{(h9)}+\underbrace{\rho_B\big(\mu_A(x_3)y_4,y_1\big)x_2}_{(j4)}\\
			&&+\underbrace{\rho_B(y_2,y_4\cdot_B y_3)x_1}_{(b12)}+\underbrace{\rho_B\big(y_2,\mu_A(x_4)y_3\big)x_1}_{(h4)}+\underbrace{\rho_B\big(y_2,\mu_A(x_3)y_4\big)x_1}_{(j9)}\\
			&&+\underbrace{[y_1,y_2,y_4\cdot_B y_3]_B}_{(a8)}+\underbrace{[y_1,y_2,\mu_A(x_4)y_3]_B}_{(e4)}+\underbrace{[y_1,y_2,\mu_A(x_3)y_4]_B}_{(g5)}\\
			&&+\underbrace{\rho_A(x_1,x_2)(y_4\cdot_B y_3)}_{(j11)}+\underbrace{\rho_A(x_1,x_2)\mu_A(x_4)y_3}_{(b16)}+\underbrace{\rho_A(x_1,x_2)\mu_A(x_3)y_4}_{(c8)}\\
			&&+\underbrace{\rho_A(x_4\cdot_A x_3,x_1)y_2}_{(b20)}+\underbrace{\rho_A\big(\mu_B(y_4)x_3,x_1\big)y_2}_{(j10)}+\underbrace{\rho_A\big( \mu_B(y_3)x_4,x_1\big)y_2}_{(h5)}\\
			&&+\underbrace{\rho_A(x_2,x_4\cdot_A x_3)y_1}_{(b24)}+\underbrace{\rho_A\big(x_2,\mu_B(y_4)x_3\big)y_1}_{(j5)}+\underbrace{\rho_A\big(x_2, \mu_B(y_3)x_4\big)y_1}_{(h10)}.	
		\end{eqnarray*}
		By \eqref{eq:trans-poisson-alg}, we have $$(a1)=(a2)+(a3)+(a4),~ (a5)=(a6)+(a7)+(a8).$$
		By \eqref{trans:rep-1}, we have $$(b1)=(b2)+(b3)+(b4), ~(b5)=(b6)+(b7)+(b8), ~(b9)=(b10)+(b11)+(b12),$$ 
		$$(b13)=(b14)+(b15)+(b16), ~(b17)=(b18)+(b19)+(b20), ~(b21)=(b22)+(b23)+(b24).$$
		By \eqref{trans:rep-2}, we have $$(c1)=(c2)+(c3)+(c4), ~(c5)=(c6)+(c7)+(c8).$$
		Moreover, we have the following equivalences
		\begin{eqnarray*}
			(d1)=(d2)+(d3)+(d4) &\Leftrightarrow& \eqref{match-cond-1};\\
			(e1)=(e2)+(e3)+(e4) &\Leftrightarrow& \eqref{match-cond-2};\\
			(f1)=(f2), ~(f3)=(f4), ~(f5)=(f6) 	&\Leftrightarrow& \eqref{match-cond-3};\\
			(g1)=(g2), ~(g3)=(g4), ~(g5)=(g6) &\Leftrightarrow& \eqref{match-cond-4};\\
			(h1)+(h2)+(h3)+(h4)+(h5)=0 &\Leftrightarrow& \eqref{match-cond-5};\\ 
			(h6)+(h7)+(h8)+(h9)+(h10)=0 &\Leftrightarrow& \eqref{match-cond-5};\\
			(h11)+(h12)+(h13)+(h14)+(h15)=0&\Leftrightarrow& \eqref{match-cond-5};\\
			(j1)+(j2)+(j3)+(j4)+(j5)=0 &\Leftrightarrow& \eqref{match-cond-6};\\
			(j6)+(j7)+(j8)+(j9)+(j10)=0 &\Leftrightarrow& \eqref{match-cond-6};\\
			(j11)+(j12)+(j13)+(j14)+(j15)=0 &\Leftrightarrow& \eqref{match-cond-6}.
		\end{eqnarray*}
		Therefore, $(A,B,\rho_A,\mu_A,\rho_B,\mu_B)$ is a matched pair of transposed Poisson 3-Lie algebras if and only if $(A\oplus B,\cdot,[\cdot,\cdot,\cdot])$ is  a transposed  Poisson 3-Lie algebra. 	
	\end{proof}
	\begin{pro} \label{pro:no-bi-form}
		Let $(B,\cdot,[\cdot,\cdot,\cdot])$ be a transposed Poisson 3-Lie algebra. If there is a nondegenerate invariant symmetric bilinear form $\mathcal{B}$ such that it is invariant on both $(B,\cdot)$ and $(B,[\cdot,\cdot,\cdot])$, i.e., $\mathcal{B}(x\cdot y,z)=\mathcal{B}(x, y\cdot z)$ and $\mathcal{B}([x,y,z],w)=-\mathcal{B}([x,y,w],z)$, for all $ x,y,z,w\in B$, then we have 
		\begin{equation}\label{tp-alg-bilinear-form}
			w\cdot[x,y,z]=[w\cdot x,y,z]=0.
		\end{equation}
	\end{pro}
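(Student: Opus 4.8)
The plan is to compress the two invariance hypotheses into a single totally alternating four-linear form and then to play the symmetry of the commutative product against the skew-symmetry of the bracket. Set $\Phi(a,b,c,d):=\mathcal{B}([a,b,c],d)$. Since $[\cdot,\cdot,\cdot]$ is skew-symmetric, $\Phi$ is alternating in its first three slots, and the hypothesis $\mathcal{B}([x,y,z],w)=-\mathcal{B}([x,y,w],z)$ makes $\Phi$ alternating under the exchange of the third and fourth slots as well; as these transpositions generate $S_4$, the form $\Phi$ is totally alternating. Because $\mathcal{B}$ is nondegenerate, the whole proposition reduces to proving $\mathcal{B}([w\cdot x,y,z],u)=0$ for all $u$: this yields $[w\cdot x,y,z]=0$, and then the transposed Leibniz rule \eqref{eq:trans-poisson-alg}, together with the vanishing of the two companion terms $[x,w\cdot y,z]$ and $[x,y,w\cdot z]$ (which are again of the form ``bracket with a product inside'', hence zero), forces $3\,w\cdot[x,y,z]=0$, so $w\cdot[x,y,z]=0$ too, which is exactly \eqref{tp-alg-bilinear-form}.

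First I would pair \eqref{eq:trans-poisson-alg} with an arbitrary $u$ through $\mathcal{B}$ and use the associative invariance to move the multiplier $w$ onto $u$ on the left-hand side. Writing $P_i$ for the value of $\Phi$ with $w$ multiplied into the $i$-th of the four slots (the fourth being the pairing slot $u$, so $P_4=\Phi(x,y,z,w\cdot u)$), this collapses to the single relation $P_1+P_2+P_3=3P_4$. Substituting permuted arguments into this identity and reordering by the total antisymmetry of $\Phi$ produces the companion relation $P_2+P_3+P_4=3P_1$; subtracting the two gives $P_1=P_4$, and the remaining permutations give $P_1=P_2=P_3=P_4$. Unwinding $P_1=P_4$ with the associative invariance delivers the ``half-derivation'' identity $[w\cdot x,y,z]=w\cdot[x,y,z]$, and by commutativity of $\cdot$ also $[w\cdot x,y,z]=x\cdot[w,y,z]$.

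With this identity in hand I would fix $y,z$ and consider the bilinear form $F(a,b):=\mathcal{B}([a,y,z],b)$, which is antisymmetric by the total antisymmetry of $\Phi$. The half-derivation identity, combined once more with associative invariance, upgrades to $F(w\cdot x,u)=F(w,x\cdot u)$, i.e. $F$ is itself invariant for the commutative product. The decisive step is then the closed loop
\[
F(w\cdot x,u)=F(w,x\cdot u)=-F(x\cdot u,w)=-F(x,u\cdot w)=-F(x,w\cdot u)=-F(x\cdot w,u)=-F(w\cdot x,u),
\]
in which each equality uses either the invariance of $F$, its antisymmetry, or commutativity of $\cdot$; this forces $F(w\cdot x,u)=0$ for all arguments, hence $\mathcal{B}([w\cdot x,y,z],u)=0$ and therefore $[w\cdot x,y,z]=0$ by nondegeneracy.

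The main obstacle is that the first identity is genuinely insufficient by itself: it only yields $P_1=\cdots=P_4$, that is, the half-derivation relation, and every naive attempt to feed that relation back through the invariances is circular and returns a tautology. What actually breaks the symmetry is the tension between the two structures, namely that the multiplication operators are self-adjoint for $\mathcal{B}$ while the bracket pairing $\Phi$ is alternating, and this is precisely what the final loop on $F$ exploits to identify $F(w\cdot x,u)$ with its own negative. The only arithmetic to watch is that the reductions divide by $3$ and by $2$, which is legitimate in characteristic zero, and that the step pulling $w$ out of either factor of $w\cdot x$ is justified by commutativity.
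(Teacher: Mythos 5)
Your proof is correct, but it finishes by a genuinely different mechanism than the paper's. Both arguments open the same way: pair the transposed Leibniz rule \eqref{eq:trans-poisson-alg} with an extra element through $\mathcal{B}$ and use the two invariances. From there the paper invokes nondegeneracy immediately to descend to two element-level identities in $B$ mixing the product into the bracket slots, and then concludes by a second, brute-force application of \eqref{eq:trans-poisson-alg} with rational-coefficient bookkeeping, showing that the left-hand side of the first identity equals a nonzero scalar multiple of $w\cdot[x,y,z]$ (the printed coefficient $\tfrac{4}{9}$ is an arithmetic slip for $4$, and the displayed intermediate identities carry stray typos such as an extra ``$\cdot y$''), and symmetrically for $[w\cdot x,y,z]$. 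You instead stay at the level of the four-linear form $\Phi(a,b,c,d)=\mathcal{B}([a,b,c],d)$, whose total antisymmetry you correctly deduce from the generating transpositions $(12),(23),(34)$; the $S_4$-action on your relation $P_1+P_2+P_3=3P_4$ (the permuted instance under the odd $4$-cycle indeed yields $P_2+P_3+P_4=3P_1$) gives $P_1=\cdots=P_4$, hence the structural intermediate identity $[w\cdot x,y,z]=w\cdot[x,y,z]$, which the paper never isolates; and the final loop on $F(a,b)=\mathcal{B}([a,y,z],b)$ is the classical clash between an invariant pairing for which multiplication operators are self-adjoint and an alternating form, forcing $F(w\cdot x,u)=-F(w\cdot x,u)$. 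What each route buys: the paper's is conceptually a one-step expansion but opaque and error-prone in its fractions; yours trades that for cleaner arithmetic (only divisions by $2$, $3$, $4$, legitimate in characteristic zero, as you note) plus reusable byproducts, namely the half-derivation identity and the self-adjointness observation, at the cost of the extra lemma that sign-change under generating transpositions implies total antisymmetry. Your closing step $w\cdot[x,y,z]=[w\cdot x,y,z]=0$ also follows directly from the half-derivation identity, without re-invoking \eqref{eq:trans-poisson-alg} as you do; either way, \eqref{tp-alg-bilinear-form} is established.
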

	\begin{proof}
		For all $x,y,z,w,u\in B$, by the invariance on both $(B,\cdot)$ and $(B,[\cdot,\cdot,\cdot])$, we have 
		\begin{eqnarray*}
			0&=&\mathcal{B}(3u\cdot[x,y,z]-[u\cdot x,y,z]-[x,u\cdot y,z]-[x,y,u\cdot z],w)\\
			&=&\mathcal{B}(3w\cdot[x,y,z]+[y,z,w]\cdot x-[x,z,w]\cdot y+[x,y,w]\cdot z,u)\\
			&=&\mathcal{B}(-[x,y,3u\cdot w]+[u\cdot x,y,w]+[x,u\cdot y,w]\cdot y+[x,y,w]\cdot u,w).
		\end{eqnarray*}
		By the nondegeneracy of $\mathcal{B}$, we obtain
		\begin{eqnarray}\label{eq:mix1}
			3w\cdot[x,y,z]+[y,z,w]\cdot x-[x,z,w]\cdot y+[x,y,w]\cdot z=0,
\end{eqnarray}
and
\begin{eqnarray}\label{eq:mix2}
			-[x,y,3u\cdot w]+[u\cdot x,y,w]+[x,u\cdot y,w]\cdot y+[x,y,w]\cdot u=0.
		\end{eqnarray}
		Applying \eqref{eq:trans-poisson-alg} to \eqref{eq:mix1}, we obtain
		\begin{eqnarray*}
			&&3w\cdot[x,y,z]+[y,z,w]\cdot x-[x,z,w]\cdot y+[x,y,w]\cdot z\\
			&\overset{\eqref{eq:trans-poisson-alg}}{=}&[w\cdot x,y,z]+[x,w\cdot y,z]+[x,y,w\cdot z]+\frac{1}{3}([x\cdot y,z,w]+[y,x\cdot z,w]+[y,z,x\cdot w]\\
			&&-[y\cdot x,z,w]-[x,y\cdot z,w]-[x,z,y\cdot w]+[z\cdot x,y,w]+[x,z\cdot y,w]+[x,y,z\cdot w])\\
			&=&\frac{4}{3}([w\cdot x,y,z]+[x,w\cdot y,z]+[x,y,w\cdot z])\\
			&\overset{\eqref{eq:trans-poisson-alg}}{=}&\frac{4}{9}(w\cdot[x,y,z])\\
			&=&0.
		\end{eqnarray*}
Therefore, 	$w\cdot[x,y,z]=0$.	Similarly, applying \eqref{eq:trans-poisson-alg} to \eqref{eq:mix2}, we obtain $[w\cdot x,y,z]=0$. The conclusion holds.
	\end{proof}
	\begin{rmk}
		If the mixed products  $w\cdot[x,y,z]\neq0$ or $[w\cdot x,y,z]\neq0$ in a transposed Poisson 3-Lie algebra $(B,\cdot,[\cdot,\cdot,\cdot])$, then there is no non-trivial, non-degenerate invariant symmetric bilinear form $\mathcal{B}$ on $(B,\cdot,[\cdot,\cdot,\cdot])$. Proposition \ref{pro:no-bi-form} shows that the existence of such a non-trivial form would make the transposed 3-Lie Leibniz rule \eqref{eq:trans-poisson-alg} trivial, and thus there is no ``natural'' bialgebra theory for transposed Poisson 3-Lie algebras as there is for Poisson algebras in \cite{X. Ni}.
	\end{rmk}
	\section{Double construction admissible transposed  Poisson 3-Lie bialgebras}\label{sec:ad-poisson}
	
	Although there is no ``natural'' bialgebra theory for transposed Poisson 3-Lie algebras, we can still use the double construction Poisson 3-Lie bialgebras from Section \ref{sec:double-Poisson} to induce bialgebras for transposed Poisson  3-Lie algebras. This is because the intersection of transposed 3-Lie Poisson algebras and Poisson 3-Lie algebras includes admissible transposed Poisson 3-Lie algebras. This approach differs from Liu and Bai's method, which uses a commutative 2-cocycle on a Lie algebra to present the bialgebra theory of transposed Poisson algebras \cite{G. Liu}.
	
	In this section, we introduce representations, matched pairs, and double construction admissible transposed Poisson 3-Lie bialgebras. We prove that a matched pair of admissible transposed Poisson 3-Lie algebras is equivalent to a double construction admissible transposed Poisson 3-Lie bialgebra (Theorem \ref{thm:adm-m-d}). Additionally, we provide a non-trivial example of such a bialgebra (Example \ref{ex:no-bi}).
	
	\subsection{Representations of admissible transposed Poisson 3-Lie algebras}
	\begin{pro}
		Let $(V, \mu)$  be a  representation  of commutative associative algebra $(A,\cdot)$ and $(V, \rho)$ be a  representation  of  3-Lie algebra $(A,[\cdot,\cdot,\cdot])$. If $(V,\rho, \mu)$ is both  a  representation  of Poisson 3-Lie algebra $(A,\cdot,[\cdot,\cdot,\cdot])$ and  a  representation  of transposed Poisson 3-Lie algebra $(A,\cdot,[\cdot,\cdot,\cdot])$ if and only if	
		\begin{equation}\label{eq:trans-poisson-and-poisson-rep-prop}
			\mu(x)\rho(y,z)=\rho(y,z)\mu(x)=\rho(x\cdot y,z)=\mu([x,y,z])=0, \quad \forall~ x,y,z \in A.
		\end{equation}
	\end{pro}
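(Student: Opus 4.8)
The plan is to deduce this representation-level statement from its algebra-level counterpart, Proposition~\ref{pro:both-trans-poisson-and-poisson}, by passing to semi-direct products. By Proposition~\ref{pro:possion-semi-direct} and Proposition~\ref{pro:trans-poisson-semi-direct}, the triple $(V,\rho,\mu)$ is a representation of the Poisson $3$-Lie algebra $(A,\cdot,[\cdot,\cdot,\cdot])$ if and only if $A\ltimes_{\mu,\rho}V$ (with operations $\cdot_\mu$, $[\cdot,\cdot,\cdot]_\rho$ from \eqref{tp-semi-1}--\eqref{tp-semi-2}) is a Poisson $3$-Lie algebra, and likewise it is a representation of the transposed Poisson $3$-Lie algebra $(A,\cdot,[\cdot,\cdot,\cdot])$ if and only if the same $A\ltimes_{\mu,\rho}V$ is a transposed Poisson $3$-Lie algebra. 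Since the two semi-direct product structures coincide, $(V,\rho,\mu)$ is simultaneously a representation of both if and only if $A\ltimes_{\mu,\rho}V$ is simultaneously a Poisson $3$-Lie algebra and a transposed Poisson $3$-Lie algebra.

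First I would apply Proposition~\ref{pro:both-trans-poisson-and-poisson} to the commutative associative algebra $(A\oplus V,\cdot_\mu)$ together with the $3$-Lie algebra $(A\oplus V,[\cdot,\cdot,\cdot]_\rho)$: the product $A\ltimes_{\mu,\rho}V$ is both a Poisson and a transposed Poisson $3$-Lie algebra if and only if the admissibility identity \eqref{eq:trans-poisson-and-poisson} holds on $A\oplus V$, that is, $\mathbf{u}\cdot_\mu[\mathbf{x},\mathbf{y},\mathbf{z}]_\rho=[\mathbf{u}\cdot_\mu\mathbf{x},\mathbf{y},\mathbf{z}]_\rho=0$ for all $\mathbf{u},\mathbf{x},\mathbf{y},\mathbf{z}\in A\oplus V$. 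I would then expand these two identities using \eqref{tp-semi-1} and \eqref{tp-semi-2}, writing each argument as $x_i+u_i$ and separating the $A$-component from the $V$-component. The $A$-components are precisely $w\cdot[x,y,z]$ and $[w\cdot x,y,z]$ computed in $A$, which vanish automatically because $A$ is admissible (Proposition~\ref{pro:both-trans-poisson-and-poisson}).

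The heart of the argument is the $V$-component. After expansion these read $\mu(w)\big(\rho(x_1,x_2)u_3-\rho(x_1,x_3)u_2+\rho(x_2,x_3)u_1\big)+\mu([x_1,x_2,x_3])u_0=0$ and $\rho(w\cdot x_1,x_2)u_3-\rho(w\cdot x_1,x_3)u_2+\rho(x_2,x_3)\big(\mu(w)u_1+\mu(x_1)u_0\big)=0$. Letting the independent vectors $u_0,u_1,u_2,u_3\in V$ vary one at a time forces $\mu(x)\rho(y,z)=0$, $\mu([x,y,z])=0$, $\rho(x\cdot y,z)=0$, and $\rho(y,z)\mu(x)=0$, which is exactly \eqref{eq:trans-poisson-and-poisson-rep-prop}; conversely, if these four identities hold then every $V$-component above vanishes, so the equivalence is reversible. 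The main obstacle is the bookkeeping in this expansion --- in particular, correctly using the skew-symmetry of $\rho$ (so that, e.g., $\rho(y,x\cdot z)=-\rho(x\cdot z,y)$) and of $[\cdot,\cdot,\cdot]$ when splitting the mixed expression into components, and confirming that the coefficient of each independent $u_i$ vanishes separately.

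As a self-contained alternative that bypasses the semi-direct product, one can combine the four defining identities \eqref{rep-1}, \eqref{rep-2}, \eqref{trans:rep-1}, \eqref{trans:rep-2} directly: substituting \eqref{rep-1} into \eqref{trans:rep-1} (via skew-symmetry of $\rho$) and \eqref{rep-2} into \eqref{trans:rep-2} (via $[y,z,x]=[x,y,z]$ and $[x,z,y]=-[x,y,z]$) yields $\rho(y,z)\mu(x)=0$ and $\mu([x,y,z])=0$, after which \eqref{rep-1} and \eqref{rep-2} return $\rho(x\cdot y,z)=0$ and $\mu(x)\rho(y,z)=0$; the converse is a direct substitution into all four identities.
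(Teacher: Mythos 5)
Your proposal is correct, and your main argument takes a genuinely different route from the paper's. The paper proves the proposition by direct equational manipulation of the four representation axioms: it substitutes \eqref{rep-2} into \eqref{trans:rep-2} to obtain the cyclic identity $\mu(x)\rho(y,z)+\mu(y)\rho(z,x)+\mu(z)\rho(x,y)=0$, then cycles \eqref{trans:rep-1} and uses skew-symmetry of $\rho$ to force $\mu([x,y,z])=0$, and finally combines \eqref{rep-1}, \eqref{rep-2} and the resulting commutation $\mu(x)\rho(y,z)=\rho(y,z)\mu(x)$ to kill the remaining products. You instead transfer the algebra-level criterion of Proposition \ref{pro:both-trans-poisson-and-poisson} through the two semi-direct product equivalences (Propositions \ref{pro:possion-semi-direct} and \ref{pro:trans-poisson-semi-direct}): since both structures on $A\oplus V$ share the same operations $\cdot_\mu$ and $[\cdot,\cdot,\cdot]_\rho$, being a representation of both is equivalent to $A\ltimes_{\mu,\rho}V$ satisfying \eqref{eq:trans-poisson-and-poisson}, and extracting the coefficient of each independent $u_i$ from your two displayed $V$-components (which I checked and which are correct) yields exactly \eqref{eq:trans-poisson-and-poisson-rep-prop}, with the converse immediate. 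What your route buys is conceptual economy and a painless converse; what it quietly uses is that $A$ itself is admissible, so that the $A$-components $w\cdot[x_1,x_2,x_3]$ and $[w\cdot x_1,x_2,x_3]$ vanish --- this is implicit in the statement (one cannot speak of representations of both structures unless $(A,\cdot,[\cdot,\cdot,\cdot])$ carries both, hence satisfies \eqref{eq:trans-poisson-and-poisson}), but you should say so explicitly, since the paper's forward computation never needs it. Finally, your closing ``self-contained alternative'' is essentially the paper's own proof, and in fact a correct streamlining of it: the cyclic sum produced by substituting \eqref{rep-1} into \eqref{trans:rep-1} coincides with the one obtained from \eqref{rep-2} and \eqref{trans:rep-2}, so $\rho(y,z)\mu(x)=0$ drops out at once, after which \eqref{trans:rep-2}, \eqref{rep-2} and \eqref{rep-1} return $\mu([x,y,z])=0$, $\mu(x)\rho(y,z)=0$ and $\rho(x\cdot y,z)=0$ in turn, shortcutting the paper's longer chain.
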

	\begin{proof}
		For all $x,y,z\in A$, by \eqref{rep-2} and \eqref{trans:rep-2}, we have
		\begin{align*}
			3\mu([x,y,z])&=\mu([y,z,x])+\mu(x)\rho(y,z)-\mu([x,z,y])-\mu(y)\rho(x,z)+\mu([x,y,z])+\mu(z)\rho(x,y)\\
			&=3\mu([x,y,z])+\mu(x)\rho(y,z)+\mu(y)\rho(z,x)+\mu(z)\rho(x,y).
		\end{align*}
		Then
		\begin{align*}
			\mu(x)\rho(y,z)+\mu(y)\rho(z,x)+\mu(z)\rho(x,y)=0.
		\end{align*}
		By \eqref{trans:rep-1}, we have 
		\begin{align*}
			0&=3\mu(x)\rho(y,z)+3\mu(y)\rho(z,x)+3\mu(z)\rho(x,y)\\
			&=\rho(x\cdot y,z)+\rho(y,x\cdot z)+\rho(y,z)\mu(x)+\rho(y\cdot z,x)+\rho(z,x\cdot y)+\rho(z,x)\mu(y)\\
			&+\rho(z\cdot x, y)+\rho(x,y\cdot z)+\rho(x,y)\mu(z).
		\end{align*}
		Then
		\begin{align*}
			0=\rho(y,z)\mu(x)+\rho(z,x)\mu(y)+\rho(x,y)\mu(z)=3\mu([x,y,z]).
		\end{align*}
		Furthermore, by \eqref{rep-2} and \eqref{trans:rep-1}, we have 
		\begin{align*}
			\mu(x)\rho(y,z)&=\rho(y,z)\mu(x),\\
			2\mu(x)\rho(y,z)&=\rho(x\cdot y,z)+\rho(y,x\cdot z).
		\end{align*} 
		Moreover, by \eqref{rep-1} and \eqref{trans:rep-2}, we have
		\begin{align*}
			0&=\rho(y,z)\mu(x)+\rho(z,x)\mu(y)+\rho(x,y)\mu(z)\\
			&=\rho(y,z)\mu(x)+\rho(x,z)\mu(y)+2\rho(z,x)\mu(y)+\rho(x,y)\mu(z)\\
			&=\rho(x\cdot y, z)+\rho(y\cdot z,x)+\rho(z,y\cdot x)+\rho(x,y)\mu(z)\\
			&=\rho(x,y)\mu(z)+\rho(y\cdot z,x).
		\end{align*}
		Then $$\mu(z)\rho(y,x)=\rho(y\cdot z,x).$$
		By \eqref{rep-1}, we have 
		$$0=\mu(x)\rho(y,z)+\mu(y)\rho(x,z)-\rho(x\cdot y,z)=\mu(x)\rho(y,z).$$
		Therefore, \eqref{eq:trans-poisson-and-poisson-rep-prop} holds, and the converse also holds.  The proof is completed.
	\end{proof}
	\begin{defi}
		Let $(V,\mu)$ be a  representation of commutative associative algebra $(B,\cdot)$ and $(V,\rho)$ be  a representation of 3-Lie algebra $(B,[\cdot,\cdot,\cdot])$. If $\mu$ and $\rho$ satisfy \eqref{eq:trans-poisson-and-poisson-rep-prop}, then the triple $(V,\rho,\mu)$ is called a \textbf{representation of an admissible transposed Poisson 3-Lie algebra}.
	\end{defi}
	Let $(A,\cdot, [\cdot,\cdot,\cdot])$ be an admissible transposed Poisson 3-Lie algebra, $(A,\mathrm{ad})$ and $(A,\mathcal{L})$ be the adjoint representations of $(A, [\cdot,\cdot,\cdot])$ and $(A,\cdot)$, respectively. Then $(A,\mathrm{ad},\mathcal{L})$ is a representation of $(A,\cdot, [\cdot,\cdot,\cdot])$, which is called the \textbf{adjoint representation of  admissible transposed Poisson 3-Lie algebras}.
	\begin{pro}
		Let $(A,\cdot, [\cdot,\cdot,\cdot])$ be an admissible transposed Poisson 3-Lie algebra and $(V,\rho,\mu)$ be a representation of $(A,\cdot, [\cdot,\cdot,\cdot])$. Then $(V^*,\rho^*,-\mu^*)$ is a representation of $(A,\cdot, [\cdot,\cdot,\cdot])$. This representation is called the dual representation of $(V,\rho,\mu)$. In particular,  $(A^*,\mathrm{ad}^*,-\mathcal{L}^*)$ is a representation of $(A,\cdot, [\cdot,\cdot,\cdot])$, which is called the \textbf{coadjoint representation of admissible transposed  Poisson 3-Lie algebras}. 
	\end{pro}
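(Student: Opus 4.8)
The plan is to reduce everything to the two dual-representation results already established, namely Proposition~\ref{pro:3-lie-dual-rep} and Proposition~\ref{pro:com-asso-dual-rep}, and then check only the extra admissibility constraint. First I would invoke Proposition~\ref{pro:3-lie-dual-rep} to conclude that $(V^*,\rho^*)$ is a representation of the $3$-Lie algebra $(A,[\cdot,\cdot,\cdot])$, and Proposition~\ref{pro:com-asso-dual-rep} to conclude that $(V^*,-\mu^*)$ is a representation of the commutative associative algebra $(A,\cdot)$. Thus the only thing left is to verify that the triple $(V^*,\rho^*,-\mu^*)$ satisfies the four vanishing conditions in \eqref{eq:trans-poisson-and-poisson-rep-prop}, after which $(V^*,\rho^*,-\mu^*)$ is, by the definition of a representation of an admissible transposed Poisson $3$-Lie algebra, exactly what we want.

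Second, I would verify each of the four identities by pairing against an arbitrary $\xi\in V^*$ and $v\in V$ and unwinding the defining relations \eqref{3-lie-dual} and \eqref{com-asso-dual}. For the two identities not involving a composition of operators this is immediate: $\langle\rho^*(x\cdot y,z)\xi,v\rangle=-\langle\xi,\rho(x\cdot y,z)v\rangle=0$ and $\langle(-\mu^*)([x,y,z])\xi,v\rangle=\langle\xi,\mu([x,y,z])v\rangle=0$, since $\rho(x\cdot y,z)=0$ and $\mu([x,y,z])=0$ by \eqref{eq:trans-poisson-and-poisson-rep-prop} applied to $(V,\rho,\mu)$. For the two mixed-composition identities I would compute $\langle(-\mu^*)(x)\rho^*(y,z)\xi,v\rangle=-\langle\xi,\rho(y,z)\mu(x)v\rangle$ and $\langle\rho^*(y,z)(-\mu^*)(x)\xi,v\rangle=-\langle\xi,\mu(x)\rho(y,z)v\rangle$, both of which vanish because $\rho(y,z)\mu(x)=\mu(x)\rho(y,z)=0$.

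The only point worth flagging — and the sole place where one must be slightly careful — is that dualization reverses the order of composition of operators, so $(-\mu^*)(x)\rho^*(y,z)$ corresponds to $\rho(y,z)\mu(x)$ while $\rho^*(y,z)(-\mu^*)(x)$ corresponds to $\mu(x)\rho(y,z)$. There is no genuine obstacle here, precisely because the admissibility condition \eqref{eq:trans-poisson-and-poisson-rep-prop} is symmetric, requiring that \emph{both} orders $\mu(x)\rho(y,z)$ and $\rho(y,z)\mu(x)$ vanish; this symmetry is exactly what lets the dual data close up again into an admissible representation, in contrast with the genuine transposed Poisson $3$-Lie case (cf.\ Proposition~\ref{pro:trans-poisson-dual-rep}), where a dual representation need not exist. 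Finally, specializing $\rho=\mathrm{ad}$ and $\mu=\mathcal{L}$ yields that $(A^*,\mathrm{ad}^*,-\mathcal{L}^*)$ is a representation, giving the coadjoint representation of admissible transposed Poisson $3$-Lie algebras.
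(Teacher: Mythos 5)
Your proof is correct, but it takes a more direct route than the paper's. The paper's own proof is a one-line citation: it derives the result from Proposition \ref{pro:poisson-dual} (dual representations of Poisson 3-Lie algebras) together with Proposition \ref{pro:trans-poisson-dual-rep} (the dual of a transposed Poisson 3-Lie representation exists precisely when \eqref{dual-rep-prop} holds) --- the implicit point being that an admissible representation satisfies \eqref{eq:trans-poisson-and-poisson-rep-prop}, which makes \eqref{dual-rep-prop} hold trivially, so the dual triple is simultaneously a Poisson and a transposed Poisson representation, and hence admissible by the equivalence established just before the definition. You bypass both intermediate results: after invoking Propositions \ref{pro:3-lie-dual-rep} and \ref{pro:com-asso-dual-rep} for the underlying 3-Lie and commutative associative structures, you verify the four vanishing conditions of \eqref{eq:trans-poisson-and-poisson-rep-prop} for $(\rho^*,-\mu^*)$ directly by pairing against $\xi\in V^*$, $v\in V$, and your computations are accurate, including the order reversal under dualization, $(-\mu^*)(x)\rho^*(y,z)$ corresponding to $\rho(y,z)\mu(x)$ and $\rho^*(y,z)(-\mu^*)(x)$ to $\mu(x)\rho(y,z)$. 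What your route buys is self-containedness and an explicit explanation of \emph{why} the admissible case closes up under duality while the general transposed Poisson case (Proposition \ref{pro:trans-poisson-dual-rep}) does not: the admissibility condition kills both orders of composition, so the reversal caused by dualization is harmless. What the paper's route buys is brevity, at the cost of leaving the verification of the hypothesis \eqref{dual-rep-prop} and the final appeal to the both-implies-admissible equivalence implicit.
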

	\begin{proof}
		It is follows from Proposition \ref{pro:poisson-dual} and Proposition \ref{pro:trans-poisson-dual-rep}.
	\end{proof}
	\begin{pro}
		Let $(B,\cdot, [\cdot,\cdot,\cdot])$ be an admissible transposed Poisson 3-Lie algebra and $V$ be a vector space. Let $\mu:B\rightarrow \mathfrak{gl}(V)$ and $\rho:\otimes^2 B\rightarrow\mathfrak{gl}(V)$ be linear maps. Then $(V,\rho,\mu)$ is a representation of $(B,\cdot, [\cdot,\cdot,\cdot])$ if and only if $(B\oplus V, \cdot_\mu, [\cdot,\cdot,\cdot]_\rho)$ is an admissible  transposed Poisson 3-Lie algebra, where the product $\cdot_{\mu}$ and the bracket $[\cdot,\cdot,\cdot]_{\rho}$ are given by
		\begin{align*}
			(x_1+u_1)\cdot_{\mu}(x_2+u_2)&=x_1\cdot x_2+ \mu(x_1)u_2+\mu(x_2)u_1,\\
			[x_1+u_1,x_2+u_2,x_3+u_3]_{\rho}&=[x_1,x_2,x_3]+\rho(x_1,x_2)u_3-\rho(x_1,x_3)u_2+\rho(x_2,x_3)u_1,
		\end{align*}
		for all $x_1,x_2,x_3\in B, u_1,u_2,u_3\in V$. We denote it by $B\ltimes_{\mu,\rho}V$, which is called \textbf{semi-direct product admissible transposed Poisson 3-Lie algebras}. 
	\end{pro}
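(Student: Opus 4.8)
The plan is to reduce the statement to the semidirect product decompositions already recorded in the excerpt and then to verify the two admissibility identities \eqref{eq:trans-poisson-and-poisson} by a direct computation. By the same argument as in Proposition \ref{pro:possion-semi-direct} (see also Proposition \ref{pro:trans-poisson-semi-direct}), $(B\oplus V,\cdot_\mu)$ is a commutative associative algebra if and only if $(V,\mu)$ is a representation of $(B,\cdot)$, and $(B\oplus V,[\cdot,\cdot,\cdot]_\rho)$ is a 3-Lie algebra if and only if $(V,\rho)$ is a representation of $(B,[\cdot,\cdot,\cdot])$. Since both an admissible transposed Poisson 3-Lie algebra and a representation thereof require exactly these two underlying structures plus a vanishing mixed-product condition, it remains only to show that, granting these two structures, the admissibility conditions $(x_4+u_4)\cdot_\mu[x_1+u_1,x_2+u_2,x_3+u_3]_\rho=0$ and $[(x_4+u_4)\cdot_\mu(x_1+u_1),x_2+u_2,x_3+u_3]_\rho=0$ hold for all $x_i\in B$, $u_i\in V$ if and only if \eqref{eq:trans-poisson-and-poisson-rep-prop} holds.

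First I would expand the first identity using \eqref{tp-semi-1} and \eqref{tp-semi-2}, obtaining
\[
(x_4+u_4)\cdot_\mu[x_1+u_1,x_2+u_2,x_3+u_3]_\rho = x_4\cdot[x_1,x_2,x_3] + \mu(x_4)\big(\rho(x_1,x_2)u_3-\rho(x_1,x_3)u_2+\rho(x_2,x_3)u_1\big)+\mu([x_1,x_2,x_3])u_4.
\]
The $B$-component $x_4\cdot[x_1,x_2,x_3]$ vanishes automatically because $B$ is admissible in the sense of \eqref{eq:trans-poisson-and-poisson}. Since $u_1,u_2,u_3,u_4$ range independently over $V$, the vanishing of the $V$-component is equivalent to $\mu(x)\rho(y,z)=0$ and $\mu([x,y,z])=0$ for all $x,y,z\in B$.

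Next I would expand the second identity in the same way, using $(x_4+u_4)\cdot_\mu(x_1+u_1)=x_4\cdot x_1+\mu(x_4)u_1+\mu(x_1)u_4$; this produces a $B$-component $[x_4\cdot x_1,x_2,x_3]$ that again vanishes by admissibility of $B$, together with a $V$-component whose coefficients of the independent vectors $u_1,u_2,u_3,u_4$ force $\rho(y,z)\mu(x)=0$ and $\rho(x\cdot y,z)=0$. Collecting the four extracted identities yields precisely \eqref{eq:trans-poisson-and-poisson-rep-prop}, so both directions of the equivalence follow simultaneously from this coefficient-wise matching. I do not anticipate a genuine obstacle here: the entire content lies in the bookkeeping of matching the coefficients of the four independent arguments $u_1,\dots,u_4$ and in observing that the $B$-components are killed by admissibility of $B$, so that the representation conditions \eqref{eq:trans-poisson-and-poisson-rep-prop} are exactly what remains.
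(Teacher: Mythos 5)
Your proof is correct, but it takes a different route from the paper's. The paper disposes of this proposition in one line, citing Propositions \ref{pro:possion-semi-direct} and \ref{pro:trans-poisson-semi-direct}: since an admissible structure is exactly a structure that is simultaneously Poisson 3-Lie and transposed Poisson 3-Lie (Proposition \ref{pro:both-trans-poisson-and-poisson}), and a representation in the admissible sense is exactly a simultaneous representation of both (the proposition opening Section \ref{sec:ad-poisson}), the semi-direct product is admissible if and only if it is both a Poisson and a transposed Poisson semi-direct product, which by the two cited propositions happens if and only if $(V,\rho,\mu)$ is a representation of both structures. You instead verify the admissibility identities $u\cdot_\mu[\cdot,\cdot,\cdot]_\rho=0$ and $[u\cdot_\mu x,\cdot,\cdot]_\rho=0$ on $B\oplus V$ directly: after reducing to the standard facts that $\cdot_\mu$ is commutative associative iff $(V,\mu)$ is a representation and $[\cdot,\cdot,\cdot]_\rho$ is 3-Lie iff $(V,\rho)$ is one, you expand the two mixed products, observe that the $B$-components die by admissibility of $B$, and match coefficients of the independent vectors $u_1,\dots,u_4$ to extract exactly the four vanishing conditions $\mu(x)\rho(y,z)=\rho(y,z)\mu(x)=\rho(x\cdot y,z)=\mu([x,y,z])=0$ of \eqref{eq:trans-poisson-and-poisson-rep-prop}; your expansions and the coefficient extraction are accurate in both directions. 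Your argument is more self-contained and in fact simpler than the corresponding computations it bypasses, since the admissibility identities require everything to vanish rather than satisfy a Leibniz-type rule, whereas the paper's route buys brevity by reusing the two earlier semi-direct product theorems at the cost of implicitly invoking the ``both structures'' characterizations for algebras and for representations.
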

	\begin{proof}
		It follows from Proposition \ref{pro:possion-semi-direct} and Proposition \ref{pro:trans-poisson-semi-direct}.
	\end{proof}
	\subsection{Matched pairs of admissible transposed Poisson 3-Lie algebras}
	\begin{defi} \label{def:admiss-trans-poisson-matched-pair}
		Let $(A,\cdot_A,[\cdot,\cdot,\cdot]_A)$ and $(B,\cdot_B,[\cdot,\cdot,\cdot]_B)$ be two admissible transposed Poisson 3-Lie algebras. Let $\mu_A:A\rightarrow \mathfrak{gl}(B)$, $\rho_A:\otimes^2 A\rightarrow \mathfrak{gl}(B)$ and $\mu_B:B\rightarrow \mathfrak{gl}(A)$, $\rho_B:\otimes^2 B\rightarrow \mathfrak{gl}(A)$ be linear maps such that $(A,B,\mu_A,\mu_B)$ is a matched pair of commutative associative algebra and $(A,B,\rho_A,\rho_B)$ is a matched pair of 3-Lie algebra. If $(B,\rho_A,\mu_A)$ is a representation of $(A,\cdot_A,[\cdot,\cdot,\cdot]_A)$ and $(A,\rho_B,\mu_B)$ is a representation of $(B,\cdot_B,[\cdot,\cdot,\cdot]_B)$ and for all $x_i\in A, y_i\in B, 1\leq i\leq 4$, the following equations hold:
		\begin{eqnarray}
			&&\mu_B(y_4)[x_1,x_2,x_3]_A=\rho_A(x_1,x_2)(y_4\cdot_B y_3)=0,\label{eq:admiss-match-pair-1}\\
			&&\mu_A(x_4)[y_1,y_2,y_3]_B=\rho_B(y_1,y_2)(x_4\cdot_A x_3)=0,\label{eq:admiss-match-pair-2}\\ 
			&&\mu_B\big(\rho_A(x_2,x_3)y_1\big)x_4=0,\label{eq:admiss-match-pair-3}\\ 
			&&\mu_A\big(\rho_B(y_2,y_3)x_1\big)y_4=0,\label{eq:admiss-match-pair-4}\\ 
			&&[\mu_B(y_1)x_4,x_2,x_3]_A=0,\label{eq:admiss-match-pair-5}\\ 
			&&[\mu_A(x_1)y_4,y_2,y_3]_B=0,\label{eq:admiss-match-pair-6}\\ 
			&&\rho_A\big(\mu_B(y_4)x_1,x_2\big)y_3+\rho_B\big(y_3,\mu_A(x_1)y_4\big)x_2=0,\label{eq:admiss-match-pair-7}	
		\end{eqnarray}
		then $(A,B,\rho_A,\mu_A,\rho_B,\mu_B)$ is called a \textbf{matched pair of admissible transposed Poisson 3-Lie algebras}.
	\end{defi}
	
	\begin{thm}
		Let $(A,\cdot_A,[\cdot,\cdot,\cdot]_A)$ and $(B,\cdot_B,[\cdot,\cdot,\cdot]_B)$ be two admissible transposed Poisson 3-Lie algebras. Suppose that $\mu_A:A\rightarrow \mathfrak{gl}(B)$, $\rho_A:\otimes^2 A\rightarrow \mathfrak{gl}(B)$ and $\mu_B:B\rightarrow \mathfrak{gl}(A)$, $\rho_B:\otimes^2 B\rightarrow \mathfrak{gl}(A)$ are linear maps. Define two operations $\cdot$ and $[\cdot,\cdot,\cdot]$ on $A\oplus B$ by \eqref{com-asso-sum} and \eqref{3-lie-sum} respectively. Then $(A\oplus B,\cdot,[\cdot,\cdot,\cdot])$ is an  admissible transposed Poisson 3-Lie algebra if and only if $(A,B,\rho_A,\mu_A,\rho_B,\mu_B)$ is a matched pair of  admissible transposed Poisson 3-Lie algebras. 
	\end{thm}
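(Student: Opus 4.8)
The plan is to follow the template already used for Theorem \ref{matched-pair} and its transposed counterpart, but now verifying the defining identity of an \emph{admissible} transposed Poisson 3-Lie algebra. By Definition \ref{ad-Tran} (equivalently Proposition \ref{pro:both-trans-poisson-and-poisson}), admissibility on $A\oplus B$ amounts to the two vanishing conditions $u\cdot[x,y,z]=0$ and $[u\cdot x,y,z]=0$ for all elements. Since $(A,B,\mu_A,\mu_B)$ is assumed to be a matched pair of commutative associative algebras and $(A,B,\rho_A,\rho_B)$ a matched pair of $3$-Lie algebras, Proposition \ref{pro:direct-comm-asso} and Proposition \ref{pro:direct-3-Lie} already guarantee that $(A\oplus B,\cdot)$ is a commutative associative algebra and $(A\oplus B,[\cdot,\cdot,\cdot])$ is a $3$-Lie algebra. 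Hence the entire problem reduces to expanding the two mixed-product identities and showing that their vanishing is equivalent to the seven conditions \eqref{eq:admiss-match-pair-1}--\eqref{eq:admiss-match-pair-7}.

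First I would substitute \eqref{com-asso-sum} and \eqref{3-lie-sum} into $(x_4+y_4)\cdot[x_1+y_1,x_2+y_2,x_3+y_3]$ and into $[(x_4+y_4)\cdot(x_1+y_1),x_2+y_2,x_3+y_3]$, and separate each resulting expression into its $A$-component and its $B$-component. The crucial simplification is that $A$ and $B$ are themselves admissible, so every purely internal term of the form $x\cdot_A[x',x'',x''']_A$ or $[x\cdot_A x',x'',x''']_A$ (and the $B$-analogues) vanishes; moreover, because $(B,\rho_A,\mu_A)$ and $(A,\rho_B,\mu_B)$ are representations of admissible algebras, the representation identity \eqref{eq:trans-poisson-and-poisson-rep-prop} forces every composite $\mu\rho$, $\rho\mu$, $\rho(\,\cdot\,\cdot,\cdot)$ and $\mu([\cdot,\cdot,\cdot])$ occurring in the expansion to be zero. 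After these cancellations only a short list of mixed terms survives in each component, and by multilinearity---specializing the free variables in $A$ and $B$ to isolate individual monomials---the vanishing of these surviving terms is equivalent, component by component, to \eqref{eq:admiss-match-pair-1}--\eqref{eq:admiss-match-pair-7}. Both implications then follow simultaneously, since the expansion is an exact equivalence rather than a one-way estimate.

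The main obstacle will be the careful accounting of the cross terms that couple the multiplication of one factor with the representation of the other, typically expressions such as $x\cdot_A\big(\rho_B(y,y')x'\big)$ and $\mu_B\big(\rho_A(x,x')y\big)x''$ in the $A$-component, together with their mirror images in the $B$-component. These are precisely the terms \emph{not} annihilated by \eqref{eq:trans-poisson-and-poisson-rep-prop} alone, and confirming that they regroup exactly into \eqref{eq:admiss-match-pair-1}--\eqref{eq:admiss-match-pair-7} (using the skew-symmetry of $\rho_A,\rho_B$ and, where a product and an action must be interchanged, the compatibility relations already built into the matched pairs of commutative associative and $3$-Lie algebras) is the delicate book-keeping step. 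A convenient alternative that avoids part of this labor is to invoke Proposition \ref{pro:both-trans-poisson-and-poisson} with Theorem \ref{matched-pair}: $A\oplus B$ is admissible if and only if it is simultaneously a Poisson and a transposed Poisson $3$-Lie algebra, so one may instead show that, under \eqref{eq:trans-poisson-and-poisson-rep-prop}, the union of the Poisson matched-pair conditions \eqref{cond-1}--\eqref{cond-8} and the transposed matched-pair conditions \eqref{match-cond-1}--\eqref{match-cond-6} collapses to the seven conditions above.
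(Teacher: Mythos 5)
Your proposal is correct and takes essentially the same route as the paper: the paper likewise reduces, via Propositions \ref{pro:direct-comm-asso} and \ref{pro:direct-3-Lie}, to the two vanishing identities $(x_4+y_4)\cdot[x_1+y_1,x_2+y_2,x_3+y_3]=[(x_4+y_4)\cdot(x_1+y_1),x_2+y_2,x_3+y_3]=0$, expands both by \eqref{com-asso-sum} and \eqref{3-lie-sum}, annihilates the purely internal terms by admissibility \eqref{eq:trans-poisson-and-poisson} and all composite terms of the form $\mu\rho$, $\rho\mu$, $\rho(\cdot\,\cdot,\cdot)$, $\mu([\cdot,\cdot,\cdot])$ by \eqref{eq:trans-poisson-and-poisson-rep-prop}, and then matches the surviving cross terms one-to-one with \eqref{eq:admiss-match-pair-1}--\eqref{eq:admiss-match-pair-7}. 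Your alternative shortcut (invoking Proposition \ref{pro:both-trans-poisson-and-poisson} together with the Poisson and transposed matched-pair theorems) is sound but is not the path the paper follows.
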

	\begin{proof} 
		By Proposition \ref{pro:direct-3-Lie} and Proposition \ref{pro:direct-comm-asso},  $(A\oplus B,\cdot,[\cdot,\cdot,\cdot])$ is a admissible transposed Poisson 3-Lie algebra if and only if for all $x_i\in A, y_i\in B, 1\leq i\leq 4$,
		\begin{eqnarray*}
			&&\underbrace{(x_4+y_4)\cdot[x_1+y_1,x_2+y_2,x_3+y_3]}_{(C1)}=\underbrace{[(x_4+y_4)\cdot(x_1+y_1),x_2+y_2,x_3+y_3]}_{(C2)}=0.
		\end{eqnarray*}	
		By \eqref{com-asso-sum} and \eqref{3-lie-sum}, we have
		\begin{eqnarray*}
			(C1)&=&(x_4+y_4)\cdot\big([x_1,x_2,x_3]_A+\rho_B(y_1,y_2)x_3+\rho_B(y_3,y_1)x_2+\rho_B(y_2,y_3)x_1  \\
			&&+[y_1,y_2,y_3]_B+\rho_A(x_1,x_2)y_3+\rho_A(x_3,x_1)y_2+\rho_A(x_2,x_3)y_1\big)\\
			&=&\underbrace{x_4\cdot_A[x_1,x_2,x_3]_A}_{(a1)}+\underbrace{y_4\cdot_B[y_1,y_2,y_3]_B}_{(a2)}+\underbrace{\mu_B(y_4)[x_1,x_2,x_3]_A}_{(c1)}\\
			&&+\underbrace{\mu_B(y_4)\big(\rho_B(y_1,y_2)x_3\big)}_{(b1)}+\underbrace{\mu_B(y_4)\big(\rho_B(y_3,y_1)x_2\big)}_{(b2)}+\underbrace{\mu_B(y_4)\big(\rho_B(y_2,y_3)x_1\big)}_{(b3)}\\
			&&+\underbrace{\mu_B([y_1,y_2,y_3]_B)x_4}_{(b4)}+\underbrace{\mu_B\big(\rho_A(x_1,x_2)y_3\big)x_4}_{(e1)}+\underbrace{\mu_B\big(\rho_A(x_3,x_1)y_2\big)x_4}_{(e2)}\\
			&&+\underbrace{\mu_B\big(\rho_A(x_2,x_3)y_1\big)x_4}_{(e3)}+\underbrace{\mu_A(x_4)[y_1,y_2,y_3]_B}_{(d1)}+\underbrace{\mu_A(x_4)\big(\rho_A(x_1,x_2)y_3\big)}_{(b5)}\\
			&&+\underbrace{\mu_A(x_4)\big(\rho_A(x_3,x_1)y_2\big)}_{(b6)}+\underbrace{\mu_A(x_4)\big(\rho_A(x_2,x_3)y_1\big)}_{(b7)}+\underbrace{\mu_A([x_1,x_2,x_3]_A)y_4}_{(b8)}\\
			&&+\underbrace{\mu_A\big(\rho_B(y_1,y_2)x_3\big)y_4}_{(f1)}+\underbrace{\mu_A\big(\rho_B(y_3,y_1)x_2\big)y_4}_{(f2)}+\underbrace{\mu_A\big(\rho_B(y_2,y_3)x_1\big)y_4}_{(f3)},\\
			(C2)&=&[x_4\cdot_A x_1 + \mu_B(y_4)x_1+ \mu_B(y_1)x_4+y_4\cdot_B y_1+\mu_A(x_4)y_1+\mu_A(x_1)y_4,x_2+y_2,x_3+y_3]\\
			&=&\underbrace{[x_4\cdot_A x_1,x_2,x_3]_A}_{(a3)}+\underbrace{[\mu_B(y_4)x_1,x_2,x_3]_A}_{(g1)}+\underbrace{[\mu_B(y_1)x_4,x_2,x_3]_A}_{(g2)}\\
			&&+\underbrace{\rho_B(y_4\cdot_B y_1,y_2)x_3}_{(b9)}+\underbrace{\rho_B\big(\mu_A(x_4)y_1,y_2\big)x_3}_{(j1)}+\underbrace{\rho_B\big(\mu_A(x_1)y_4,y_2\big)x_3}_{(j3)}  \\
			&&+\underbrace{\rho_B(y_3,y_4\cdot_B y_1)x_2}_{(b10)}+\underbrace{\rho_B\big(y_3,\mu_A(x_4)y_1\big)x_2}_{(j5)}+\underbrace{\rho_B\big(y_3,\mu_A(x_1)y_4\big)x_2}_{(j7)}\\
			&&+\underbrace{\rho_B(y_2,y_3)(x_4\cdot_A x_1)}_{(d2)} + \underbrace{\rho_B(y_2,y_3)\mu_B(y_4)x_1}_{(b11)}+ \underbrace{\rho_B(y_2,y_3)\mu_B(y_1)x_4}_{(b12)}\\
			&&+\underbrace{[y_4\cdot y_1,y_2,y_3]_B}_{(a4)}+\underbrace{[\mu_A(x_4)y_1,y_2,y_3]_B}_{(h1)}+\underbrace{[\mu_A(x_1)y_4,y_2,y_3]_B}_{(h2)}\\
			&&+\underbrace{\rho_A(x_4\cdot_A x_1,x_2)y_3}_{(b13)}+\underbrace{\rho_A\big(\mu_B(y_4)x_1,x_2\big)y_3}_{(j8)}+\underbrace{\rho_A\big(\mu_B(y_1)x_4,x_2\big)y_3}_{(j6)}\\
			&&+\underbrace{\rho_A(x_3,x_4\cdot_A x_1)y_2}_{(b14)}+\underbrace{\rho_A\big(x_3,\mu_B(y_4)x_1\big)y_2}_{(j4)}+\underbrace{\rho_A\big(x_3,\mu_B(y_1)x_4\big)y_2}_{(j2)}\\
			&&+\underbrace{\rho_A(x_2,x_3)(y_4\cdot_B y_1)}_{(c2)}+\underbrace{\rho_A(x_2,x_3)\mu_A(x_4)y_1}_{(b15)}+\underbrace{\rho_A(x_2,x_3)\mu_A(x_1)y_4}_{(b16)}.
		\end{eqnarray*}
		By \eqref{eq:trans-poisson-and-poisson}, we have $$(a1)=(a2)=(a3)=(a4)=0.$$
		By \eqref{eq:trans-poisson-and-poisson-rep-prop}, we have 
		\begin{align*}
			&(b1)=(b2)=(b3)=(b4)=(b5)=(b6)=(b7)=(b8)=(b9)\\
			&=(b10)=(b11)=(b12)=(b13)=(b14)=(b15)=(b16)=0.
		\end{align*}
		Moreover, we have the following equivalences
		\begin{eqnarray*}
			(c1)=(c2)=0 &\Leftrightarrow& \eqref{eq:admiss-match-pair-1};\\
			(d1)=(d2)=0 &\Leftrightarrow& \eqref{eq:admiss-match-pair-2};\\
			(e1)=(e2)=(e3)=0 &\Leftrightarrow& \eqref{eq:admiss-match-pair-3};\\
			(f1)=(f2)=(f3)=0 &\Leftrightarrow& \eqref{eq:admiss-match-pair-4};\\
			(g1)=(g2)=0 &\Leftrightarrow& \eqref{eq:admiss-match-pair-5};\\
			(h1)=(h2)=0 &\Leftrightarrow& \eqref{eq:admiss-match-pair-6};\\
			(j1)+(j2)=(j3)+(j4)=(j5)+(j6)=(j7)+(j8)=0 &\Leftrightarrow& \eqref{eq:admiss-match-pair-7}.
		\end{eqnarray*}
		Therefore, $(A,B,\rho_A,\mu_A,\rho_B,\mu_B)$ is a matched pair of admissible transposed Poisson 3-Lie algebras if and only if $(A\oplus B,\cdot,[\cdot,\cdot,\cdot])$ is an admissible transposed  Poisson 3-Lie algebra. The proof is completed.
	\end{proof}
	\begin{cor} \label{cor:adm-m}
		Let $(A,B,\rho_A,\rho_B)$ be a matched pair of 3-Lie algebras and $(A,B,\mu_A,\mu_B)$ be a matched pair of commutative associative algebras. Then $(A,B,\rho_A,\mu_A,\rho_B,\mu_B)$ is  both  a matched pair of Poisson 3-Lie algebras and a matched pair of transposed Poisson 3-Lie algebras if and only if it is a matched pair of admissible transposed Poisson 3-Lie algebras.
	\end{cor}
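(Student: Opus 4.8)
The plan is to reduce everything to the structure carried by the direct sum $A \oplus B$, rather than to manipulate the defining identities directly. The point is that all three notions of matched pair appearing in the statement have already been characterized, by the corresponding direct-sum theorems of this section and of Section \ref{sec:double-Poisson}, as the condition that $A \oplus B$ (equipped with the multiplication \eqref{com-asso-sum} and the bracket \eqref{3-lie-sum}) carries the appropriate compatible structure. So I would first record that, since $(A,B,\rho_A,\rho_B)$ is a matched pair of $3$-Lie algebras and $(A,B,\mu_A,\mu_B)$ is a matched pair of commutative associative algebras, Proposition \ref{pro:direct-3-Lie} and Proposition \ref{pro:direct-comm-asso} make $A \oplus B$ simultaneously a $3$-Lie algebra and a commutative associative algebra under these two operations; this common underlying structure is what lets the three Poisson-type refinements be compared on equal footing.

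Next I would invoke the three equivalence theorems in parallel. By Theorem \ref{matched-pair}, the sextuple $(A,B,\rho_A,\mu_A,\rho_B,\mu_B)$ is a matched pair of Poisson $3$-Lie algebras if and only if $A\oplus B$ is a Poisson $3$-Lie algebra; by the theorem of Section \ref{sec:trans-Poisson} characterizing matched pairs of transposed Poisson $3$-Lie algebras, it is a matched pair of transposed Poisson $3$-Lie algebras if and only if $A \oplus B$ is a transposed Poisson $3$-Lie algebra; and by the theorem immediately preceding this corollary, it is a matched pair of admissible transposed Poisson $3$-Lie algebras if and only if $A \oplus B$ is an admissible transposed Poisson $3$-Lie algebra. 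Crucially, all three statements refer to the very same operations \eqref{com-asso-sum} and \eqref{3-lie-sum} on $A \oplus B$, so the three matched-pair hypotheses translate into three structural properties of one and the same algebra.

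The heart of the argument is then Proposition \ref{pro:both-trans-poisson-and-poisson} together with Definition \ref{ad-Tran}: an algebra that is simultaneously a commutative associative algebra and a $3$-Lie algebra is both a Poisson $3$-Lie algebra and a transposed Poisson $3$-Lie algebra precisely when it satisfies \eqref{eq:trans-poisson-and-poisson}, i.e.\ precisely when it is an admissible transposed Poisson $3$-Lie algebra. Applying this to $A \oplus B$ shows that ``$A \oplus B$ is both Poisson $3$-Lie and transposed Poisson $3$-Lie'' is equivalent to ``$A \oplus B$ is admissible transposed Poisson $3$-Lie,'' and transporting this equivalence back through the three theorems of the previous paragraph yields exactly the stated biconditional.

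The only genuine subtlety, and the step I would be most careful about, is consistency of the standing hypotheses on the factors $A$ and $B$ themselves: to even speak of a matched pair of Poisson (resp.\ transposed Poisson, resp.\ admissible transposed Poisson) $3$-Lie algebras, $A$ and $B$ must already carry that type of structure. Here one uses Proposition \ref{pro:both-trans-poisson-and-poisson} once more, now at the level of the factors: being both a Poisson $3$-Lie algebra and a transposed Poisson $3$-Lie algebra is the same as being an admissible transposed Poisson $3$-Lie algebra, so in both directions of the biconditional $A$ and $B$ are admissible transposed Poisson $3$-Lie algebras and every notion invoked is well posed. I do not expect any real computation; the work is entirely in verifying that these preconditions line up so that the three characterization theorems may be chained together.
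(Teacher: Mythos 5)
Your proof is correct and is essentially the paper's own argument: the corollary is stated without an explicit proof precisely because it follows by chaining the three direct-sum characterizations (Theorem~\ref{matched-pair} for Poisson, the analogous theorem of Section~\ref{sec:trans-Poisson} for transposed, and the theorem immediately preceding the corollary for admissible) with Proposition~\ref{pro:both-trans-poisson-and-poisson} applied to $A\oplus B$, all with respect to the same operations \eqref{com-asso-sum} and \eqref{3-lie-sum}, exactly as you do. Your final paragraph correctly isolates the one point of care, namely applying Proposition~\ref{pro:both-trans-poisson-and-poisson} again at the level of the factors $A$ and $B$ so that the standing hypotheses of the three theorems are satisfied in both directions.
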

	\subsection{Double construction admissible transposed  Poisson 3-Lie bialgebras}
	\begin{defi}
		An \textbf{admissible transposed Poisson 3-Lie coalgebra} is a triple $(A,\Delta,\delta)$, where $(A,\delta)$ is a 3-Lie coalgebra and $(A,\Delta)$ is a cocommutative coassociative coalgebra that satisfies
		\begin{equation}\label{eq:admissible-Poisson-coalg}
			(\Delta\otimes 1\otimes 1)\delta(x)=(1\otimes \delta)\Delta(x)=0,\quad\forall~ x\in A.
		\end{equation}
	\end{defi}
	\begin{pro}
		Let $A$ be a vector space, $(A,\delta)$ be a 3-Lie coalgebra and $(A,\Delta)$ be a cocommutative coassociative coalgebra. Then $(A,\Delta,\delta)$ is an admissible transposed Poisson 3-Lie coalgebra if and only if $(A^*,\Delta^*,\delta^*)$ is an admissible transposed Poisson 3-Lie algebra.
	\end{pro}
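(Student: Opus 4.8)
The plan is to mirror the duality argument already carried out in the proof of Proposition \ref{pro:poisson-3-lie-coal}, replacing the Poisson-type compatibility \eqref{Poisson-coalg} by the admissibility conditions \eqref{eq:admissible-Poisson-coalg}. First I would invoke the two structural dualities that are independent of the mixed conditions: by Proposition \ref{pro:commu-asso-coal} applied to $\Delta$, the hypothesis that $(A,\Delta)$ is a cocommutative coassociative coalgebra is equivalent to $(A^*,\Delta^*)$ being a commutative associative algebra; and by Proposition \ref{pro:3-Lie-coal} applied to $\delta$, the hypothesis that $(A,\delta)$ is a 3-Lie coalgebra (i.e.\ $\delta^*$ skew-symmetric and \eqref{eq:3-Lie-coal}) is equivalent to $(A^*,\delta^*)$ being a 3-Lie algebra. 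Consequently the only thing left to establish is that the two vanishing relations in \eqref{eq:admissible-Poisson-coalg} dualize exactly to the two vanishing relations in \eqref{eq:trans-poisson-and-poisson}.

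The heart of the argument is a pairing computation. Fixing $x\in A$ and $a^*,b^*,c^*,d^*\in A^*$, and using the convention \eqref{eq:action} together with the defining relations of $\Delta^*$ and $\delta^*$, I would verify the two identities
\[
\langle (\Delta\otimes 1\otimes 1)\delta(x),\, a^*\otimes b^*\otimes c^*\otimes d^*\rangle=\langle x,\, \delta^*\big(\Delta^*(a^*\otimes b^*)\otimes c^*\otimes d^*\big)\rangle,
\]
\[
\langle (1\otimes\delta)\Delta(x),\, a^*\otimes b^*\otimes c^*\otimes d^*\rangle=\langle x,\, \Delta^*\big(a^*\otimes \delta^*(b^*\otimes c^*\otimes d^*)\big)\rangle .
\]
These are obtained by the same leg-by-leg transpose of $\Delta$ and $\delta$ used in Proposition \ref{pro:poisson-3-lie-coal}, the point being that $(\Delta\otimes 1\otimes 1)\delta$ transposes to ``multiply the first two arguments, then bracket'' while $(1\otimes\delta)\Delta$ transposes to ``bracket the last three arguments, then multiply''.

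Since $x$ and the four covectors are arbitrary and the pairing is nondegenerate, the first identity shows that $(\Delta\otimes 1\otimes 1)\delta(x)=0$ for all $x$ is equivalent to $\delta^*\big(\Delta^*(a^*\otimes b^*)\otimes c^*\otimes d^*\big)=0$, namely $[u\cdot x,y,z]=0$ in $(A^*,\Delta^*,\delta^*)$; and the second identity shows that $(1\otimes\delta)\Delta(x)=0$ for all $x$ is equivalent to $\Delta^*\big(a^*\otimes \delta^*(b^*\otimes c^*\otimes d^*)\big)=0$, namely $u\cdot[x,y,z]=0$. Together these are precisely the two identities of \eqref{eq:trans-poisson-and-poisson}, so $(A,\Delta,\delta)$ satisfies \eqref{eq:admissible-Poisson-coalg} if and only if $(A^*,\Delta^*,\delta^*)$ satisfies \eqref{eq:trans-poisson-and-poisson}, completing the equivalence. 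The only delicate point is the tensor-leg bookkeeping in the two displayed pairing identities; once these are pinned down, the conclusion is immediate from nondegeneracy, and no separate check of coassociativity or of the 3-Lie compatibility is needed, as these are already furnished by Propositions \ref{pro:commu-asso-coal} and \ref{pro:3-Lie-coal}.
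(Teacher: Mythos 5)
Your proposal is correct and follows essentially the same route as the paper: it reduces the statement to the structural dualities of Propositions \ref{pro:commu-asso-coal} and \ref{pro:3-Lie-coal}, then establishes the identical two pairing identities $\langle (\Delta\otimes 1\otimes 1)\delta(x), a^*\otimes b^*\otimes c^*\otimes d^*\rangle=\langle x, \delta^*\big(\Delta^*(a^*\otimes b^*)\otimes c^*\otimes d^*\big)\rangle$ and $\langle (1\otimes \delta)\Delta(x), a^*\otimes b^*\otimes c^*\otimes d^*\rangle=\langle x, \Delta^*\big(a^*\otimes \delta^*(b^*\otimes c^*\otimes d^*)\big)\rangle$, concluding by nondegeneracy of the pairing that \eqref{eq:admissible-Poisson-coalg} holds if and only if \eqref{eq:trans-poisson-and-poisson} holds for $(A^*,\Delta^*,\delta^*)$. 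No gaps; this matches the paper's proof in both structure and detail.
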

	\begin{proof}
		By the proof of Proposition \ref{pro:poisson-3-lie-coal},  we only need to prove that $\Delta^*$ and $\delta^*$ satisfy \eqref{eq:trans-poisson-and-poisson}. For all $x\in A, a^*,b^*,c^*,d^* \in A^*$, we have
		\begin{eqnarray*}
			\langle (\Delta\otimes 1\otimes 1)\delta(x), a^*\otimes b^*\otimes c^*\otimes d^* \rangle = \langle x, \delta^*\big(\Delta^*(a^*\otimes b^*)\otimes c^*\otimes d^*\big) \rangle,
		\end{eqnarray*}
		and
		\begin{eqnarray*}
			\langle (1\otimes \delta)\Delta(x), a^*\otimes b^*\otimes c^*\otimes d^* \rangle =
			\langle x, \Delta^*\big(a^*\otimes \delta^*(b^*\otimes c^*\otimes d^*)\big)  \rangle.
		\end{eqnarray*}
		Thus \eqref{eq:admissible-Poisson-coalg} holds if and only if
		\begin{eqnarray*}
			\delta^*\big(\Delta^*(a^*\otimes b^*)\otimes c^*\otimes d^*\big)=\Delta^*\big(a^*\otimes \delta^*(b^*\otimes c^*\otimes d^*)\big) =0.
		\end{eqnarray*}
		Therefore, $(A,\Delta,\delta)$ is an admissible transposed Poisson 3-Lie coalgebra if and only if $(A^*,\Delta^*,\delta^*)$ is an admissible transposed Poisson 3-Lie algebra. The proof is completed.
	\end{proof}
	\begin{defi}
		Let $(A,\cdot, [\cdot,\cdot,\cdot])$ be an admissible transposed  Poisson 3-Lie algebra and $(A,\Delta,\delta)$ be an admissible transposed  Poisson 3-Lie coalgebra. If $(A,[\cdot,\cdot,\cdot],\delta)$ is a double construction 3-Lie bialgebra, $(A,\cdot,\Delta)$ is a commutative and cocommutative infinitesimal bialgebra satisfying:
		\begin{eqnarray}
			&&\Delta([x,y,z])=0,\label{eq:admi-bialg-1}\\
			&&\delta(x\cdot y)=0,\label{eq:admi-bialg-2}\\
			&&( \mathrm{ad}_{y,z}\otimes 1)\Delta(x)=0,\label{eq:admi-bialg-3}\\
			&&\big(1\otimes1\otimes\mathcal{L}(y)\big)\delta(x)=0,\label{eq:admi-bialg-4}\\
			&&( 1\otimes\mathrm{ad}_{y,z} )\Delta(x)=0,\label{eq:admi-bialg-5}\\
			&&\big(\mathcal{L}(y)\otimes1\otimes1\big)\delta(x)=0, \quad\forall~ x,y,z\in A, \label{eq:admi-bialg-6}
		\end{eqnarray}
		then we call $(A, \cdot,[\cdot,\cdot,\cdot],\Delta,\delta)$ a \textbf{double construction admissible transposed  Poisson 3-Lie bialgebra}.
	\end{defi}
	Using the same method in Lemma \ref{equival} we have the following conclusion.
	\begin{lem}\label{lam:admis-matched-pair}
		Let $(A,\cdot, [\cdot,\cdot,\cdot])$ and $(A^*,\circ,[\cdot,\cdot,\cdot]^*)$ be two admissible transposed  Poisson 3-Lie algebras. Let $(A^*,\mathrm{ad}^*,-\mathcal{L}^*)$ and $(A,\mathfrak{ad}^*,-L^*)$ be the coadjoint representation of $(A,\cdot,[\cdot,\cdot,\cdot])$ and $(A^*,\circ,[\cdot,\cdot,\cdot]^*)$, respectively. Then $(A, A^*, \mathrm{ad}^*,-\mathcal{L}^*,\mathfrak{ad}^*,-L^*)$ is a matched pair of admissible transposed  Poisson 3-Lie algebras if and only if \eqref{eq:admiss-match-pair-1} $\sim$ \eqref{eq:admiss-match-pair-6} hold for $\rho_A=\mathrm{ad}^*,\mu_A=-\mathcal{L}^*,\rho_B=\mathfrak{ad}^*,\mu_B=-L^*$.
	\end{lem}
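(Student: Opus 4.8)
The plan is to follow verbatim the structure-constant translation carried out in the proof of Lemma \ref{equival}. By Definition \ref{def:admiss-trans-poisson-matched-pair}, the tuple $(A,A^*,\mathrm{ad}^*,-\mathcal{L}^*,\mathfrak{ad}^*,-L^*)$ is a matched pair of admissible transposed Poisson 3-Lie algebras precisely when the representation axioms hold together with \eqref{eq:admiss-match-pair-1}--\eqref{eq:admiss-match-pair-7}. The representation axioms are automatic here, since $(A^*,\mathrm{ad}^*,-\mathcal{L}^*)$ and $(A,\mathfrak{ad}^*,-L^*)$ are the coadjoint representations of admissible transposed Poisson 3-Lie algebras, and the matched-pair-of-commutative-associative and matched-pair-of-3-Lie data are exactly as in Lemma \ref{equival}. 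Hence the only content beyond the definition is that, under the coadjoint choice, condition \eqref{eq:admiss-match-pair-7} is redundant; so I would reduce the whole statement to proving the single equivalence between \eqref{eq:admiss-match-pair-7} and the conjunction of \eqref{eq:admiss-match-pair-5} and \eqref{eq:admiss-match-pair-6}.

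First I would record the splitting observation. For $x_1,x_2\in A$ and $y_3,y_4\in A^*$, the summand $\rho_A(\mu_B(y_4)x_1,x_2)y_3$ of \eqref{eq:admiss-match-pair-7} lies in $A^*=B$, whereas the summand $\rho_B(y_3,\mu_A(x_1)y_4)x_2$ lies in $A$. Since $A$ and $A^*$ are complementary subspaces of $A\oplus A^*$, equation \eqref{eq:admiss-match-pair-7} is equivalent to the separate vanishing
\[
\rho_A(\mu_B(y_4)x_1,x_2)y_3=0\quad\text{and}\quad \rho_B(y_3,\mu_A(x_1)y_4)x_2=0.
\]

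Next I would fix the basis $\{e_i\}$ of $A$, its dual basis $\{e_i^*\}$, the structure constants $a_{ij}^l,b_{ij}^l,c_{ijk}^l,d_{ijk}^l$, and the coadjoint-action formulas exactly as set up in the proof of Lemma \ref{equival}. Expanding the first vanishing condition gives $\sum_m b_{sm}^p c_{mql}^r=0$, which, after renaming free and summed indices, is exactly the structure-constant form of \eqref{eq:admiss-match-pair-5} obtained by expanding $[\mu_B(e_i^*)e_p,e_q,e_r]_A=0$ into $\sum_l b_{il}^p c_{lqr}^m=0$. Expanding the second gives $\sum_m a_{pm}^s d_{rml}^q=0$; here I would invoke the skew-symmetry of the ternary bracket on $A^*$, i.e. $d_{rml}^q=-d_{mrl}^q$, to rewrite this as $\sum_m a_{pm}^s d_{mrl}^q=0$, which is the structure-constant form of \eqref{eq:admiss-match-pair-6} obtained by expanding $[\mu_A(e_p)e_s^*,e_t^*,e_u^*]^*=0$ into $\sum_l a_{pl}^s d_{ltu}^m=0$. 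These two identifications yield the desired equivalence, so the full matched-pair conditions \eqref{eq:admiss-match-pair-1}--\eqref{eq:admiss-match-pair-7} collapse to \eqref{eq:admiss-match-pair-1}--\eqref{eq:admiss-match-pair-6}.

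I expect the main obstacle to be purely bookkeeping: tracking free versus summed indices through the sign conventions $\mu_A=-\mathcal{L}^*$, $\mu_B=-L^*$, $\rho_A=\mathrm{ad}^*$, $\rho_B=\mathfrak{ad}^*$, and, at the decisive step, correctly using the skew-symmetry of the 3-Lie structure constants $d$ to realign the $A$-valued summand of \eqref{eq:admiss-match-pair-7} with \eqref{eq:admiss-match-pair-6}, since the summed index sits in a different slot of $d$ before this swap. A sign or slot error at that point would break the index match, even though each side is ultimately a vanishing identity to which the overall sign is irrelevant.
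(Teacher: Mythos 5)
Your proposal is correct and takes essentially the same route as the paper: the paper's entire proof is the remark that Lemma \ref{lam:admis-matched-pair} follows ``using the same method in Lemma \ref{equival}'', i.e.\ precisely the structure-constant translation you carry out, splitting \eqref{eq:admiss-match-pair-7} into its $A$-valued and $A^*$-valued summands and identifying them, via the coadjoint formulas and the skew-symmetry of the constants $d_{ijk}^l$, with the structure-constant forms of \eqref{eq:admiss-match-pair-5} and \eqref{eq:admiss-match-pair-6}. Your index computations are correct (the sign discrepancies are indeed immaterial since every identity is a vanishing condition), and your treatment of the representation axioms as automatic for the coadjoint representations matches the paper's.
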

	\begin{thm} \label{thm:adm-m-d}
		Let $(A,\cdot,[\cdot,\cdot,\cdot])$ be an admissible transposed Poisson 3-Lie algebra and  $(A,\Delta,\delta)$ be an  admissible transposed Poisson 3-Lie coalgebra.  Then $(A,A^*,\mathrm{ad}^*,-\mathcal{L}^*,\mathfrak{ad}^*,-L^*)$ is a matched pair of $(A,\cdot,[\cdot,\cdot,\cdot])$ if and only if $(A,\cdot, [\cdot,\cdot,\cdot],\Delta,\delta)$ is a double construction admissible transposed  Poisson 3-Lie bialgebra.
	\end{thm}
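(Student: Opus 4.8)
The plan is to follow the template of the proof of Theorem~\ref{thm:one-one-co}, splitting the admissible transposed Poisson 3-Lie bialgebra into its 3-Lie half and its commutative associative half, handling each by a known correspondence, and then matching the remaining mixed compatibility conditions against the matched pair axioms via structure constants. First I would invoke the two underlying equivalences: by \cite[Theorem 4.14]{Bai2019}, $(A,[\cdot,\cdot,\cdot],\delta)$ is a double construction 3-Lie bialgebra if and only if $(A,A^*,\mathrm{ad}^*,\mathfrak{ad}^*)$ is a matched pair of 3-Lie algebras, and by \cite[Corollary 2.2.6]{Bai2010}, $(A,\cdot,\Delta)$ is a commutative and cocommutative infinitesimal bialgebra if and only if $(A,A^*,-\mathcal{L}^*,-L^*)$ is a matched pair of commutative associative algebras. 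Since both sides of the asserted equivalence demand exactly these two underlying matched pair structures on the $(\mathrm{ad}^*,\mathfrak{ad}^*)$ and $(-\mathcal{L}^*,-L^*)$ data, the problem reduces to matching the mixed axioms.

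Next I would apply Lemma~\ref{lam:admis-matched-pair}, which tells us that $(A,A^*,\mathrm{ad}^*,-\mathcal{L}^*,\mathfrak{ad}^*,-L^*)$ is a matched pair of admissible transposed Poisson 3-Lie algebras precisely when \eqref{eq:admiss-match-pair-1}--\eqref{eq:admiss-match-pair-6} hold for $\rho_A=\mathrm{ad}^*,\mu_A=-\mathcal{L}^*,\rho_B=\mathfrak{ad}^*,\mu_B=-L^*$. The heart of the proof is then to establish the six equivalences \eqref{eq:admiss-match-pair-1}$\Leftrightarrow$\eqref{eq:admi-bialg-1}, \eqref{eq:admiss-match-pair-2}$\Leftrightarrow$\eqref{eq:admi-bialg-2}, \eqref{eq:admiss-match-pair-3}$\Leftrightarrow$\eqref{eq:admi-bialg-5}, \eqref{eq:admiss-match-pair-4}$\Leftrightarrow$\eqref{eq:admi-bialg-4}, \eqref{eq:admiss-match-pair-5}$\Leftrightarrow$\eqref{eq:admi-bialg-3}, and \eqref{eq:admiss-match-pair-6}$\Leftrightarrow$\eqref{eq:admi-bialg-6}.

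I would carry out each of these exactly as in Lemma~\ref{equival}: fix a basis $\{e_1,\dots,e_n\}$ of $A$ with dual basis $\{e_1^*,\dots,e_n^*\}$, write the structure constants $a_{ij}^l$ for $\cdot$, $b_{ij}^l$ for $\circ$ (equivalently $\Delta$), $c_{ijk}^l$ for $[\cdot,\cdot,\cdot]$, $d_{ijk}^l$ for $[\cdot,\cdot,\cdot]^*$ (equivalently $\delta$), and expand both sides of each coadjoint identity into these constants using the formulas for $\mathrm{ad}^*,\mathfrak{ad}^*,\mathcal{L}^*,L^*,\Delta,\delta$ recorded in the proof of Lemma~\ref{equival}. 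For instance, evaluating $\mu_B(y_4)[x_1,x_2,x_3]_A=0$ on basis elements yields the scalar equations $\sum_l c_{ijk}^l b_{\alpha m}^l=0$, which are exactly the coefficient relations produced by $\Delta([e_i,e_j,e_k])=0$; this gives \eqref{eq:admiss-match-pair-1}$\Leftrightarrow$\eqref{eq:admi-bialg-1}, and the second equality of \eqref{eq:admiss-match-pair-1} reduces to the same scalar system. The remaining five pairs are handled by identical expansions.

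The hard part will be the index bookkeeping in these dualizations, and in particular the fact that the pairing is not diagonal: conditions \eqref{eq:admiss-match-pair-3} and \eqref{eq:admiss-match-pair-5} correspond to \eqref{eq:admi-bialg-5} and \eqref{eq:admi-bialg-3} respectively, and the correct identification only emerges after using the cocommutativity of $\Delta$ (the symmetry $b_{ij}^l=b_{ji}^l$), the commutativity of $\cdot$ (the symmetry $a_{ij}^l=a_{ji}^l$), and the skew-symmetry and cyclicity of the brackets (for $c$ and $d$). Once all six correspondences are verified, combining them with the two underlying matched pair equivalences above shows that $(A,A^*,\mathrm{ad}^*,-\mathcal{L}^*,\mathfrak{ad}^*,-L^*)$ is a matched pair of admissible transposed Poisson 3-Lie algebras if and only if $(A,\cdot,[\cdot,\cdot,\cdot],\Delta,\delta)$ is a double construction admissible transposed Poisson 3-Lie bialgebra, which completes the proof.
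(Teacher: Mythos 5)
Your proposal is correct and follows essentially the same route as the paper: the paper's proof likewise reduces the statement via Lemma~\ref{lam:admis-matched-pair} and verifies the six coefficient-level equivalences using the structure constants from the proof of Lemma~\ref{equival}, with the underlying correspondences from \cite{Bai2019} and \cite{Bai2010} for the 3-Lie and associative halves taken as in Theorem~\ref{thm:one-one-co}. The only (cosmetic) discrepancy is your crossed pairing $\eqref{eq:admiss-match-pair-3}\Leftrightarrow\eqref{eq:admi-bialg-5}$ and $\eqref{eq:admiss-match-pair-5}\Leftrightarrow\eqref{eq:admi-bialg-3}$: the paper pairs these diagonally, and the diagonal identification indeed holds directly at the level of structure constants (needing only the cyclic symmetry of the 3-brackets, e.g.\ both $\eqref{eq:admiss-match-pair-3}$ and $\eqref{eq:admi-bialg-3}$ reduce to $\sum_{l}c_{ijl}^{p}b_{lm}^{q}=0$), while your version additionally uses cocommutativity of $\Delta$, under which all four of these conditions become pairwise equivalent, so both bookkeepings yield the theorem.
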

	\begin{proof}
		Using the structure constants in the proof of Lemma \ref{equival}, we can prove that 
		$$\eqref{eq:admiss-match-pair-1} \Leftrightarrow \eqref{eq:admi-bialg-1},~ \eqref{eq:admiss-match-pair-2} \Leftrightarrow \eqref{eq:admi-bialg-2},~ \eqref{eq:admiss-match-pair-3} \Leftrightarrow \eqref{eq:admi-bialg-3},~ \eqref{eq:admiss-match-pair-4} \Leftrightarrow \eqref{eq:admi-bialg-4},~ \eqref{eq:admiss-match-pair-5} \Leftrightarrow \eqref{eq:admi-bialg-5},~ \eqref{eq:admiss-match-pair-6} \Leftrightarrow \eqref{eq:admi-bialg-6}.$$ By Lemma \ref{lam:admis-matched-pair}, $(A,\cdot, [\cdot,\cdot,\cdot],\Delta,\delta)$ is a double construction  admissible transposed Poisson 3-Lie bialgebra if and only if  $(A,A^*,\mathrm{ad}^*,-\mathcal{L}^*,\mathfrak{ad}^*,-L^*)$ is a matched pair of Poisson 3-Lie algebras. The proof is completed. 
	\end{proof}
	\begin{cor} \label{cor:ad-d-bi}
		A double construction  admissible transposed Poisson 3-Lie bialgebra is  a double construction  Poisson 3-Lie bialgebra.
	\end{cor}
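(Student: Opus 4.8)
The plan is to obtain the statement by chaining the equivalences already proved, rather than re-checking the defining axioms of a double construction Poisson 3-Lie bialgebra from scratch. Let $(A,\cdot,[\cdot,\cdot,\cdot],\Delta,\delta)$ be a double construction admissible transposed Poisson 3-Lie bialgebra. First I would record the two structural facts that make the machinery of Section \ref{sec:double-Poisson} applicable. On the algebra side, Definition \ref{ad-Tran} together with Proposition \ref{pro:both-trans-poisson-and-poisson} shows that $(A,\cdot,[\cdot,\cdot,\cdot])$ is in particular a Poisson 3-Lie algebra, since \eqref{eq:trans-poisson-and-poisson} is exactly the admissibility condition. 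On the coalgebra side, the admissible transposed Poisson 3-Lie coalgebra $(A,\Delta,\delta)$ is a Poisson 3-Lie coalgebra: condition \eqref{eq:admissible-Poisson-coalg} forces both $(\Delta\otimes 1\otimes 1)\delta(x)=0$ and $(1\otimes\delta)\Delta(x)=0$, hence also $(\tau_{12}\otimes 1\otimes 1)(1\otimes\delta)\Delta(x)=0$, so every term of \eqref{Poisson-coalg} vanishes and that identity holds trivially.

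With these in place I would run the diagram chase indicated in the introduction. By Theorem \ref{thm:adm-m-d} the hypothesis is equivalent to $(A,A^*,\mathrm{ad}^*,-\mathcal{L}^*,\mathfrak{ad}^*,-L^*)$ being a matched pair of admissible transposed Poisson 3-Lie algebras; note here that the dual $(A^*,\circ,[\cdot,\cdot,\cdot]^*)=(A^*,\Delta^*,\delta^*)$ is itself an admissible transposed Poisson 3-Lie algebra, hence Poisson, by the duality proposition for admissible transposed Poisson 3-Lie coalgebras. Such a matched pair carries, by Definition \ref{def:admiss-trans-poisson-matched-pair}, a matched pair $(A,A^*,-\mathcal{L}^*,-L^*)$ of commutative associative algebras and a matched pair $(A,A^*,\mathrm{ad}^*,\mathfrak{ad}^*)$ of 3-Lie algebras, so the hypotheses of Corollary \ref{cor:adm-m} are met and it upgrades to a matched pair of Poisson 3-Lie algebras. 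Finally, since $A$ is a Poisson 3-Lie algebra and $(A,\Delta,\delta)$ is a Poisson 3-Lie coalgebra, the implication $(\mathrm{b})\Rightarrow(\mathrm{a})$ of Theorem \ref{thm:one-one-co} translates this matched pair of Poisson 3-Lie algebras into the assertion that $(A,\cdot,[\cdot,\cdot,\cdot],\Delta,\delta)$ is a double construction Poisson 3-Lie bialgebra, which is the desired conclusion.

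As a cross-check I would also verify the axioms directly. The double construction 3-Lie bialgebra condition and the commutative cocommutative infinitesimal bialgebra condition are shared verbatim by the two notions, so only the compatibilities \eqref{bialg-1}--\eqref{bialg-4} need attention, and each is an equality whose two sides both vanish under the admissible conditions: \eqref{eq:admi-bialg-1}, \eqref{eq:admi-bialg-3}, \eqref{eq:admi-bialg-5} annihilate \eqref{bialg-1}; \eqref{eq:admi-bialg-2} and \eqref{eq:admi-bialg-6} annihilate \eqref{bialg-2}; \eqref{eq:admi-bialg-4} and \eqref{eq:admi-bialg-6} annihilate \eqref{bialg-3}; and \eqref{eq:admi-bialg-5} annihilates \eqref{bialg-4}, so each reads $0=0$. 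No genuine obstacle arises, as the corollary merely expresses that the admissible axioms are strictly stronger than their Poisson counterparts; the only point I would treat carefully is confirming that the coadjoint data legitimately form matched pairs of the underlying commutative associative and 3-Lie structures, so that Corollary \ref{cor:adm-m} is invoked with all its hypotheses satisfied.
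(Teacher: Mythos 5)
Your proposal is correct and follows essentially the route the paper intends: the paper states the corollary without proof, but its introductory diagram chains exactly your argument, Theorem \ref{thm:adm-m-d} $\Rightarrow$ matched pair of admissible transposed Poisson 3-Lie algebras, Corollary \ref{cor:adm-m} $\Rightarrow$ matched pair of Poisson 3-Lie algebras, Theorem \ref{thm:one-one-co} $\Rightarrow$ double construction Poisson 3-Lie bialgebra. Your direct cross-check that \eqref{eq:admi-bialg-1}--\eqref{eq:admi-bialg-6} reduce \eqref{bialg-1}--\eqref{bialg-4} to $0=0$ (and that \eqref{eq:admissible-Poisson-coalg} trivializes \eqref{Poisson-coalg}) is also accurate and handles the one point the diagram chase leaves implicit.
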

	\begin{ex} \label{ex:no-bi}
		Let $A$  be a 4-dimensional vector space with a basis $\{e_1,e_2,e_3,e_4\}$.   
		If we	define the  nonzero operations $\cdot$ and $[\cdot,\cdot,\cdot]$ on $A$ by
		$$[e_2,e_3,e_4]=e_1, \quad e_2\cdot e_2=e_1,$$and define the linear maps $\delta: A \to A \otimes A \otimes A$ and $\Delta:A\rightarrow  A\otimes A$ by $$\delta(e_2) = e_1 \wedge e_3 \wedge e_4, \quad \Delta(e_2)=e_1\otimes e_1,$$ then $(A, \cdot,[\cdot,\cdot,\cdot],\Delta,\delta)$ is a double construction admissible transposed  Poisson 3-Lie bialgebra.
	\end{ex}
	\begin{proof}
		By \cite[Theorem 4.8]{C. Du}, we have that $(A,[\cdot,\cdot,\cdot],\delta)$ is a double construction 3-Lie bialgebra. Moreover, $(A,\cdot)$ is a commutative associative algebra and $\Delta$ satisfies \eqref{eq:com-asso-coal} and \eqref{eq:infi-bialg}, that is, $(A,\cdot,\Delta)$ is a commutative and cocommutative infinitesimal bialgebra. 
		
		Furthermore, we can prove that $(A,\cdot,[\cdot,\cdot,\cdot])$ is an admissible transposed  Poisson 3-Lie algebra, and $\Delta,\delta$ satisfy \eqref{eq:admissible-Poisson-coalg} and \eqref{eq:admi-bialg-1} $\sim$ \eqref{eq:admi-bialg-6}, that is, $(A, \cdot,[\cdot,\cdot,\cdot],\Delta,\delta)$ is a double construction  admissible transposed Poisson 3-Lie bialgebra. 
	\end{proof}
	\begin{rmk} 
		Let $\{e^*_1,e^*_2,e^*_3,e^*_4\}$ be the dual basis of $A$. By the above double construction admissible transposed  Poisson 3-Lie bialgebras, we can give the admissible transposed  Poisson 3-Lie algebra $(A^*, \circ,[\cdot,\cdot,\cdot]^*)$, where 
		$$[e^*_1,e^*_3,e^*_4]^*=e^*_2, \quad e_1^*\circ e_1^*=e_2^*.$$ 
		Therefore, $(A\oplus A^*,\cdot_{A\oplus A^*},[\cdot,\cdot,\cdot]_{A\oplus A^*})$ is an admissible transposed  Poisson 3-Lie algebra,  where  $\cdot_{A\oplus A^*}$ and $[\cdot,\cdot,\cdot]_{A\oplus A^*}$  are defined by \eqref{Manin-1} and \eqref{Manin-2}, respectively.    
	\end{rmk}
	
	\section{Closing discussions} \label{sec:7}
	
	We close this paper by proposing some questions to be considered in the future projects:
	\begin{enumerate}
		\item In \cite{G. Liu}, bialgebra theory for transposed Poisson algebras uses bialgebra structures related to Manin triples of Lie algebras with respect to commutative 2-cocycles, which are closely linked to anti-pre-Lie algebras. Can we define (transposed) Poisson $3$-Lie bialgebras using the same approach?  
		\item Poisson bialgebras can be constructed from a combination of the classical Yang-Baxter equation and the associative Yang-Baxter equation. Can we construct Poisson 3-Lie bialgebras from the 3-Lie classical Yang-Baxter equation given in \cite{Bai2019}?  
		\item What is the bialgebra theory for (transposed) Poisson $n$-Lie $(n>3)$ algebras? Can we derive (transposed) Poisson $n$-Lie $(n\geq 3)$ algebras from (transposed) Poisson bialgebras as described in \cite{X. Ni} and \cite{G. Liu}?  
	\end{enumerate}

	\noindent{\bf Acknowledgements:}
	This work is supported by the Natural Science Foundation of Zhejiang Province (Grant No. LZ25A010002).

\end{document}